\tikzset{
math to/.tip={Glyph[glyph math command=rightarrow]},
loop/.tip={Glyph[glyph math command=looparrowleft, swap]},
loop'/.tip={Glyph[glyph math command=looparrowleft]},
 weird/.tip={Glyph[glyph math command=Rrightarrow, glyph length=1.5ex]},
  pi/.tip={Glyph[glyph math command=pi, glyph length=1.5ex, glyph axis=0pt]},
}
\newcommand{\showcomments}{yes}
\renewcommand{\showcomments}{no}
\newcommand{\hidetodo}[1]
{\ifthenelse{\equal{\showcomments}{yes}}%
{#1}
}
\newsavebox{\commentbox}
\newenvironment{com}%
{\ifthenelse{\equal{\showcomments}{yes}}%
{\footnotemark
        \begin{lrbox}{\commentbox}
        \begin{minipage}[t]{1.25in}\raggedright\sffamily\tiny
        \footnotemark[\arabic{footnote}]}
{\begin{lrbox}{\commentbox}}}%
{\ifthenelse{\equal{\showcomments}{yes}}%
{\end{minipage}\end{lrbox}\marginpar{\usebox{\commentbox}}}
{\end{lrbox}}}
\newtheorem{thm}{Theorem}[section]
\newtheorem{lem}[thm]{Lemma}
\newtheorem{conj}[thm]{Conjecture}
\newtheorem{conje}[thm]{Conjecture}
\theoremstyle{definition}
\newtheorem{defn}[thm]{Definition}
\newtheorem{rem}[thm]{Remark}
\newtheorem{prob}[thm]{Problem}
\DeclareMathOperator{\image}{image}
\DeclareMathOperator{\rim}{Rim}
\DeclareMathOperator{\length}{Length}
\DeclareMathOperator{\rank}{rank}
\DeclareMathOperator{\BS}{BS}
\DeclareMathOperator{\diam}{Diam}
\DeclareMathOperator{\dist}{Dist}
\DeclareMathOperator{\height}{Height}
\newcommand{\semidirect}{\ensuremath{\rtimes}}
\newcommand{\field}[1]{\mathbb{#1}}
\newcommand{\integers}{\ensuremath{\field{Z}}}
\newcommand{\euler}{\chi}
\begin{document}

\title[Negative Immersions and Finite Height Mappings]{Negative Immersions and Finite Height Mappings}

\author[B.~Abdenbi]{Brahim Abdenbi}
\email{brahim.abdenbi@mail.mcgill.ca  \, \, \, wise@math.mcgill.ca}
\author[D.~T.~Wise]{Daniel T. Wise}
          \address{Dept. of Math. \& Stats.\\
                    McGill Univ. \\
                    Montreal, Quebec, Canada H3A 0B9}
          \subjclass[2020]{20F65,  20F67}
\keywords{Negative Immersions, Height, Coherence, Fully Irreducible Maps.}
\thanks{Research supported by NSERC}
\date{\today}

\maketitle

\begin{com}
{\bf \normalsize COMMENTS\\}
ARE\\
SHOWING!\\
\end{com}

\begin{abstract}
Given a monomorphism $\Psi:\mathcal{H}\rightarrow \mathcal{F}$ where $\mathcal{H}$ is a proper free factor of the free group $\mathcal{F}$, we show the associated mapping torus $X$ of $\Psi$ has negative immersions iff $\mathcal{H}$ has finite height in $\pi_1X$ iff $\Psi$ is fully irreducible. We survey related properties and discuss possible directions to pursue further.
 \end{abstract}

\section{Introduction}

 Free-by-cyclic groups have been of continual interest in combinatorial and geometric group theory especially because of works of G.~Baumslag \cite{MR316570,MR1243634}, Bestvina-Feighn-Handel 
\cite{BestvinaHandel92, BestvinaFeighnHandel2000}, and have an increasing interest because of virtual algebraic fibering \cite{Kielak2020}. Among their most notable properties is that they 
are \emph{coherent} which means that every finitely generated subgroup is finitely presented \cite{FeighnHandelCoherence}. A free-by-cyclic group $\mathcal{G}=\mathcal{F}\semidirect_\Psi \integers$ is often studied by thinking of $\mathcal{F}$ as $\pi_1F$ for some based graph $F$, and representing $\Psi$ by a basepoint preserving map $F\rightarrow F$, and regarding $\mathcal{G}$ as  $\pi_1X$ where $X$ is the mapping torus corresponding to $\Psi$. 

The study of coherent groups led to the following notion:
a $2$-complex $X$ has \textit{nonpositive immersions} if for any combinatorial immersion $Y\rightarrow X$, with $Y$ compact, connected, and collapsed (meaning $Y$ has no free faces), either $\pi_1Y$ is trivial or $\chi\left(Y\right)\leq 0$. A  straightforward proof was given in \cite{WiseNonPositiveCoherence} showing that when  $X$ is the mapping torus of a $\pi_1$-injective map $\psi:F\rightarrow F$ of a graph, then $X$ has nonpositive immersions. Recent progress has been made on the conjecture that nonpositive immersions implies coherence of $\pi_1X$ \cite{Jaikin-Linton23}.

The following stricter form of nonpositive immersions was suggested in \cite{WiseSectional02}:
A $2$-complex $X$ has \textit{negative immersions} if there exists $c>0$ such that for any combinatorial immersion $Y\rightarrow X$, where $Y$ is compact, connected, collapsed, and with no isolated edges, either $\pi_1Y$ is trivial or $\chi\left(Y\right)\leq -c|Y|_2$ where $|Y|_2$ is the number of $2$-cells in $Y$.
This stronger form permits a geometric proof of coherence, and also provides a proof of local-quasiconvexity when $X$ is a negatively curved 2-complex, (and conjecturally in general).
We refer to \cite{WiseSectional02, LouderWiltonNegative2018, WiseNonPositiveCoherence, MR4386825} for more on negative immersions.

In view of the nonpositive immersion property for the mapping torus $X=M(\psi)$ of a $\pi_1$-injective endomorphism $\psi:F\rightarrow F$,
we were motivated to explore the negative immersion property for a ``partial mapping torus''
$X=M(\psi)$ where $F$ is a finite graph, and where $\psi:H\rightarrow F$ is a $\pi_1$-injective map from a subgraph $H\subset F$ to $F$.

Such partial mapping tori arise explicitly in the work of Feighn-Handel. Indeed,
they show that all f.g.\ subgroups of a Free-by-Cyclic group (or indeed, ascending HNN extension of a free group) arise as $\pi_1X$ where $X=M(\psi)$ is a partial mapping torus.

In this case $\mathcal{G}=\pi_1X$ is an HNN extension $\mathcal{F}*_{\mathcal{H}^t=\mathcal{K}}$, where $\mathcal{F}=\pi_1F$ and $\mathcal{H}=\pi_1H$. Note that when $H=F$ one obtains an ascending HNN extension - and a (f.g.\ Free)-by-Cyclic group when $\psi$ is $\pi_1$-surjective.
But our interest is the case where $H$ is a proper free factor, in which case $\mathcal{G}$ is deficient and indeed $\euler(\mathcal{G})<0$.

We began our investigation by imposing a high level of ``malnormality'' on the partial endomorphism and proved:
\begin{thm}\label{ncor:nthm1}Let $F$ be a finite graph and let $H\subset F$ be a subgraph. Let $X$ be the mapping torus of a cellular immersion $\psi:H\rightarrow F$.  Suppose $\psi^{-1}\left(H\right)$ is homeomorphic to a forest. Then $X$ has negative immersions.
\end{thm}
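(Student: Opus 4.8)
\emph{Proof proposal.} The plan is to combine the combinatorial structure of $X=M(\psi)$ with a Gauss--Bonnet (``negative sectional curvature'') estimate, the forest hypothesis supplying the strict negativity. First record the cell structure: the $1$-skeleton of $X$ is $F$ together with one ``vertical'' $1$-cell $t_v$ for each vertex $v$ of $H$ (running from $v$ to $\psi(v)$), and $X$ has one $2$-cell $R_e$ for each edge $e$ of $H$, attached along a path reading $e$, then a vertical edge, then the reverse of the edge-path $\psi(e)$, then a vertical edge. Thus every $2$-cell is a polygon with exactly two vertical sides, one ``bottom'' side lying over $H$, and a ``top'' sub-path over $F$ of some length $\ell_e\ge 1$; in particular $\chi(X)=\chi(F)-\chi(H)$ and $\pi_1X=\mathcal F*_{\mathcal H^{t}=\mathcal K}$ with $\mathcal K:=\psi_*\mathcal H$. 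I would also note at the outset that, via Stallings' description of fiber products of graph immersions, the hypothesis that $\psi^{-1}(H)$ is a forest is equivalent to the strong malnormality statement $\mathcal H\cap g\mathcal Kg^{-1}=1$ for all $g\in\mathcal F$, and that it passes to iterates: each $\psi^{-n}(H)\subseteq\psi^{-1}(H)$ is again a forest.

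Next, let $Y\to X$ be a combinatorial immersion with $Y$ compact, connected, collapsed, without isolated edges, and $\pi_1Y\ne 1$. After the standard reductions coming from collapsedness and the no-isolated-edges hypothesis (every $1$-cell meets the boundaries of at least two $2$-cells, every vertex has valence $\ge 2$, no free faces, and bare pieces carrying no $2$-cells only decrease $\chi$), I would use the circle-valued map $X\to S^1$ that collapses $F$ and sends each $t_v$ once around. A generic fiber of this map is a copy of $H$ (it meets each $t_v$ once, crosses each $R_e$ once, and the incidences match those of $H$), so its preimage in $Y$ is a graph $\Gamma$ which immerses in $H$ and has exactly $|Y|_2$ edges; moreover cutting $Y$ along the bicollared graph $\Gamma$ yields a complex that collapses onto the ``horizontal graph'' $\Lambda\subseteq Y$, namely the preimage of $F$, which immerses in $F$. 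Counting cells (equivalently, cut-and-collapse) gives the identity $\chi(Y)=\chi(\Lambda)-\chi(\Gamma)$; in particular one recovers $\chi(Y)\le 0$, and the theorem is reduced to the quantitative bound $\chi(\Lambda)-\chi(\Gamma)\le -c\,|Y|_2$.

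The heart of the proof is to charge every $2$-cell of $Y$ a definite amount of negative Euler characteristic, and here the forest hypothesis enters. Its simplest manifestation: $Y$ cannot be ``flat'' --- were $\chi(Y)$ to vanish with the rectangles tiling a closed surface, that surface would be a torus or Klein bottle tiled in ``rows'' of $2$-cells with the vertical direction along the $t$-circle, the bottom circle $\beta$ of a row being a reduced closed path of $H$; since the top of that row is the bottom of the next it also lies in $H$, so $\beta\subseteq\psi^{-1}(H)$, impossible as $\psi^{-1}(H)$ is a forest. To upgrade this to a uniform estimate I would equip each $2$-cell $R_e$ of $Y$ with corner angles --- roughly $\pi/2$ at the four ``corners of the rectangle'' and $\pi$ at the $\ell_e-1$ corners along the top --- perturbed so that every $2$-cell has combinatorial curvature at most $-\epsilon<0$ for a uniform $\epsilon=\epsilon(F,H,\psi)$, verify the accompanying link condition (no essential cycle of angle $<2\pi$ in any $\link u$ for any immersion into $X$), and conclude via combinatorial Gauss--Bonnet that $2\pi\chi(Y)=\sum_R\kappa(R)+\sum_u\kappa(u)\le\sum_R\kappa(R)\le-\epsilon\,|Y|_2$. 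The delicate point --- and the step I expect to be the main obstacle --- is that verifying the link condition and the uniformity of $\epsilon$ requires controlling how vertical stacks of $2$-cells propagate: any violating (locally flat) configuration again forces a reduced closed path of $H$ to sit inside some iterated preimage $\psi^{-n}(H)$, and the finiteness of $\psi^{-1}(H)$ bounds how many vertical steps this can survive, which is precisely what turns ``no infinite flat'' into a uniform constant $c$ depending only on $|F|$, $|H|$ and $\max_e\ell_e$. One may alternatively phrase the curvature input as the assertion that $X$ carries a negative sectional curvature structure in the sense suggested in \cite{WiseSectional02} and invoke that this implies negative immersions.
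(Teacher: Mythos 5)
Your first two paragraphs set up the same decomposition the paper uses: what you call $\Lambda$ is the union of vertex-spaces $Y_v$ (preimage of $F$), what you call $\Gamma$ is the union of edge-spaces $Y_e$ (a generic fiber), and your identity $\chi(Y)=\chi(\Lambda)-\chi(\Gamma)$ is the paper's $\chi(Y)=\chi(Y_v)-\chi(Y_e)$. The divergence, and the gap, is in the third paragraph. The paper never introduces a curvature structure: instead it notes that the map $\Psi\colon Y_e\to Y_v$ pulls back from $\psi$, so $\Psi^{-1}(Y_e)$ inherits the forest hypothesis from $\psi^{-1}(H)$ and has components of diameter bounded by a constant $d$ depending only on $\psi$; since $Y_e$ has no tree components, each such component reaches $\partial Y_e$, hence every edge of $Y_e$ lies in a $d$-ball around $\partial Y_e$ and $|Y_e|_1\le M\,|\partial Y_e|_0$. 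Combining this with the elementary inequality $\chi(Y_v)-\chi(Y_e)\le -\tfrac12|\partial Y_e|_0$ (which needs $Y_v$ leafless, cf.\ Remark~\ref{rem:common-claims}) yields $\chi(Y)\le -\tfrac{1}{2M}|Y|_2$ directly, with no angle assignment or link check.

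Your Gauss--Bonnet route, by contrast, does not go through as written. The natural angle assignment you describe makes each rectangle exactly flat, and your proposed remedy is to ``perturb'' the angles downward; but lowering corner angles lowers the total angle of every cycle in every link, and you give no argument that the link condition survives the perturbation. Your heuristic --- that a violating locally flat configuration would force a reduced closed path of $H$ inside some $\psi^{-n}(H)$ --- is not substantiated: a short essential link cycle is a local obstruction at a single vertex, whereas the forest hypothesis is a global condition on preimages, and there is no evident way to extract a global closed path of $H$ from a single bad link. You even flag this verification as ``the main obstacle,'' which is an accurate self-assessment: it is precisely the missing step. Finally, the fallback sentence that a negative sectional curvature structure ``implies negative immersions'' has the implication reversed; as the paper's Section~\ref{sec:related properties} records, negative immersions is the \emph{stronger} of the two properties, so exhibiting negative sectional curvature would not suffice for this theorem.
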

The hypothesis of Theorem~\ref{ncor:nthm1} has a malnormal flavor insofar as it asserts that $\psi_*(\mathcal{H})$ has trivial intersection with conjugates of $\mathcal{H}$.
Theorem~\ref{ncor:nthm1} is proven in the text as Theorem~\ref{thm:malnormalhnn}.
Its generalizations have somewhat more challenging proofs, but more striking statements which we now turn to:

We first state the generalization parallel to the statement of Theorem~\ref{ncor:nthm1}:
\begin{thm}\label{thm: preimages}
Let $F$ be a finite graph and let $H\subset F$ be a subgraph. Let $X$ be the mapping torus of a cellular immersion $\psi:H\rightarrow F$.  
Then $X$ has negative immersions if and only if
 $\psi^{-n}\left(H\right)$ is a forest for some $n\geq 0$.
 \end{thm}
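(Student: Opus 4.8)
\emph{Strategy and reduction.} The hypothesis stabilizes, and this is what reduces the statement to one about a single subgraph. Since $\psi^{-1}(H)\subseteq H$ and a subgraph of a forest is a forest, the subgraphs $H\supseteq\psi^{-1}(H)\supseteq\psi^{-2}(H)\supseteq\cdots$ descend and hence stabilize, inside the finite graph $F$, to a subgraph $H_\infty=\psi^{-N}(H)$ for some $N\ge 0$; thus ``$\psi^{-n}(H)$ is a forest for some $n$'' is equivalent to ``$H_\infty$ is a forest''. As $\psi^{-1}(H_\infty)=H_\infty$, the map $\psi$ restricts to a cellular immersion $H_\infty\to H_\infty$, and it carries every circuit of $H_\infty$ to an immersed loop whose image has no valence-one vertex, so $\psi$ preserves $\mathrm{core}(H_\infty)$ and preserves the forward-stable subgraph $Z_\infty:=\bigcap_k\psi^k(H_\infty)$, on which it acts by a surjective cellular immersion of a finite graph, i.e.\ by a finite covering map. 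It therefore suffices to prove that $X$ has negative immersions if and only if $H_\infty$ is a forest.

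\emph{If $H_\infty$ is not a forest, $X$ fails to have negative immersions.} Because $\psi|_{H_\infty}$ is $\pi_1$-injective, $Z_\infty$ is again not a forest; and since $\psi\colon Z_\infty\to Z_\infty$ is a covering, its preimage of the core equals the core, so it restricts to a covering $Z\to Z$ of $Z:=\mathrm{core}(Z_\infty)$, a nonempty non-forest graph with no valence-one vertex, and after replacing $Z$ by the union of a $\psi$-orbit of components meeting a non-forest component we may take $Z$ connected with $\psi(Z)=Z$. As $Z\subseteq H_\infty\subseteq H$ and $\psi(Z)\subseteq Z$, the subcomplex of $X$ spanned by $Z\subseteq F$ together with the strips over the edges of $Z$ is precisely the mapping torus $M(\psi|_Z)$, and its inclusion into $X$ is a combinatorial immersion. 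This $M(\psi|_Z)$ is connected, carries at least one $2$-cell, has nontrivial $\pi_1$ (it surjects onto $\integers$ and contains $\pi_1Z\neq 1$), has $\chi\big(M(\psi|_Z)\big)=0$, and—since $\psi|_Z$ is onto on edges and $Z$ has no valence-one vertex—has no free faces and no isolated edges. A single immersion with these features contradicts negative immersions (take $c=1$), so $X$ does not have negative immersions.

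\emph{If $H_\infty$ is a forest, $X$ has negative immersions.} This is the quantitative refinement of Theorem~\ref{ncor:nthm1}, which is exactly the case $N\le 1$ (where $\psi^{-1}(H)=H_\infty$ is a forest). For $N\ge 2$ I would induct on $N$: set $H_1:=\psi^{-1}(H)$ and note that $\psi|_{H_1}\colon H_1\to F$ has stabilization subgraph $\bigcap_n(\psi|_{H_1})^{-n}(H_1)=\bigcap_n\psi^{-(n+1)}(H)=H_\infty$ but stabilization index $N-1$, so by induction $M(\psi|_{H_1})$ has negative immersions. One then analyzes an immersion $Y\to X$ with $Y$ compact, connected, collapsed, with no isolated edges and $\pi_1Y\neq 1$: using the HNN splitting $\pi_1X=\pi_1F\,*_{\pi_1H\,=\,\psi_*\pi_1H}$, the immersion pulls back a graph-of-spaces decomposition of $Y$, namely a graph $Y_F$ immersing into $F$ (the preimage of $F\subseteq X$) together with vertical strip-complexes $S_j\times[0,1]$, the $S_j$ immersing into $H$, whose bottoms and tops are immersed subgraphs of $Y_F$ landing in $H$ and in $\psi(S_j)$ respectively, and one has $\chi(Y)=\chi(Y_F)-\sum_j\chi(S_j)$ and $|Y|_2=\sum_j|E(S_j)|$. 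The point of the forest hypothesis is that the squares over $E(H)\setminus E(H_1)$ have their tops lying over $F\setminus H$—edges that receive no strip-bottom—so these strips sit at the tops of the ``stacking order'', and more generally, whenever one could follow strips upward from a circuit of $Y_F$ for $N+1$ rounds (each round the top of a strip again lying over a circuit covered by strip-bottoms), the resulting immersed loop of $F$ would stay in $H$ through $N+1$ iterates of $\psi$ and so represent a circuit of $\psi^{-N}(H)=H_\infty$, which is impossible. Hence stacks of strips have length bounded in terms of $N$, and this is what prevents $\chi(Y)/|Y|_2$ from approaching $0$; it is convenient to know beforehand (by the argument of \cite{WiseNonPositiveCoherence}) that $X$ already has nonpositive immersions, so that only the borderline case $\chi(Y)\approx 0$ must be excluded.

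\emph{Main obstacle.} The hard step is converting ``no circuit survives $N+1$ iterates of $\psi$'' into a uniform constant $c=c(F,H,\psi)$ with $\chi(Y)\le -c|Y|_2$. One must track the interaction of $Y_F$ with the edge graphs $S_j$ across the non-injective, non-surjective map $\psi$ iterated up to $N$ times—a single square of $Y$ may simultaneously be (a copy of) the top of one strip and the bottom of another, and the ``dead-end'' strips over $E(H)\setminus E(H_1)$ are not literally innocuous since they can still carry essential loops—so one cannot merely collapse them without losing the $2$-cell count; the bookkeeping has to extract a definite amount of negative Euler characteristic at each level, using collapsedness of $Y$ essentially. (An alternative is a compactness argument: from a hypothetical sequence $Y_i\to X$ with $\chi(Y_i)/|Y_i|_2\to 0$ one extracts a limiting infinite stack of strips over circuits of $H$, hence a loop surviving all iterates of $\psi$ and so lying in $H_\infty$—contradicting that $H_\infty$ is a forest. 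This also matches the height formulation of the abstract: a circuit of $H_\infty$ represents a nontrivial element of $t^{k}\pi_1H\,t^{-k}$ for every $k$, so $H_\infty$ non-forest forces infinite height of $\pi_1H$ in $\pi_1X$, while a forest $H_\infty$ caps the height near $N+1$.)
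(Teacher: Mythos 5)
Your ``only if'' half is sound and essentially reproduces the paper's Lemma~\ref{lem:infinite height give zer euler}: if the $\psi^{-n}(H)$ never become a forest, the (combinatorial) domains stabilize to a $\psi$-invariant non-tree subgraph, its core is likewise $\psi$-invariant, and the mapping torus of $\psi$ restricted to that core is a compact, collapsed subcomplex of $X$ with $\chi=0$, no isolated edges, and nontrivial $\pi_1$, killing negative immersions. (One caveat: $\psi^{-n}(H)$ is a subset of $H$ but not in general a subgraph, since $\psi$ is only cellular; the paper handles this via the distinction between $Z_i$ and its combinatorial domain $D_i$ in Definition~\ref{defn:combinatorial}, which you gloss over but which doesn't break your argument.)

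The ``if'' direction, however, has a genuine gap that you yourself flag in your ``Main obstacle'' paragraph: you articulate the right intuition --- stacks of strips over circuits cannot persist for $N+1$ levels because such a stack would exhibit a circuit in $\psi^{-N}(H)$, a forest --- but you do not convert that into the required inequality $\chi(Y)\le -c\,|Y|_2$, and you acknowledge you don't see how to. This is exactly where the paper introduces its fan/ladder machinery (Definition~\ref{defn:ladders and fans} and Lemma~\ref{lem:fans}). The key lemma is \emph{not} an induction on $N$ nor a limiting/compactness argument; it is a direct combinatorial bound: setting $m=m(\psi)$ to be the maximal length of a finite fan in $X$ (finite because $D_\infty$ is a forest, Lemma~\ref{lem:nested domains}), every $2$-cell of $Y$ lies in an immersed ladder of length $\le m+1$ whose initial edge is within a uniformly bounded distance $M$ of a boundary vertex of an outgoing edge-space. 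One then counts: there are at most $N\cdot\lVert\psi\rVert^m\cdot(m+1)$ $2$-cells attributable to each boundary vertex, giving $|Y|_2 \le (m+1)\lVert\psi\rVert^m N\,|\partial Y|_0$, while Lemma~\ref{lem:209} applied to each vertex-space gives $\chi(Y)\le -\tfrac12|\partial Y|_0$; combining yields the constant $c=\tfrac{1}{2(m+1)\lVert\psi\rVert^m N}$. Your suggested compactness route is unconvincing as stated: a sequence $Y_i$ with $\chi(Y_i)/|Y_i|_2\to 0$ need not yield a single arbitrarily tall stack of strips (e.g.\ $\chi(Y_i)$ could stay bounded while $|Y_i|_2\to\infty$ via many disjoint short stacks), so extracting an ``infinite stack'' is not automatic --- that is precisely what the ladder bound accomplishes. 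Your inductive scheme on $N$ would also need a mechanism to relate immersions $Y\to M(\psi)$ to immersions into $M(\psi|_{H_1})$, which you do not supply.
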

Here $\psi^{-i}=\left(\psi^{i}\right)^{-1}$ whenever the \textit{partial composition} $\psi^{i}$ is defined. This led us to the notion of ``finite directed height'' (see  Definition~\ref{defn:composition} and Definition~\ref{defn:combinatorial}) which turns out to be equivalent to finite height \cite{GMRS98}, which we now recall:

 Given a subgroup $\mathcal{H}$ of $\mathcal{G}$, the \textit{height} of $\mathcal{H}$ in $\mathcal{G}$, denoted by  $\height\left(\mathcal{H}\right)$, is the supremal number of distinct cosets $\left\{\mathcal{H}g_i\right\}_{i\in I}$ such that $\displaystyle\bigcap_{i\in I} \mathcal{H}^{g_i}$ is infinite. \textcolor{black}{In particular, the height of $\mathcal{H}$ in $\mathcal{G}$ is $1$ precisely when $\mathcal{H}$ is a malnormal subgroup.}

Our main result about partial mapping tori relates height and negative immersions as follows:
\begin{thm}\label{thm:main1thm}
    Let $\Psi: \mathcal{H}\rightarrow \mathcal{F}$ be a monomorphism, where $\mathcal{H}$ is a proper free factor of a free group $\mathcal{F}$. Then the mapping torus $X$ of $\Psi$ has negative immersions if and only if $\height\left(\mathcal{H}\right)<\infty$ in $\pi_1X$.
\end{thm}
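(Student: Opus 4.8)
The plan is to run $\Psi$ through the combinatorial criterion of Theorem~\ref{thm: preimages} and then match its ``forest'' condition with finiteness of height. First I would fix a geometric model. Since $\mathcal{H}$ is a proper free factor of $\mathcal{F}$, pick a free basis of $\mathcal{F}$ adapted to a free factorization $\mathcal{F}=\mathcal{H}*\mathcal{L}$; the corresponding rose $F$ has $\pi_1F\cong\mathcal{F}$ and contains the sub-rose $H$ with $\pi_1H\cong\mathcal{H}$ realizing the inclusion of the free factor, and conversely every connected subgraph of a finite graph carries a free factor of its fundamental group, so this is precisely the setting of Theorem~\ref{thm: preimages}. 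Representing $\Psi$ by a cellular map $H\to F$ and applying a subdivision-and-folding argument (as in Feighn--Handel's realization of finitely generated subgroups by partial mapping tori), one obtains, possibly after enlarging $F$, a finite graph $F$, a connected subgraph $H\subseteq F$, and a cellular immersion $\psi\colon H\to F$ with $\psi_*=\Psi$; then $X:=M(\psi)$ has $\pi_1X\cong\mathcal{F}*_{\mathcal{H}^{t}=\Psi(\mathcal{H})}$, the mapping torus of $\Psi$. (Independence of the negative immersion property from the chosen model will fall out a posteriori, since height is intrinsic to $\pi_1X$.) By Theorem~\ref{thm: preimages}, $X$ has negative immersions if and only if $\psi^{-n}(H)$ is a forest for some $n\ge 0$.

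Next I would observe that $H\supseteq\psi^{-1}(H)\supseteq\psi^{-2}(H)\supseteq\cdots$ is a descending chain of subgraphs of the finite graph $H$, hence stabilizes to $K_\infty:=\bigcap_n\psi^{-n}(H)$; since every subgraph of a forest is a forest, ``$\psi^{-n}(H)$ is a forest for some $n$'' is equivalent to ``$K_\infty$ is a forest'', i.e.\ to $\mathcal{H}$ having finite directed height under $\Psi$ in the sense of Definition~\ref{defn:combinatorial}. Thus the whole theorem reduces to the single equivalence: $K_\infty$ is a forest if and only if $\height(\mathcal{H})<\infty$ in $\mathcal{G}:=\pi_1X$.

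For the forward direction of this equivalence I would argue as follows. If $K_\infty$ contains an essential loop $\gamma$ representing $h\in\mathcal{H}\setminus\{1\}$, then $\gamma\subseteq\psi^{-n}(H)$ for every $n$, so each partial iterate $\psi^{j}(\gamma)$ is an immersed (hence tight) loop lying in the core $H$, and therefore the partial iterate $\Psi^{j}(h)\in\mathcal{H}$ is defined for all $j\ge 0$. From the HNN relation $t^{-1}ht=\Psi(h)$ one gets $t^{-n}ht^{n}=\Psi^{n}(h)\in\mathcal{H}$, that is $h\in\bigcap_{n\ge0}\mathcal{H}^{\,t^{-n}}$; this intersection is infinite since $h$ has infinite order, and the cosets $\mathcal{H}t^{-n}$ are pairwise distinct by Britton's Lemma, so $\height(\mathcal{H})=\infty$. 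Conversely, given infinitely many distinct cosets $\{\mathcal{H}g_i\}$ with $\bigcap_i\mathcal{H}^{g_i}$ infinite, I would pick an infinite-order $z$ in that intersection, note it is elliptic, and study its fixed subtree $T^{z}$ in the Bass--Serre tree of $\mathcal{G}$ (vertex stabilizers the conjugates of $\mathcal{F}$, edge stabilizers the conjugates of $\Psi(\mathcal{H})$). The possibility that $\{g_i\}$ meets only finitely many cosets of $\mathcal{F}$ is excluded by a pigeonhole argument, since it would make $\mathcal{H}$ have infinite height in $\mathcal{F}$, contradicting the finiteness of height of finitely generated (hence quasiconvex) subgroups of free (hence hyperbolic) groups \cite{GMRS98}; hence $T^{z}$ is infinite, and reading along a geodesic ray of $T^{z}$ --- after conjugating so that the ray points in the pure $t$-direction --- exhibits $z\in\bigcap_{n\ge0}t^{n}\mathcal{H}t^{-n}$, so $\Psi^{n}(z)\in\mathcal{H}$ for all $n$, an essential loop for $z$ survives in every $\psi^{-n}(H)$, and $K_\infty$ is not a forest.

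The main obstacle is this converse direction: converting an \emph{arbitrary} infinite intersection of conjugates of $\mathcal{H}$ into the rigid ``directed'' configuration governed by the powers of the stable letter, i.e.\ straightening $T^{z}$ so that a fixed ray is a translate of the pure $t$-direction. This is exactly where the HNN structure and the free-factor hypothesis are genuinely used, and where the machinery of Definitions~\ref{defn:composition} and~\ref{defn:combinatorial} and the surrounding lemmas does its work; everything else is standard folding, Britton normal form, and a single invocation of Theorem~\ref{thm: preimages}. Chaining the three equivalences then yields Theorem~\ref{thm:main1thm}.
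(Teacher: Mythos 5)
Your overall plan -- reduce via Theorem~\ref{thm: preimages} to the statement that $\psi^{-n}(H)$ eventually stabilizes to a forest, and then match that condition with finiteness of height -- is exactly the route the paper takes (combining Theorem~\ref{thm:main coherence theorem} with Lemma~\ref{lem:heights are the same}). Your forward implication (a loop persisting in $K_\infty$ forces $h\in\bigcap_{n\ge0}\mathcal{H}^{\,t^{-n}}$ with pairwise distinct cosets, hence infinite height) is correct in substance, modulo the usual basepoint bookkeeping when passing from loops $\psi^{j}(\gamma)\subset H$ to conjugates of $\Psi^{j}(h)$, which one handles with connecting paths and the retraction $\pi_1X\to\mathbb{Z}$ killing $\mathcal{F}$.

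The converse implication, however, is exactly where you stop short, and the outline you sketch has two concrete problems. First, ``$\height(\mathcal{H})=\infty$'' is a supremum: it gives, for every $n$, a set of $n$ distinct cosets with infinite intersection, but the witnessing element and the cosets may change with $n$. You cannot simply ``pick an infinite-order $z$'' lying in infinitely many conjugates without an extra argument, and none is supplied. Second, even granting such a $z$, there is no reason a geodesic ray of the fixed tree $T^{z}$ should be a translate of the pure $t$-ray; in a directed Bass--Serre tree, an edge-path with common infinite stabilizer can zig-zag, and ``straightening'' it is not automatic. The paper avoids both issues by proving the contrapositive with a quantitative bound (Lemma~\ref{lem:heights are the same}): a subtree $T'$ of the Bass--Serre tree with infinite pointwise stabilizer has \emph{directed} depth at most $\overrightarrow{\height}(\psi)$, each vertex has at most one outgoing $T'$-edge because the free-factor $\mathcal{H}$ is malnormal in $\mathcal{F}$, and each vertex has at most $r=\height_{\mathcal{F}}(\Psi(\mathcal{H}))$ incoming $T'$-edges because finitely generated subgroups of free groups have finite height \cite{GMRS98}. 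Hence $T'$ has at most $r^{\overrightarrow{\height}(\psi)}$ edges, giving the explicit bound $\height(\mathcal{H})\le r^{\overrightarrow{\height}(\psi)}<\infty$. This is the argument your sketch gestures at but does not carry out; without it, the equivalence is not established.
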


$\height(H)=\infty$ corresponds to nontrivial intersections of the form $H^{t^n}\cap g_nHg_n^{-1}$
for each $n\geq 1$ and some element $g_n\in F$.
Hence arbitrarily long directed segments in the Bass-Serre tree with infinite stabilizer. However, a much stronger failure of finite height occurs, in the form of a self-conjugation.

Thus negative immersions can be re-interpreted in an attractive way using a natural generalization of fully irreducible partial endomorphisms. A partial endomorphism $\Psi: \mathcal{H}\rightarrow \mathcal{F}$ is \textit{fully irreducible} if there does not exist $n>0$, a proper free factor $\mathcal{H}'\subset \mathcal{H}$, and $g\in \mathcal{F}$ such that $\Psi^n\left(\mathcal{H}'\right)\subset g^{-1}\mathcal{H}'g$. 
The standard notion of fully irreducible endomorphism focuses on the case where $\mathcal{H}=\mathcal{F}$ \cite{MR1147956}. Using this language, we show the following statement proved in the text as Theorem~\ref{thm:irreducible is finite height}:
\begin{thm}\label{thm:irreducible is finite height}
Let $\mathcal{H}$ be a proper free factor of a finitely generated free group $\mathcal{F}$, and let $\Psi:\mathcal{H}\rightarrow \mathcal{F}$ be a monomorphism. Let $X$ be the standard $2$-complex of the HNN extension of $\mathcal{F}$ with respect to $\Psi$. Then $X$ has negative immersions if and only if $\Psi$ is fully irreducible.  
\end{thm}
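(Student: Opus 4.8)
The plan is to combine Theorem~\ref{thm:main1thm} with a combinatorial identification of ``finite height'' with ``fully irreducible.'' Fix a cellular immersion $\psi\colon H\to F$ of finite connected graphs realizing $\Psi$, with $\mathcal H=\pi_1H$ a proper free factor of $\mathcal F=\pi_1F$; thus $\mathcal G:=\pi_1X$ is the HNN extension $\mathcal F\,*_{\mathcal H^{t}=\Psi(\mathcal H)}$ with $t^{-1}ht=\Psi(h)$ for $h\in\mathcal H$, and there is a retraction $\varrho\colon\mathcal G\to\integers$ killing $\mathcal F$ with $t\mapsto 1$. By Theorem~\ref{thm:main1thm}, $X$ has negative immersions iff $\height(\mathcal H)<\infty$ in $\mathcal G$, so it suffices to prove
\[
\Psi\ \text{fully irreducible}\quad\Longleftrightarrow\quad \height(\mathcal H)<\infty .
\]
I describe the argument for $\rank\mathcal H\ge 2$, the case of interest; the degenerate rank-one situation is handled separately, using the convention governing full irreducibility when $\mathcal H$ has no nontrivial proper free factors.

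\emph{Not fully irreducible $\Rightarrow$ infinite height.} Suppose there are $n>0$, a nontrivial proper free factor $\mathcal H'\subset\mathcal H$, and $g\in\mathcal F$ with $\Psi^n(\mathcal H')\subseteq g^{-1}\mathcal H'g$. Since $\mathcal H'$ lies in the domain of $\Psi^n$ we have $t^{-n}h t^{n}=\Psi^n(h)$ for all $h\in\mathcal H'$, so $t^{-n}\mathcal H't^{n}=\Psi^n(\mathcal H')$. Put $s:=gt^{-n}\in\mathcal G$. Then
\[
s\,\mathcal H'\,s^{-1}=g\,\Psi^n(\mathcal H')\,g^{-1}\subseteq g\,(g^{-1}\mathcal H'g)\,g^{-1}=\mathcal H',
\]
so iterating gives $\mathcal H'\subseteq s^{-i}\mathcal H's^{i}\subseteq s^{-i}\mathcal Hs^{i}=\mathcal H^{s^{i}}$ for every $i\ge 0$, hence $\mathcal H'\subseteq\bigcap_{i=0}^{m}\mathcal H^{s^{i}}$ for all $m$. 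The cosets $\mathcal Hs^{i}$ ($0\le i\le m$) are pairwise distinct: $\mathcal Hs^{i}=\mathcal Hs^{j}$ would put $s^{i-j}\in\mathcal H\subseteq\mathcal F$, contradicting $\varrho(s^{i-j})=-n(i-j)\neq 0$. As $\mathcal H'$ is infinite this shows $\height(\mathcal H)=\infty$.

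\emph{Infinite height $\Rightarrow$ not fully irreducible.} Assume $\height(\mathcal H)=\infty$. By Theorem~\ref{thm: preimages} applied to the mapping torus of $\psi$, together with Theorem~\ref{thm:main1thm}, the subgraph $\psi^{-k}(H)$ is not a forest for any $k\ge 0$. The subgraphs $D_k:=\psi^{-k}(H)=\operatorname{dom}(\psi^{k+1})$ satisfy $H=D_0\supseteq D_1\supseteq\cdots$ (if $\psi^{k+1}$ is defined at $x$ so is $\psi^{k}$), hence, being subgraphs of the finite graph $H$, they stabilize to a subgraph $D$ that is not a forest; moreover $\psi(D)\subseteq D$, since $x\in D=\bigcap_kD_k$ forces $\psi^{m}(\psi x)=\psi^{m+1}(x)\in H$ for all $m$. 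An immersion carries circuits to circuits, so $\psi$ permutes the non-forest components of $D$; choosing such a component $C$ with $\psi^{k_0}(C)\subseteq C$ for some $k_0\ge 1$ and setting $\phi:=\psi^{k_0}|_C\colon C\to C$ gives a self-immersion of a finite connected non-forest graph, with $\pi_1C$ a free factor of $\mathcal H$ (extend a spanning tree of $C$ to one of $H$). If $b_1(C)<\rank\mathcal H$ then $\mathcal H':=\pi_1C$ is a \emph{proper} free factor and $\phi$ exhibits $\Psi^{k_0}(\mathcal H')\subseteq g^{-1}\mathcal H'g$ for a basepoint-correction element $g$, so $\Psi$ is not fully irreducible. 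If $b_1(C)=\rank\mathcal H$, replace $C$ by the stable value $E$ of the descending chain $C\supseteq\phi(C)\supseteq\phi^{2}(C)\supseteq\cdots$, so that $\phi(E)=E$ and $\phi|_E$ is a surjective immersion, i.e.\ a degree-$d$ covering with $\chi(E)=d\,\chi(E)$. If $b_1(E)\le 1$ then $\pi_1E\cong\integers$ is a proper free factor of $\mathcal H$ (as $\rank\mathcal H\ge 2$) and $\Psi^{k_0}(\pi_1E)\subseteq g^{-1}(\pi_1E)g$; if $b_1(E)\ge 2$ then $\chi(E)\neq 0$ forces $d=1$, so $\phi|_E$ is a homeomorphism, hence of finite order, and a power $\phi^{M}$ restricts to the identity on $\operatorname{core}(E)=\operatorname{core}(H)$, thus fixes an embedded circle $D'\subset\operatorname{core}(H)$, yielding $\Psi^{k_0M}(\pi_1D')\subseteq g'^{-1}(\pi_1D')g'$ with $\pi_1D'\cong\integers$ a proper free factor of $\mathcal H$. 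In every case $\Psi$ is not fully irreducible.

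The hard part is the second implication: infinite height by itself only delivers arbitrarily long directed segments in the Bass--Serre tree with infinite edge-stabilizer, and one must upgrade this to an \emph{honest} self-conjugation into a \emph{proper} free factor. The mechanism is the two nested stabilization arguments — first on the domains $D_k$, then on the images $\phi^{j}(C)$ — inside the finite graph $H$, which turn ``longer and longer partial runs'' into a genuine self-immersion; the remaining subtlety is ruling out the possibility that this self-immersion only ever sees the top free factor $\mathcal H=\pi_1H$, which the Euler-characteristic constraint settles (it forces a finite-order graph symmetry, after which one descends to an embedded circle in the core). The rank-one edge case, where $\operatorname{core}(H)$ is a single circle admitting no proper non-forest subgraph, must be dispatched by hand.
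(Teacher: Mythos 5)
The paper's proof of Theorem~\ref{thm:irreducible is finite height} is essentially a pointer to Lemmas~\ref{lem:heights are the same}, \ref{lem:nested domains}, \ref{lem:infinite height give zer euler}, \ref{lem:fans} (together with Theorem~\ref{thm:main coherence theorem}), so your work of actually spelling out the translation between ``finite height'' and ``fully irreducible'' is genuinely useful. Your forward implication (not fully irreducible $\Rightarrow$ infinite height), using $s=gt^{-n}$ and the retraction $\varrho$ to separate cosets, is correct and is the right argument.

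The reverse implication has a concrete gap. You claim that once the descending chain $C\supseteq\phi(C)\supseteq\cdots$ stabilizes at $E$ with $\phi(E)=E$, the map $\phi|_E$ is ``a surjective immersion, i.e.\ a degree-$d$ covering.'' That identification is false: a surjective cellular immersion of a finite graph to itself need not be a covering. Take $E$ a wedge of two circles $a,b$ and let $\phi(a)=a$, $\phi(b)=bab$; this is a surjective cellular immersion (check links at the base vertex and at the two subdivision points of $b$), but the link map at each subdivision point has image of size $2$ inside a link of size $4$, so $\phi$ is not a covering, and $\phi_*$ has infinite-index image $\langle a,bab\rangle$. Worse, for $\phi(a)=ab$, $\phi(b)=ba$ the same phenomenon occurs and one can check (e.g.\ using the abelianization matrix $\left(\begin{smallmatrix}1&1\\1&1\end{smallmatrix}\right)$, with eigenvalues $0,2$, neither of absolute value $1$) that no conjugate of a rank-$1$ free factor of $\langle a,b\rangle$ is carried into a conjugate of itself by any power; so in this case your fallback ``find an embedded circle that is fixed'' has nothing to latch onto. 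Thus the case $b_1(E)=\rank\mathcal H$ is not resolved by your argument, and that is exactly the case the paper's Lemma~\ref{lem:infinite height give zer euler} leaves open: the $\psi$-invariant non-forest subgraph $D$ it produces can have $\pi_1D$ of full rank in $\mathcal H$.

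This points to a second, more structural issue: under a \emph{strict} reading of Definition~\ref{defn:connection to irreduciible} (``proper free factor $\mathcal H'\subsetneq\mathcal H$''), the statement itself is in trouble. Take $\mathcal H=\langle a,b\rangle$ a proper free factor of $\mathcal F=\langle a,b,c\rangle$ and let $\Psi$ be an iwip automorphism of $\mathcal H$ (say $a\mapsto ab$, $b\mapsto a$) followed by the inclusion $\mathcal H\hookrightarrow\mathcal F$. Then no proper free factor of $\mathcal H$ is conjugated into itself by any power of $\Psi$, yet $\psi(H)\subset H$ forces $\psi^{-i}(H)=H$ for all $i$, so $\overrightarrow{\height}(\psi)=\infty$ and $X$ fails negative immersions by Theorem~\ref{thm:main coherence theorem}. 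For the theorem to hold, ``proper'' must be read as ``nontrivial, and proper in $\mathcal F$'' (so $\mathcal H'=\mathcal H$ is permitted, since $\mathcal H$ is a proper free factor of $\mathcal F$). Under that reading your elaborate case distinction evaporates: the invariant non-forest subgraph $D\subset H$ from Lemma~\ref{lem:infinite height give zer euler} already furnishes the required witness $\mathcal H'=\pi_1D$, and there is no need to chase down a sub-$\mathcal H$ free factor, hence no need for the covering claim. So the gap in your proof lives precisely in the extra work you do to satisfy the stricter (and, it seems, unintended) reading of the definition.
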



\begin{prob}\label{Ask Mahan}
Let $\mathcal{H}$ be a proper free factor of a free group $\mathcal{F}$,
and let $\Psi:\mathcal{F}\rightarrow \mathcal{F}$ be a fully irreducible endomorphism. 
Is $\mathcal{H}$ quasiconvex in $\mathcal{G} = \mathcal{F}\semidirect_\Psi \integers$? 
Is every  non-quasiconvex finitely generated subgroup of $\mathcal{G}$ a virtual (generalized) fiber? Here ``generalized'' allows for a free subgroup that is conjugated properly into itself.
\end{prob}
An affirmative answer to Problem~\ref{Ask Mahan} would show that our groups are locally quasiconvex.

It is expected that when $X$ has negative immersions,
$\pi_1X$ is a locally quasiconvex hyperbolic group.
There is now some evidence for the following conjecture which we hope will inform the next entries into this topic:
\begin{conj}\label{conj:local quasiconvex characterization}
    Let $\mathcal{G}$ be a locally quasiconvex hyperbolic group.
    Then $\mathcal{G}$ has a finite index subgroup $\mathcal{G}'$ such that
    $\mathcal{G}'=\pi_1X$ where $X$ is a mapping torus of a fully irreducible partial endomorphism of a free group.
\end{conj}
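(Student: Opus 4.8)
Since the statement is a conjecture, I will outline a program and flag the point where new ideas seem to be required. By Theorem~\ref{thm:irreducible is finite height} the goal is equivalent to finding a finite-index subgroup $\mathcal{G}'\le\mathcal{G}$ that splits as an HNN extension $\mathcal{F}\ast_{\mathcal{H}^t=\mathcal{K}}$ with $\mathcal{F}$ free, $\mathcal{H}$ a free factor of $\mathcal{F}$, and $\height_{\mathcal{G}'}(\mathcal{H})<\infty$ --- equivalently, whose defining $2$-complex has negative immersions. So the plan has two halves: (i) produce \emph{some} virtual ``free-group HNN'' structure on $\mathcal{G}$; and (ii) arrange, after passing deeper, that the edge group has finite height in it.

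For (i) the natural tool is virtual algebraic fibering. When $\mathcal{G}$ is cubulable it is virtually compact special (Agol), hence virtually RFRS, so by \cite{Kielak2020} a finite-index subgroup $\mathcal{G}_1$ surjects onto $\integers$; if in addition the kernel can be taken free --- as happens, e.g., when $\mathcal{G}$ has cohomological dimension two, by the structure theory for such groups --- then $\mathcal{G}_1\cong\mathcal{F}\semidirect_\phi\integers$ is free-by-cyclic, the ``$\mathcal{H}=\mathcal{F}$'' instance of a partial mapping torus. Outside the cubulable finite-dimensional range one would instead extract a quasiconvex hierarchy of $\mathcal{G}$ terminating in free groups and read the desired partial mapping torus off the bottom stage; organizing such a hierarchy into a single HNN over a free factor after passing to finite index is already nontrivial, though I regard it as the softer of the two difficulties.

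The principal obstacle is (ii). A hyperbolic free-by-cyclic group $\mathcal{F}\semidirect_\phi\integers$ need not have $\phi$ fully irreducible: $\phi$ may be reducible yet still atoroidal, and this is not repaired by powers or by finite-index subgroups that preserve the splitting. The ``partial'' flexibility should rescue it --- by Feighn--Handel \cite{FeighnHandelCoherence} every finitely generated subgroup of $\mathcal{G}_1$ is $\pi_1$ of \emph{some} partial mapping torus $M(\psi)$, and inequivalent data $(F,H,\psi)$ give genuinely different $2$-complexes --- so one must \emph{select} data whose complex has negative immersions. I would attack this with relative train track representatives and the poset of $\phi$-invariant free factor systems (cf.\ \cite{BestvinaFeighnHandel2000}): peel off an invariant proper free factor on which the restriction is ``as irreducible as possible,'' rebuild $\mathcal{G}_1$ as an iterated partial mapping torus along malnormal (finite-height) free-factor edge groups, and verify that the composite partial endomorphism is fully irreducible in the sense defined above. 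This ``train-track straightening'' of the virtual-fibering data is where I expect the real work to lie; once it is done, Theorem~\ref{thm:irreducible is finite height} identifies fully irreducible with finite height and with negative immersions, so nothing further is needed, and tracking indices through Agol's theorem and \cite{Kielak2020} is routine. Finally, a proof valid for \emph{all} locally quasiconvex hyperbolic groups --- rather than the cubulable, cohomological-dimension-two case --- would additionally require the still-open implication that negative immersions forces hyperbolicity and local quasiconvexity, so that the conjectured class is visibly closed under these constructions; pending that, I regard the cubulable $\operatorname{cd}=2$ case, with the reducible-monodromy step of (ii) as its technical heart, as the realistic first target.
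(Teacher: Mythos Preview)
The statement is a \emph{conjecture}; the paper offers no proof. Its only commentary (immediately after the statement) suggests that one direction ``could be approached by proving virtual specialness of locally quasiconvex (locally indicable) hyperbolic groups\dots\ [a]nd combining it with\dots\ virtual fibering,'' and that another direction is equivalent to the edge group $\mathcal{H}$ having the finitely generated intersection property. Your outline is consistent with, and considerably more detailed than, those hints: your step~(i) is essentially the paper's virtual-specialness-plus-fibering suggestion, and you correctly isolate the reducible-monodromy problem in step~(ii) as the substantive obstacle. The paper says nothing about how to address~(ii), so your train-track/invariant-free-factor-system proposal is genuinely beyond what the paper offers. One small caution: in the sentence beginning ``Finally,'' you seem to suggest that closing the conjectured class under the constructions requires knowing that negative immersions implies hyperbolicity and local quasiconvexity; but the conjecture is a one-way implication (locally quasiconvex hyperbolic $\Rightarrow$ virtually a fully irreducible partial mapping torus), so that reverse implication is not logically needed here, though it would of course strengthen the picture.
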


One direction of Conjecture~\ref{conj:local quasiconvex characterization} could be approached by proving virtual specialness of locally quasiconvex (locally indicable) hyperbolic groups - itself an interesting conjecture. And combining it with some of the current trend of using vanishing $L^2$-Betti number to obtain virtual fibering \cite{Kielak-Linton23}.
Another direction is equivalent to showing that the subgroup $\mathcal{H}$
has the \emph{finitely generated intersection property} in the sense that $\mathcal{H}\cap \mathcal{K}$ is finitely generated whenever $\mathcal{K}$ is finitely generated.

In Section~\ref{sec:height background}, we give background. In Section~\ref{sec:malnormalcoherence} we prove a special case of the main theorem relating malnormality and negative immersions. In Section~\ref{sec:finiteheight}, we prove the main theorems, and in Section~\ref{sec:generalization to pi1 mappings} we generalize our results to $\pi_1$-injective maps. Finally, in Section~\ref{sec:related properties} we discuss related properties.



\section{Background}\label{sec:height background}
We work in the category of $CW$-complexes. Let $Y$ be a $CW$-complex. We denote by $Y^{k}$ the $k$-skeleton of $Y$ and by $|Y|_k$ the number of $k$-cells in $Y$. Given complexes $X$ and $Y$, a map $Y\rightarrow X$ is \textit{cellular} if it maps $Y^k$ into $X^k$ for all $k$. It is \textit{combinatorial} if it maps open cells of $Y$ homeomorphically onto open cells of $X$. It is an \textit{immersion} if it is locally injective. A complex is collapsed if it has no free faces. A $1$-cell (edge) is \textit{isolated} if it is not a face of a $2$-cell. A $2$-complex $X$ has \textit{negative immersions} if there is $c>0$ such that for any combinatorial immersion $Y\rightarrow X$ with $Y$ compact, connected, collapsed (\textcolor{black}{containing no free faces}), and containing no isolated edges, either $\pi_1Y$ is trivial or $\chi\left(Y\right)\leq -c|Y|_2$ where $|Y|_2$ is the number of $2$-cells in $Y$ and $\chi \left(Y\right)$ is the Euler characteristic of $Y$.

A group $\mathcal{G}$ is \textit{coherent} if every finitely generated subgroup of $\mathcal{G}$ is finitely presented. The proof of Theorem~\ref{thm:negative Imm give coherence} can be found in \cite {WiseCoherenceSurvey}.
\begin{thm}\label{thm:negative Imm give coherence}
Let $X$ be a compact $2$-complex with negative immersions. Then $\pi_1X$ is coherent.
\end{thm}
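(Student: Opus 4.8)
The plan is to establish coherence directly: show that every finitely generated subgroup $G\le\pi_1X$ is finitely presented. Fix a generating set $g_1,\dots,g_k$ of $G$ and realize it by a map $Y_0\to X$ from a wedge $Y_0$ of $k$ circles, so $\pi_1Y_0=F_k$ surjects onto $G$ and the composite $F_k\to\pi_1X$ has image $G$ and kernel $N:=\ker(F_k\twoheadrightarrow G)$; if $G$ is free (in particular if $G=1$) it is already finitely presented, so assume $G$ is not free. Let $\mathcal{T}$ be the set of isomorphism classes of data $(T,q,r)$, where $T$ is a compact connected $2$-complex with at least one $2$-cell that is \emph{reduced} (collapsed, with no isolated edges), $q\colon T\to X$ is a combinatorial immersion presented as a \emph{tower} --- a composition of covering maps and subcomplex inclusions --- and $r\colon Y_0\to T$ is a $\pi_1$-surjective map with $q\circ r$ the reference map. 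A standard tower-lifting argument lifts $Y_0\to X$ through a compact complex $\pi_1$-surjectively; if that complex reduces to a graph then, since a graph has free $\pi_1$ while $G$ does not, it cannot carry $G$, so one kills a relation (attach a disk diagram and re-lift) and, the first Betti number of the evolving graph strictly dropping, within finitely many steps a $2$-cell must appear and reducing lands one in $\mathcal{T}$; thus $\mathcal{T}\ne\varnothing$. Note $\pi_1$-surjectivity of $r$ forces $\operatorname{im}(\pi_1T\to\pi_1X)=G$, hence $\pi_1T\ne1$, and presents $\pi_1T$ as a quotient $F_k/N_T$ with $N_T\subseteq N$.

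The crux --- and the only place the hypothesis enters --- is that $\mathcal{T}$ is finite. Let $(T,q,r)\in\mathcal{T}$. Since $q$ is a combinatorial immersion and $T$ is compact, connected, collapsed, has no isolated edges, and $\pi_1T\ne1$, negative immersions gives $\chi(T)\le -c\,|T|_2$. Since $r$ is $\pi_1$-surjective, $H_1(Y_0;\rationals)=\rationals^{\,k}$ surjects onto $H_1(T;\rationals)$, so $b_1(T)\le k$; as $T$ is connected, $-\chi(T)=b_1(T)-1-b_2(T)\le k-1$, whence $|T|_2\le (k-1)/c$. Because $T$ has no isolated edges, every edge of $T$ lies on a $2$-cell, and each $2$-cell of $T$ is carried by $q$ onto a $2$-cell of the compact complex $X$, so has an attaching path of length at most $L:=\max\{\text{length of }\partial e : e\text{ a }2\text{-cell of }X\}$; hence $|T|_1\le L\,|T|_2$ and $|T|_0\le|T|_1$. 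Thus the cell numbers of $T$ are bounded in terms of $X$ and $k$ only, and since $X$ is fixed and compact there are only finitely many such $(T,q,r)$. So $\mathcal{T}$ is finite.

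To conclude, pick $(T_*,q_*,r_*)\in\mathcal{T}$ with $N_{T_*}$ maximal (possible since $\{N_T\}$ is finite). I claim $N_{T_*}=N$, so that $\pi_1T_*\cong F_k/N\cong G$. If not, choose $w\in N\setminus N_{T_*}$; its image $\bar{w}\in\pi_1T_*$ is nontrivial yet lies in $\ker(\pi_1T_*\to\pi_1X)$, so $\bar{w}$ is represented by a loop $\gamma$ in $T_*$ with $q_*\circ\gamma$ null-homotopic in $X$. Let $D\to X$ be a disk diagram it bounds, glue $D$ to $T_*$ along $\gamma$ to form $Z:=T_*\cup_\gamma D$ with its evident combinatorial map to $X$; then $\pi_1Z=\pi_1T_*/\langle\!\langle\bar{w}\rangle\!\rangle$ and $Y_0\to T_*\hookrightarrow Z$ is $\pi_1$-surjective. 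Apply tower lifting and reduce: since $\pi_1Z$ still surjects onto the non-free group $G$, the same finitely-many-steps argument as before delivers a member $(T',q',r')\in\mathcal{T}$ through which $Y_0\to Z$ factors $\pi_1$-surjectively. Then $N_{T'}\supseteq N_{T_*}$ (as $\pi_1T_*$ surjects onto $\pi_1T'$ compatibly with the maps from $F_k$) and $w\in N_{T'}$ (as $\bar{w}$ dies in $\pi_1T'$), so $N_{T'}\supsetneq N_{T_*}$, contradicting maximality. Hence $G\cong\pi_1T_*$ is the fundamental group of a finite $2$-complex, so $G$ is finitely presented; as $G$ was arbitrary, $\pi_1X$ is coherent.

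The technical heart is the tower-lifting input used above --- that a map from a compact $2$-complex to $X$ factors $\pi_1$-surjectively through a compact $2$-complex by a tower --- together with the reduction steps (collapsing free faces, deleting isolated edges, and passing from the purely one-dimensional regime to one with $2$-cells) needed to land in $\mathcal{T}$ while preserving the structure maps and their $\pi_1$-surjectivity. The isolated-edge deletion is the delicate point, since an isolated edge can carry fundamental group; one checks such edges split off as free factors that are reabsorbed harmlessly, working with the slightly more flexible notion of an elementary homotopy through maps to $X$ where necessary. Granting this plumbing, the Euler-characteristic estimate powered by negative immersions, the resulting finiteness of $\mathcal{T}$, and the maximal-kernel argument are all straightforward.
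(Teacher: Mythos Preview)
The paper does not give its own proof of this statement; it simply cites \cite{WiseCoherenceSurvey}. Your outline is essentially the argument found there: Howie-style tower lifting to produce a compact immersed $(T,q)$ carrying $G$, the inequality $\chi(T)\le -c|T|_2$ to bound the cell counts of reduced towers, and a maximal-kernel step to force $\pi_1 T_*\cong G$.

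One point deserves more justification than you give it. You assert $\mathcal T$ is finite because the cell numbers of $T$ are bounded, but $r$ is part of the data and a priori there are infinitely many maps $Y_0\to T$ even for fixed $(T,q)$; without this your ``pick $N_{T_*}$ maximal'' step is unfounded. The fix is to use that $q$ is an immersion: a lift $r$ of the \emph{fixed} reference map $Y_0\to X$ through $q$ is determined by the image of the basepoint (unique path lifting), so there are at most $|T|_0$ lifts for each of the finitely many $(T,q)$, and $\mathcal T$ really is finite. One must also check that the based homotopy class of the reference map is preserved by your collapsing and disk-attachment steps; it is, and this is exactly where the ``plumbing'' you defer lives. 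Finally, the isolated-edge difficulty you flag can be sidestepped entirely: a short induction on the number of isolated edges (remove one and compare) shows that $\chi(Y)\le -c|Y|_2$ persists even when isolated edges are allowed (cf.\ Remark~\ref{rem:it works with isolated edges}), so you may simply drop that requirement from the definition of $\mathcal T$ and avoid the delicate deletion step altogether.
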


A \textit{graph} $F$ is a $1$-dimensional $CW$-complex whose \textit{vertices} and \textit{edges} are the $0$-cells and $1$-cells, respectively. There exist two \textit{incidence} maps $\tau_1, \tau_2 : F^1\rightarrow F^0$ mapping each edge $e\in F^1$ to its \textit{boundary vertices}, $\tau_1\left(e\right),\ \tau_2\left(e\right)$ called initial and terminal vertex, respectively. Each edge is \textit{oriented} from its initial vertex to its terminal vertex. The \textit{degree} of a vertex $v$ relative to the graph $F$, denoted by $\deg_F\left(v\right)$, is the number of edges in $F^1$ containing $v$ as an initial or terminal vertex. An edge whose initial and terminal vertices coincide with $v$ counts twice in $\deg_F\left(v\right)$. A \textit{leaf} is a vertex of degree $1$ and a \textit{spur} is an edge containing a leaf. 
A graph is \textit{trivial} if it is a union of vertices. A \textit{tree} is a non-empty graph  \textcolor{black}{with no embedded circles} and a \textit{forest} is a disjoint union of trees. The \textit{empty graph} is the graph with no edges and no vertices. We consider the empty graph as a forest.

A \textit{graph of graphs} $X$ with underlying graph $\Gamma_X$, \textit{vertex-spaces} $\left\{X_v\right\}_{v\in {\Gamma_X^0}}$, and \textit{edge-spaces} $\left\{X_e\right\}_{e\in {\Gamma_X^1}}$ is a topological space $X$ obtained as a quotient of graphs $\left\{X_v\right\}_{v\in {\Gamma_X^0}}$ and $\left\{X_e\times I\right\}_{e\in {\Gamma_X^1}}$ in the following manner: for each edge $e\in {\Gamma_X^1}$ with boundary vertices $v_1=\tau_1\left(e\right), v_2=\tau_2\left(e\right)$, the edge-space $X_e\times I$ is attached to the vertex-spaces $X_{v_1}, X_{v_2}$ via an \textit{outgoing} attaching map $X_e\times \left\{0\right\}\rightarrow X_{v_1} $ and an \textit{incoming} attaching map $X_e\times\left\{1\right\}\rightarrow X_{v_2}$. The Euler characteristic of the resulting space is given by
$$\chi\left(X\right)=\displaystyle\sum_{v\in \Gamma_X^0}\chi\left(X_v\right)-\displaystyle\sum_{e\in \Gamma_X^1}\chi\left(X_e\right)$$

A subgroup $\mathcal{H}\subset \mathcal{G}$ is \textit{malnormal} if $g\mathcal{H}g^{-1}\cap \mathcal{H}=1_\mathcal{G}$ whenever $g\notin \mathcal{H}$. The pair $\mathcal{H}, \mathcal{K}\subset \mathcal{G}$ is malnormal if $g\mathcal{H}g^{-1}\cap \mathcal{K}=1_\mathcal{G}$ for all $g\in \mathcal{G}$. 
\section{Malnormality and Negative Immersions }\label{sec:malnormalcoherence}
\begin{defn}\label{defn:boundary}
Let $H$ be a subgraph of a graph $F$. The \textit{boundary} of $H$ in $F$ is $$\partial H=\left\{v\in H^0\ :\ \deg_F\left(v\right)>\deg_H\left(v\right)\right\}.$$
\end{defn}
\begin{lem}\label{lem:209}
Let $H\subset F$ be a subgraph of a finite leafless graph $F$ with no trivial components. Then: $$\chi\left(F\right)-\chi\left(H\right)\ \leq\ \dfrac{-1}{2}|\partial H|_0$$
\end{lem}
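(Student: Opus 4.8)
The plan is to expand both Euler characteristics combinatorially and reduce the inequality to a count of edge--ends by degree. Write $V' = F^0\setminus H^0$ for the vertices of $F$ not in $H$ and $E' = F^1\setminus H^1$ for the edges of $F$ not in $H$. Since $H$ is a subgraph of $F$, we have $\chi(F)-\chi(H) = \bigl(|F^0|-|H^0|\bigr)-\bigl(|F^1|-|H^1|\bigr) = |V'| - |E'|$, so it suffices to prove
\[
2|E'| \;\ge\; 2|V'| + |\partial H|_0 .
\]

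The key step is a bookkeeping identity for the $2|E'|$ edge--ends contributed by the edges of $E'$. Because $H$ is a subgraph, every edge of $H$ has both of its endpoints in $H^0$; consequently every vertex $v\in V'$ has \emph{all} of its $\deg_F(v)$ incident edge--ends coming from edges of $E'$, while a vertex $v\in H^0$ receives exactly $\deg_F(v)-\deg_H(v)$ edge--ends from $E'$ — a quantity which is positive precisely when $v\in\partial H$. Summing the edge--end count over all vertices of $F$ therefore gives
\[
2|E'| \;=\; \sum_{v\in V'}\deg_F(v)\;+\;\sum_{v\in\partial H}\bigl(\deg_F(v)-\deg_H(v)\bigr).
\]

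It then remains to bound the two sums from below. Since $F$ is leafless and has no trivial components, every vertex of $F$ has degree at least $2$, so $\sum_{v\in V'}\deg_F(v)\ge 2|V'|$. By the definition of $\partial H$ each summand $\deg_F(v)-\deg_H(v)$ in the second sum is at least $1$, so that sum is at least $|\partial H|_0$. Combining these bounds yields $2|E'|\ge 2|V'|+|\partial H|_0$, which rearranges to $\chi(F)-\chi(H)=|V'|-|E'|\le -\tfrac12|\partial H|_0$; the degenerate case $H=\emptyset$ (where $\partial H=\emptyset$ and $V'=F^0$, $E'=F^1$) is handled by the very same computation.

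I do not expect a genuine obstacle here: the proof is a short counting argument, and the only points requiring care are the accounting of edge--ends at loops and at vertices of $\partial H$, and the observation that the hypothesis ``leafless with no trivial components'' is used exactly to guarantee $\deg_F(v)\ge 2$ for every vertex.
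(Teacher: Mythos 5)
Your proof is correct and rests on the same underlying degree-counting idea as the paper's argument: the paper applies the formula $\chi(J)=\sum_{v}\bigl(1-\deg(v)/2\bigr)$ to an auxiliary graph $J$ obtained from $F\setminus H$ by attaching a circle at each vertex of $\partial H$ (so every vertex of $J$ has degree $\geq 2$, and those over $\partial H$ have degree $\geq 3$), whereas you carry out the equivalent handshaking count directly on the edge set $E'=F^1\setminus H^1$, splitting the $2|E'|$ edge-ends between $V'$ and $\partial H$. Your version avoids the auxiliary construction, but the substance and the use of the hypotheses (leafless, no trivial components, to get $\deg_F\geq 2$) are identical.
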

\begin{proof}
A graph $J$ satisfies $\chi\left(J\right)=\displaystyle\sum_{v\in J^0}\biggl(1-\dfrac{\deg\left(v\right)}{2}\bigg)$. We temporarily use $\chi$ to denote the number of vertices minus the number of open edges.\\ Let $J=\biggl(F-H\biggl)\displaystyle\bigcup_{v\in \partial H^0}S^1_v$ be obtained by removing $H$ and adding a circle at each vertex of $\partial H$. Then 
\begin{equation*}
\chi\left(F\right)-\chi\left(H\right)\ =\ \chi\left(F-H\right)\ =\ \chi\left(J\right)\ \leq\ -\dfrac{1}{2}|\partial H|_0.\ \qedhere
\end{equation*}
\end{proof}
\begin{lem}\label{lem:207}
Let $H$ be a subgraph of a finite graph $F$. Let $\psi:H\rightarrow F$ be a cellular immersion with $H\subset \psi\left(H\right)$. Suppose $H$ has no tree component and $\psi^{-1}\left(H\right)$ is \textcolor{black}{homeomorphic} to a forest. Then $\psi\left(T\right)\ \cap\ \partial H\neq \emptyset$ for each component $T\subset \psi^{-1}\left(H\right)$. Consequently, there exists $M\textcolor{black}{=M\left(F, H, \psi\right)}>0$ such that $|H|_1\ \leq\ M\ |\partial H|_0$.
\end{lem}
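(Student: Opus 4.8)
The plan is to argue by contradiction for the first assertion, and then extract the quantitative bound by a compactness/finiteness argument. Suppose some component $T\subset\psi^{-1}(H)$ satisfies $\psi(T)\cap\partial H=\emptyset$. Since $\psi$ is a cellular immersion and $T$ is a component of $\psi^{-1}(H)$, the map $\psi|_T:T\to H$ has the property that every vertex $v\in T$ with $\deg_H(\psi(v))=\deg_F(\psi(v))$ — which holds for all $v\in T$ precisely because $\psi(T)$ avoids $\partial H$ — has $\deg_T(v)=\deg_F(\psi(v))=\deg_H(\psi(v))$; that is, $\psi|_T$ is locally surjective onto $H$ at every vertex, hence (being an immersion) a covering map onto its image component of $H$. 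But $T$ is a component of a forest by hypothesis, so $T$ is a tree, and a covering map from a tree onto a graph forces the base to be a tree as well. This contradicts the assumption that $H$ has no tree component, since the image component of $H$ under the covering $\psi|_T$ would then be a tree. Hence $\psi(T)\cap\partial H\neq\emptyset$ for every component $T\subset\psi^{-1}(H)$.

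For the consequence, first observe that $H\subset\psi(H)$ together with the fact that $\psi$ is combinatorial forces every edge of $H$ to lie in $\psi(e')$ for some edge $e'$ of $H$; iterating, each edge of $H$ is covered by $\psi^{-1}(H)$, so $|\psi^{-1}(H)|_1\geq |H|_1$ (indeed $\psi$ restricted to $\psi^{-1}(H)$ surjects onto $H$). Now I would bound the number of components of $\psi^{-1}(H)$: each component meets $\partial H$ after applying $\psi$, and since $\psi$ is combinatorial, distinct preimages of a single vertex of $\partial H$ can be shared among boundedly many components — in fact the number of components of $\psi^{-1}(H)$ is at most $\sum_{v\in\partial H}|\psi^{-1}(v)|_0\leq D\,|\partial H|_0$ where $D$ is the maximal vertex-fiber size of $\psi$, a constant depending only on $F,H,\psi$. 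Since $\psi^{-1}(H)$ is a forest, a forest with $k$ components and $N$ edges has at most $N\leq (\text{number of vertices})$, so we need instead to bound $|H|_1$ directly.

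The cleanest route is: $|H|_1\leq |\psi^{-1}(H)|_1 \leq$ (number of edges of the finite graph $F$) is \emph{not} available since we want the bound in terms of $|\partial H|_0$ with $H$ varying. So instead I would use that $\psi^{-1}(H)$ is a forest, hence $|\psi^{-1}(H)|_1 < |\psi^{-1}(H)|_0$, and bound $|\psi^{-1}(H)|_0$ by the number of vertex-orbits: $|\psi^{-1}(H)|_0\leq D\cdot|H^0|$. Combining, $|H|_1\leq |\psi^{-1}(H)|_1 < |\psi^{-1}(H)|_0 \leq D|H^0|$. Finally, I relate $|H^0|$ to $|\partial H|_0$: since $H$ has no tree component, $H$ is leafless-up-to-spurs and $|H|_1\geq |H^0| - (\text{number of tree-like pieces})$; more robustly, since the first claim shows every component of $\psi^{-1}(H)$ meets $\partial H$, and $\psi^{-1}(H)$ surjects onto $H$ with fibers of size $\leq D$, one gets $|H^0|\leq D\cdot|\psi^{-1}(H)^0|$, and bounding the forest $\psi^{-1}(H)$'s vertices by (edges $+$ components) $\leq |\psi^{-1}(H)|_1 + D|\partial H|_0$, together with $|\psi^{-1}(H)|_1\geq|H|_1\geq|H^0|/C$ for a constant $C$ (using leaflessness of components of $H$), yields a linear inequality $|H|_1\leq M|\partial H|_0$ after absorbing constants into $M=M(F,H,\psi)$.

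\textbf{Main obstacle.} The first (qualitative) claim is a clean covering-space argument and I expect it to go through smoothly. The genuine difficulty is the bookkeeping in the ``Consequently'' clause: one must track that the constant $M$ depends only on the fixed data $(F,H,\psi)$ and not on any varying subgraph — here $H$ is fixed, so the only real subtlety is correctly chaining the inequalities $|H|_1\leftrightarrow|\psi^{-1}(H)|_1\leftrightarrow(\text{vertices of the forest})\leftrightarrow|\partial H|_0$ without a circular dependence, and making sure the ``forest'' hypothesis is used exactly once (to pass from edges to vertices) while the ``no tree component'' hypothesis is used exactly once (to rule out trivial components and force each preimage component to hit $\partial H$). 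I would set up these inequalities carefully as a short chain rather than attempting a single estimate.
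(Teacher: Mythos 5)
The covering-space argument for the first assertion has a genuine gap. A cellular immersion is \emph{locally injective}, not locally surjective: if $v\in T$ and $\psi(v)\notin\partial H$, then (after subdividing so $\psi$ is combinatorial on $T$) the edges of $T$ at $v$ are exactly the edges of $H$ at $v$, since every edge at $\psi(v)$ lies in $H$; thus $\deg_T(v)=\deg_H(v)$, and all one can say is $\deg_H(v)\leq\deg_H(\psi(v))$ --- with strict inequality possible, because nothing in the hypotheses forces $\psi$ to be locally onto at $v$. So the equality $\deg_T(v)=\deg_F(\psi(v))$ you assert is unjustified, $\psi|_T$ need not be a covering onto its image, and the step ``the image component of $H$ is a tree'' does not follow. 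The conclusion itself is correct but is reached more directly: if $\psi(T)\cap\partial H=\emptyset$ then $T$ is a \emph{full subcomplex} of $H$ (each edge meeting $T$ lies entirely in $T$, since a breaking point inside an edge would map to $\partial H$; and every edge of $H$ incident to a vertex of $T$ is met by $T$, since every edge at $\psi(v)$ is in $H$). Hence $T$ is a union of components of $H$, so $T$ is itself a tree component of $H$, contradicting the hypothesis.

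The route to the quantitative bound also does not close as written and is not the paper's. You appeal to ``leaflessness of components of $H$,'' which is not a hypothesis (only ``no tree component'' is, and $H$ may well have spurs), and the chain is circular: you first bound $|\psi^{-1}(H)|_0$ by a multiple of $|H^0|$ and later $|H^0|$ by a multiple of $|\psi^{-1}(H)^0|$, so no inequality in terms of $|\partial H|_0$ emerges. The paper avoids Euler-characteristic bookkeeping of the forest entirely. Subdivide each component $T$ of $\psi^{-1}(H)$ to a tree $\bar T$ on which $\psi$ is combinatorial and set $d=\max_T\diam(\bar T)$. Because $H\subset\psi(H)$, every $x\in H$ is $\psi(y)$ for some $y$ in some $\bar T$; by the first claim there is $z\in\bar T$ with $\psi(z)\in\partial H$, and $\dist(y,z)\leq d$, so (as $\psi|_{\bar T}$ is combinatorial and hence does not increase distance) $\dist(x,\psi(z))\leq d$. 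Therefore $H\subset\bigcup_{v\in\partial H}\mathcal N_d(v)$, and $|H|_1\leq M|\partial H|_0$ with $M=\max_{v\in F^0}|\mathcal N_d(v)|_1$. This is the form used in Theorem~\ref{thm:malnormalhnn}, where $d$ and $M$ are controlled by the fixed data $(F,H,\psi)$ and so transfer to the varying complex $Y$.
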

\begin{proof}
\textcolor{black}{Note that $\psi^{-1}\left(H\right)$ is not necessarily a subgraph of $F$. Each tree $T\subset \psi^{-1}\left(H\right)$ can be subdivided into a tree $\bar{T}$ so that $\psi|_{\bar{T}}$ is combinatorial}. Let $$d=\max\left\{\diam\left(\bar{T}\right)\ :\ T\subset \psi^{-1}\left(H\right),\ \textcolor{black}{\text{where}\ \bar{T}\ \text{is the subdivision of $T$}}\right\}$$ Since $H$ has no tree components, each component $T\subset \psi^{-1}\left(H\right)$ has a leaf that maps to $\partial H$.  So $H\subset \displaystyle\bigcup_{v\in \partial H}\mathcal{N}_d\left(v\right)$ where $\mathcal{N}_d\left(v\right)$ is a ball of radius $d$ centered at $v$. Let $M=\displaystyle\max\left\{|\mathcal{N}_d\left(v\right)|_1\ :\ v\in F^0\right\}$. Then $|H|_1\ \leq\ M |\partial H|_0$.
\end{proof}
\begin{defn}\label{defn:mappingtorus}
Let $F$ be a graph and let $H\subset F$ be a subgraph. The \textit{mapping torus} of a map $\psi:H\rightarrow F$ is the $2$-complex $X$ obtained as follows:
$$X=(F\sqcup (H\times [0,1]) )\ / \left\{\left(x,0\right)\sim x,\ \left(x,1\right)\sim\psi\left(x\right)\ :\ x\in H\right\}$$
The $2$-complex $X$ decomposes as a graph of spaces $X\rightarrow \Gamma_X$, where $\Gamma_X$ is a circle with one vertex $v$ and one edge $e$. Let $X_v=F$ and $X_e=H\times \left[0,1\right]$ be the vertex-space and edge-space, respectively, where $X_e$ is attached to $X_v$ via the maps $H\times \left\{0\right\}\rightarrow X_v$ and $H\times \left\{1\right\}\rightarrow X_v$. We refer to the images of $H\times \left\{0\right\}$ and $H\times \left\{1\right\}$ in $X_v$ as the \textit{outgoing} and \textit{incoming} edge-spaces, respectively. An edge $e$ of $X$ is \textit{vertical} if $e\subset F$, and \textit{horizontal} otherwise. Note that each vertex of $H$ gives rise to a horizontal edge of $X$, and each edge of $H$ gives rise to a $2$-cell of $X$. Moreover, each horizontal edge and each $2$-cell of $X$ arises in this manner.
\end{defn}
\begin{rem}\label{rem:common-claims}
Let $X$ be the mapping torus of a cellular immersion $\psi:H\rightarrow F$, where $H$ is a subgraph of a finite graph $F$. Let $Y\rightarrow X$ be a combinatorial immersion where $Y$ is a nontrivial compact, connected, and collapsed $2$-complex with no isolated edges. The decomposition $X\rightarrow \Gamma_X$ induces a decomposition $Y\rightarrow \Gamma_Y$ whose vertex-spaces are the components of the preimage of $F$ and whose \textcolor{black}{open} edge-spaces are the components of the preimage of $H\times (0,1)$. Let $Y_v$ be the disjoint union of the vertex-spaces, and let $Y_e\subset Y_v$ be the disjoint union of the outgoing edge-spaces. Then there is a \textcolor{black}{cellular immersion} $\Psi:Y_e\rightarrow Y_v$ whose mapping torus is $Y$ and the following diagram commutes:
\begin{center}
\begin{tikzcd}
Y_e \arrow[r, "\Psi"] \arrow[d] & Y_v \arrow[d] \\
H \arrow[r, "\psi"]  & F 
\end{tikzcd}
\end{center}
Define the \textit{boundary} of $Y$, denoted by $\partial Y$, as the union of the boundary vertices of $Y_e$ in $Y_v$. We make the following remarks:
\begin{enumerate}
\item\label{remark:07} The $2$-cells of $Y$ are in correspondence with the edges of $Y_e$.
\item\label{remark:01} Distinct outgoing edge-spaces in a vertex-space are disjoint. This holds since $Y\rightarrow X$ is an immersion and the outgoing edge-space in $X$ is an embedding. In particular, each edge of $Y_v$ is in at most one outgoing edge-space. 
\item\label{remark:02} Since $Y$ is collapsed and has no isolated edges, each edge in $Y_v$ lies in $\image\left(\Psi\right)$. Indeed, if there is a non-isolated edge $e\not\subset \image\left(\Psi\right)$, then by Remark~\eqref{remark:01}, $e$ lies in a unique outgoing edge-space. However, outgoing edge-spaces are embedded and so $e$ is a free face, contradicting that $Y$ is collapsed.
\item\label{remark:03} No edge-space of $Y$ has a leaf, since a leaf would give rise to a free face.
\item\label{remark:4.5} No edge-space (vertex-space) is a single vertex since otherwise $Y$ would have an isolated edge, a free face, or be trivial.
\item\label{remark:04} Outgoing edge-spaces are embeddings and $\Psi$ is an immersion since these mappings pull back from \textcolor{black}{the combinatorial immersion} $Y\rightarrow X$.
\item\label{remark:05.25} No vertex-space in $Y_v$ has a leaf. Indeed, by Remark~\eqref{remark:02}, each edge of $Y_v$ lies in an incoming edge-space. By Remark~\eqref{remark:03}, no edge-space has a leaf. By Remark~\eqref{remark:04}, the attaching maps of edge-spaces are immersions. Since the image of an immersed leafless graph contains no leafs, the claims holds. \textcolor{black}{Furthermore, by Remark~\eqref{remark:4.5}, no vertex-space of $Y$ is a tree, and so, $\chi\left(Y_{v_i}\right)\leq 0$ for all vertex-spaces $Y_{v_i}$ of $Y$.}

\end{enumerate}
\end{rem}
\begin{thm}\label{thm:malnormalhnn}
Let $H$ be a subgraph of a finite graph $F$. Let $X$ be the mapping torus of a cellular immersion $\psi:H\rightarrow F$. Suppose $\psi^{-1}\left(H\right)$ is \textcolor{black}{homeomorphic} to a forest. Then $X$ has negative immersions.
\end{thm}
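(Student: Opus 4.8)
The plan is to take an arbitrary combinatorial immersion $Y\rightarrow X$ with $Y$ compact, connected, collapsed, nontrivial, and with no isolated edges, and to bound $\chi(Y)$ linearly in $|Y|_2$ using the graph-of-spaces decomposition $Y\rightarrow\Gamma_Y$ from Remark~\ref{rem:common-claims}. Write $Y_v$ for the disjoint union of vertex-spaces and $Y_e\subset Y_v$ for the disjoint union of outgoing edge-spaces, with the cellular immersion $\Psi:Y_e\rightarrow Y_v$ whose mapping torus is $Y$. Since $\Gamma_Y$ has the homotopy type of a graph, $\chi(Y)=\chi(Y_v)-\chi(Y_e)$, and because $\Psi$ is $\pi_1$-injective (each edge-space immerses) with $Y_e\subset\Psi(Y_e)$ after passing to the relevant components — or more precisely because each component of $Y_e$ maps into $Y_v$ — we get $\chi(Y_e)=\chi(\Psi(Y_e))$ componentwise when $\Psi$ is injective on $\pi_1$; the point is to compare $\chi(Y_v)-\chi(Y_e)$ against $|\partial Y|_0$.

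First I would reduce to the local estimates already proved. By Remark~\ref{rem:common-claims}\eqref{remark:05.25}, no vertex-space of $Y$ is a tree and each vertex-space is leafless, so Lemma~\ref{lem:209} applies componentwise to the pair $Y_e\subset Y_v$: summing over components of $Y_v$ gives $\chi(Y_v)-\chi(Y_e)\leq -\tfrac12|\partial Y|_0$, hence $\chi(Y)\leq -\tfrac12|\partial Y|_0$. Next I would use the hypothesis that $\psi^{-1}(H)$ is a forest to control $\partial Y$ from below in terms of $|Y|_2$. The commuting square in Remark~\ref{rem:common-claims} shows that $\Psi^{-1}(Y_e)$ maps into $\psi^{-1}(H)$, and since $Y\rightarrow X$ is an immersion, each component of $\Psi^{-1}(Y_e)$ immerses into a component of $\psi^{-1}(H)$, which is a tree; so $\Psi^{-1}(Y_e)$ is itself a forest. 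Now Remark~\ref{rem:common-claims}\eqref{remark:4.5} says no edge-space is a single vertex and \eqref{remark:03} says no edge-space has a leaf, so $Y_e$ has no tree components; therefore Lemma~\ref{lem:207} applies to $\Psi:Y_e\rightarrow Y_v$ (with $Y_e$ in the role of $H$, $Y_v$ in the role of $F$, $\Psi$ in the role of $\psi$), yielding a bound $|Y_e|_1\leq M'|\partial Y|_0$. The subtlety is that the constant $M$ in Lemma~\ref{lem:207} depends a priori on the data $(F,H,\psi)$, whereas here the data $(Y_v,Y_e,\Psi)$ vary with $Y$; I would instead rerun the argument of Lemma~\ref{lem:207} directly, observing that the diameter bound $d$ for components of $\psi^{-1}(H)$ in $F$ is a fixed constant depending only on $(F,H,\psi)$, and that each component $T$ of $\Psi^{-1}(Y_e)$ immerses into some component of $\psi^{-1}(H)$ hence has diameter $\leq d$; moreover each such $T$ contains a leaf mapping to $\partial Y$ since $Y_e$ has no tree components. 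This gives $|Y_e|_1\leq N|\partial Y|_0$ for $N=\max\{|\mathcal N_d(v)|_1: v\in F^0\}$, a constant depending only on $(F,H,\psi)$.

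Finally I would combine the two inequalities. By Remark~\ref{rem:common-claims}\eqref{remark:07}, $|Y|_2=|Y_e|_1$, so $|Y|_2\leq N|\partial Y|_0$, i.e. $|\partial Y|_0\geq |Y|_2/N$. Plugging into $\chi(Y)\leq-\tfrac12|\partial Y|_0$ gives $\chi(Y)\leq-\tfrac{1}{2N}|Y|_2$, which is the negative immersions inequality with $c=\tfrac{1}{2N}$, a constant independent of $Y$. One should also dispose of the degenerate cases: if $\psi(H)\not\supseteq H$ in the relevant sense, or if some vertex-space fails the hypotheses — but the remarks in Remark~\ref{rem:common-claims} are exactly designed to rule these out for a collapsed $Y$ with no isolated edges, so this is routine.

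The main obstacle is the uniformity of the constant: making sure that $N$ (equivalently $M$ from Lemma~\ref{lem:207}) does not secretly depend on $Y$. The resolution is that all the geometry happens downstairs in the fixed finite complex $X$ — the diameter bound $d$ on preimage-trees and the ball-size bound $N$ are features of $\psi:H\rightarrow F$ alone, and the immersion $Y\rightarrow X$ only pulls these back without worsening them. A secondary point to verify carefully is that $\Psi^{-1}(Y_e)$ is genuinely a forest (not merely that it immerses into one): an immersion of a compact graph into a tree is injective on $\pi_1$, hence each component, being a compact graph with trivial fundamental group, is a tree.
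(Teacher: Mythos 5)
Your proposal is correct and matches the paper's proof essentially step for step: same graph-of-spaces decomposition from Remark~\ref{rem:common-claims}, same use of Lemma~\ref{lem:209} for $\chi(Y)\leq-\tfrac12|\partial Y|_0$, and the same rerun of the Lemma~\ref{lem:207} argument with the diameter bound $d$ and ball-size bound $N$ pulled back from the fixed complex $X$ to ensure uniformity of the constant. Your care about why the constant depends only on $(F,H,\psi)$ and your justification that $\Psi^{-1}(Y_e)$ is genuinely a forest (immersion into a tree is $\pi_1$-injective) are exactly the points the paper handles, just stated more tersely there.
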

\begin{proof}
Let $Y\rightarrow X$ be a combinatorial immersion where $Y$ is a nontrivial compact, connected, and collapsed $2$-complex with no isolated edges. As in Remark~\ref{rem:common-claims}, let $Y\rightarrow \Gamma_Y$ be the induced graph-of-spaces decomposition, and let $\Psi:Y_e\rightarrow Y_v$ be the map whose mapping torus is $Y$. By Remark~\ref{rem:common-claims}.\eqref{remark:02}, we have $Y_e\subset \image\left(\Psi\right)$. By Remark~\ref{rem:common-claims}.\eqref{remark:04}, the map $\Psi$ projects to $\psi$ and so $\Psi^{-1}\left(Y_e\right)$ is \textcolor{black}{homeomorphic to }a forest. \textcolor{black}{Each component $T'\subset \Psi^{-1}\left(Y_e\right)$ can be subdivided to form a tree $\bar{T}'$ so that $\Psi|_{\bar{T}'}$ is combinatorial}. Since $Y\rightarrow X$ is a combinatorial immersion, the \textcolor{black}{subdivided} trees of $\Psi^{-1}\left(Y_e\right)$ \textcolor{black}{embed} into the \textcolor{black}{subdivided trees of} $\psi^{-1}\left(H\right)$ \textcolor{black}{(as in Lemma~\ref{lem:207})}, and so for each component $T'\subset  \Psi^{-1}\left(Y_e\right)$, we have
$\diam\left(\bar{T}'\right)\ \leq \ d$, where $d=\max\left\{\diam\left(\bar{T}\right)\ :\ T\ \text{is a component in}\ \psi^{-1}\left(H\right)\right\}$. Moreover, since $X$ is compact, there is an upper bound $M=M\left(d\right)$ on the number of edges in any $d$-ball in $Y_v$. By Remarks~\ref{rem:common-claims}.\eqref{remark:03}-\eqref{remark:4.5}, $Y_e$ has no tree component. By Lemma~\ref{lem:207}, we have $|Y_e|_1\ \leq\ M|\partial Y_e|_0$. By Lemma~\ref{lem:209}, and Remark~\ref{rem:common-claims}.\eqref{remark:07}, we have:
\begin{equation*}
\chi\left(Y\right)\ =\ \chi\left(Y_v\right)-\chi\left(Y_e\right)\ \leq\ \dfrac{-1}{2}|\partial Y_e|_0\ \leq\ \dfrac{-1}{2M}|Y_e|_1\ =\  \dfrac{-1}{2M}|Y|_2 .\qedhere
\end{equation*}
\end{proof}
\section{Finite Height Mappings}\label{sec:finiteheight}

\begin{defn}\label{defn:composition} The \textit{generalized composition} of the functions $\alpha:A\rightarrow B$ and $\beta:C\rightarrow D$, where $C\subseteq B$, denoted by $\beta\bullet \alpha$, is $\beta\bullet \alpha=\beta\circ \alpha|_{\alpha^{-1}\left(C\right)}$.
\end{defn}

\begin{defn}\label{defn:combinatorial}
\textcolor{black}{Let $F$ be a connected graph and let $H\subset F$ be a subgraph.} Let $\psi:H\rightarrow F$ be a cellular immersion. For each $i\geq 0$, let $\psi^{i}$ denote the generalized composition of $\psi$ with itself $i$ times, where $\psi^0=id_F:F\rightarrow F$. Let $\psi^{-i}\left(H\right)=\left(\psi^{i}\right)^{-1}\left(H\right)$.

Let $Z_i$ denote the domain of $\psi^{i}$. Then $Z_{i+1}=\left\{x\in Z_{i}\ :\ \psi^{i}\left(x\right)\in H\right\}\textcolor{black}{=\psi^{-i}\left(H\right)}$, \textcolor{black}{for each $i\geq 0$}. The \textit{combinatorial domain} $D_i$ of $\psi^{i}$ is the largest subgraph in $Z_i$. Note that $Z_i$ is not necessarily a subgraph of $F$, $Z_{i+1}\subseteq Z_i$, \textcolor{black}{and $D_{i+1}\subseteq D_i$} for all $i\geq 0$. \textcolor{black}{Moreover, $Z_i$ has a part that deformation retracts to $D_i$ and a part that is a disjoint union of closed intervals and singletons. Thus, when $Z_i$ is not homeomorphic to a forest, at least one component of $D_i$ is not a tree}. Let $D_\infty\subset H$ be the subgraph whose edges and vertices map into $H$ under all powers of $\psi$. Note that $\emptyset\subseteq D_\infty\subseteq D_{i+1}\subseteq D_i$. 

The \textit{directed height} of $\psi$ is:
$$\overrightarrow{\height}\left(\psi\right)=\inf\left\{i :\ \psi^{-i}\left(H\right)\ \text{is a forest}\right\}$$ 
Note that $\overrightarrow{\height}\left(\psi\right)=0$ if and only if $H$ is a forest. We use the following notation: $$\lVert \psi \rVert=\max\left\{|\psi\left(e\right)|_1\ :\ e\subset H^{1}\right\}$$
\end{defn}
\begin{rem}\label{rem:bound on paths in bass serre tree}
\textcolor{black}{ $\overrightarrow{\height}\left(\psi\right)=\ell<\infty$ if and only if} the length of embedded directed paths in the Bass-Serre tree with infinite stabilizers is bounded by $\ell$. Note that the Bass-Serre tree is directed because of the map to the underlying graph of the HNN extension which is a directed loop.
\end{rem}
\begin{defn}
 The \textit{height} of a subgroup $\mathcal{H}$ in $\mathcal{G}$, denoted by $\height\left(\mathcal{H}\right)$, is the supremal number of distinct cosets $\left\{\mathcal{H}g_i\right\}_{i\in I}$ such that $\displaystyle\bigcap_{i\in I} \mathcal{H}^{g_i}$ is infinite.
\end{defn}
\begin{lem}\label{lem:heights are the same}
Let $H$ be a subgraph of a finite graph $F$ and let $\psi:H\rightarrow F$ be a cellular immersion. Let $X$ be the mapping torus of $\psi$. Then $\pi_1H$ has finite height in $\pi_1X$ if and only if $\psi$ has finite directed height.
\end{lem}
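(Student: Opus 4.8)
The plan is to translate the combinatorial statement about directed height of $\psi$ into the algebraic statement about height of $\pi_1H$ in $\pi_1X$ by using the Bass--Serre tree $T$ of the HNN decomposition of $\mathcal{G}=\pi_1X$ over $\mathcal{F}=\pi_1F$ with edge group $\mathcal{H}=\pi_1H$ (as recorded in Remark~\ref{rem:bound on paths in bass serre tree}). The edge stabilizers of $T$ are precisely the conjugates of $\mathcal{H}$, and $T$ carries a $\mathcal{G}$-equivariant orientation projecting to the directed loop $\Gamma_X$. First I would recall the standard dictionary: a collection of distinct cosets $\{\mathcal{H}g_i\}_{i\in I}$ with $\bigcap_i \mathcal{H}^{g_i}$ infinite corresponds to a collection of distinct edges $\{e_i\}$ of $T$ (translates of the base edge $e$) whose stabilizers have infinite common intersection; and an infinite subgroup of $\mathcal{G}$ fixes a well-defined minimal subtree, which here (since the pieces are free and the intersection is nontrivial and hence infinite) fixes at least an edge-path. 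So $\height(\mathcal{H})=\infty$ iff for every $n$ there are $n$ distinct edges of $T$ with a common infinite stabilizer, equivalently (taking the fixed subtree of that stabilizer) an arc in $T$ of combinatorial length $\geq n$ all of whose edges have infinite stabilizer.

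The second step is to replace ``an arc with all edges of infinite stabilizer'' by ``a \emph{directed} arc with all edges of infinite stabilizer.'' This is the point that needs care. A general arc in $T$ alternates direction, but an arc of length $n$ with infinite-stabilizer edges contains, by pigeonhole, a monotone (directed) subarc of length $\geq n/2$ in one of the two orientations — and reversing all orientations of $T$ corresponds to replacing the HNN presentation $\mathcal{F}*_{\mathcal{H}^t=\mathcal{K}}$ by its ``inverse,'' which does not change whether directed arcs of unbounded length with infinite stabilizer exist (one can also argue directly: a backward edge with infinite stabilizer at a vertex forces, via the algebra of the edge maps $\psi$ and $\psi^{-1}$, a forward one, because the stabilizer being infinite means some conjugate of $\mathcal{H}$ meets $\mathcal{H}$ infinitely, i.e. along a whole subgroup). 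So $\height(\mathcal{H})=\infty$ iff there exist directed arcs in $T$ of unbounded length with every edge having infinite stabilizer. By Remark~\ref{rem:bound on paths in bass serre tree} this is exactly the negation of $\overrightarrow{\height}(\psi)<\infty$.

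The third step is the concrete dictionary between ``directed arcs in $T$ with infinite stabilizer'' and ``the combinatorial domain $D_i$ is not a forest,'' which is really the content that makes Remark~\ref{rem:bound on paths in bass serre tree} true and which I would prove here. A directed edge-path of length $i$ in $T$ starting at the base vertex, with stabilizer $\mathcal{S}$, corresponds (after conjugating the path to be based at the standard lift) to an element $g\in\mathcal{F}$ and the requirement that $\Psi^i$ be defined on $\mathcal{S}\leq\mathcal{H}$ in the sense of the generalized composition, i.e. $\mathcal{S}$ lies in the domain of $\Psi^i$; on the graph level, a nontrivial such $\mathcal{S}$ (infinite stabilizer) corresponds exactly to the core of $\psi^{-i}(H)$ being nontrivial, i.e.\ to $D_i$ (equivalently $Z_i=\psi^{-i}(H)$) not being a forest, using the observation already stated in Definition~\ref{defn:combinatorial} that $Z_i$ retracts onto $D_i$ together with intervals and singletons, so $Z_i$ is not homeomorphic to a forest precisely when $D_i$ has a non-tree component, which carries the infinite (free, nontrivial) fundamental group that is the stabilizer. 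Conversely a nontrivial loop in $D_i$ survives all $i$ applications of $\psi$ into $H$ and hence its $\pi_1$-image generates an infinite common intersection $\bigcap_{j=0}^{i}\mathcal{H}^{t^j g_j}$ of $i+1$ distinct cosets' conjugates. Running this for all $i$ gives: $\overrightarrow{\height}(\psi)=\infty$ iff $\height_{\pi_1X}(\pi_1H)=\infty$, which is the lemma.

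I expect the main obstacle to be step two — handling the undirected-to-directed reduction rigorously, i.e.\ ensuring that an infinite-stabilizer arc genuinely yields a long \emph{directed} one, rather than an arc that keeps turning around at vertices where $\mathcal{H}$ and the incoming $\mathcal{K}$ interact. Concretely one must rule out (or absorb) ``turn-back'' configurations at a vertex $v$ where an edge entering $v$ and an edge leaving $v$ both have infinite stabilizer but the two infinite subgroups of $\mathrm{Stab}(v)$ (a conjugate of $\mathcal{F}$) do not have infinite intersection with each other; the point is that for the \emph{height} definition we only need $n$ distinct cosets with \emph{some} infinite common intersection, so a long arc with infinite edge stabilizers automatically contains a long directed sub-arc whose edge stabilizers \emph{all} contain a fixed infinite subgroup (namely the stabilizer of the whole arc), and that fixed infinite subgroup is what we track through the $\psi^i$. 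Making this pigeonhole-plus-equivariance argument airtight, and checking it interacts correctly with the fact that $Z_i$ is not a genuine subgraph of $F$, is where the real work lies; the rest is the standard Bass--Serre translation already foreshadowed in the paper.
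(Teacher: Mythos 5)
Your overall framework—the Bass--Serre tree $T$ of the HNN splitting, the level map to $\mathbb{Z}$, and the dictionary between cosets of $\mathcal{H}$ and edges of $T$—is the same as the paper's. But two of your key reduction steps have genuine gaps, and they are precisely the places where the paper does something different.

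First, your step from ``$n$ distinct cosets with infinite common intersection'' to ``an arc of length $\geq n$ all of whose edges have infinite stabilizer'' is not valid. The $n$ corresponding edges all lie in the subtree $T'$ pointwise-fixed by the infinite intersection $\mathcal{S}$, but $T'$ is a \emph{tree}, not an arc; it can branch arbitrarily, and a tree with $n$ edges may contain no arc longer than $2$ (a star). To convert ``many edges'' into ``a long arc,'' or to bound $T'$ at all, one must bound the degree of $T'$. This is exactly where the paper invokes \cite{GMRS98}: the finitely generated subgroups $\mathcal{H}$ and $\mathcal{K}=\psi_*\mathcal{H}$ have finite height in the free group $\mathcal{F}$, so the number of distinct conjugates of $\mathcal{H}$ (resp.\ $\mathcal{K}$) inside a single vertex stabilizer $\mathcal{F}^g$ containing $\mathcal{S}$ is uniformly bounded. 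Your proposal never uses this fact, and without it the reduction to arcs fails.

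Second, the pigeonhole ``an arc of length $n$ contains a monotone (directed) sub-arc of length $\geq n/2$'' is false in this Bass--Serre tree. The level map $T\to\mathbb{Z}$ restricted to an embedded arc need not be monotone after a unique minimum. At a vertex $v$ of $T$ both edges of the arc can be incoming (two conjugates of $\mathcal{K}$ in $\Stab(v)$) or both outgoing (two conjugates of $\mathcal{H}$), so the arc can zigzag in level indefinitely; in the worst case its longest directed sub-arc has length $1$, independent of $n$. Your parenthetical ``reversing the orientation does not change whether unbounded directed arcs exist'' does not rescue this, since zigzags are short in both orientations simultaneously. Ruling out long infinite-stabilizer zigzags is genuinely part of the content of the lemma, and you have not supplied an argument for it.

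The paper in fact sidesteps both issues rather than repairing them. For the direction $\height(\mathcal{H})<\infty\Rightarrow\overrightarrow{\height}(\psi)<\infty$, it observes directly that a directed path of length $n$ in $T$ with infinite stabilizer already produces $n$ distinct cosets with infinite common intersection, so $n\leq\height(\mathcal{H})$; no passage from undirected to directed arcs is needed. For the converse direction $\overrightarrow{\height}(\psi)<\infty\Rightarrow\height(\mathcal{H})<\infty$, given cosets with infinite common intersection $\mathcal{S}$, it considers the whole fixed tree $T'$ (not an arc inside it), bounds its directed paths by $\overrightarrow{\height}(\psi)$, bounds its vertex degrees via \cite{GMRS98}, and then bounds $|E(T')|$. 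Your proposal does not clearly separate the two directions and treats the harder converse as if it were symmetric to the easy one; the missing ingredient in your write-up is exactly the \cite{GMRS98} degree bound, which is what makes the converse work. The dictionary in your step~3 (directed arcs with infinite stabilizer $\leftrightarrow$ $\psi^{-i}(H)$ having a nontree component) is correct and aligned with Remark~\ref{rem:bound on paths in bass serre tree}, but it is downstream of the gaps above.
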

\begin{proof}
Let $\mathcal{H}=\pi_1H$ and $\mathcal{F}=\pi_1F$. Suppose $\mathcal{H}$ has finite height in $\pi_1X$. Then $\height\left(\mathcal{H}\right)$ bounds the number of distinct cosets $\left\{\mathcal{H}g_i\right\}$  such that $|\mathcal{H}^{g_1}\cap\cdots\cap\mathcal{H}^{g_n}|$ is infinite. So the number of edges in the Bass-Serre tree $T$ with a common infinite stabilizer is likewise bounded.  Hence $\height\left(\mathcal{H}\right)$ bounds the length of embedded paths in $T$ with infinite stabilizer. Thus $\overrightarrow{\height}\left(\psi\right)<\infty$.

Suppose $\overrightarrow{\height}\left(\psi\right)<\infty$. Then $\overrightarrow{\height}\left(\psi\right)$ bounds the length of embedded paths in $T$ with infinite stabilizers. There is a uniform upper bound on the degree of vertices of any subtree $T'\subset T$ with point-wise stabilizer of $T'$ infinite. Indeed, the number of incoming edges of each vertex in $T'$ is bounded by $r=\height\left(\psi_*\left(H\right)\right)$ in $\mathcal{F}$, since every finitely generated subgroup of a free group has finite height  \cite{GMRS98}. 
Thus $T'$ is a rooted tree of length $\leq \overrightarrow{\height}\left(\psi\right)$ and incoming degree $\leq r$. So the number of edges in $T'$ is $\leq r^{\overrightarrow{\height}\left(\psi\right)}$. Any set of cosets of the edge group corresponds to a set of edges in $T$. The intersection of the corresponding conjugates point-wise stabilizes those edges, and thus point-wise stabilizes the smallest tree $T'$ containing them. Hence the number of cosets is bounded by $\leq r^{\overrightarrow{\height}\left(\psi\right)}$. 
\end{proof}
\begin{lem}\label{lem}\label{lem:nested domains}
Let $H$ be a subgraph of $F$. Let $\psi:H\rightarrow F$ be a cellular immersion with $\overrightarrow{\height}\left(\psi\right)=\ell<\infty$. Then $D_\infty$ is a (possibly empty) forest.
\end{lem}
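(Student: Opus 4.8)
The statement is essentially a bookkeeping consequence of the machinery set up in Definition~\ref{defn:combinatorial}, so the plan is short and I do not expect to meet a genuine obstacle.

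\emph{Step 1: translate the hypothesis.} By the very definition of directed height, $\overrightarrow{\height}\left(\psi\right)=\ell<\infty$ says exactly that $\psi^{-\ell}\left(H\right)$ is a forest, i.e.\ is homeomorphic to a disjoint union of trees. Recall from Definition~\ref{defn:combinatorial} that $\psi^{-\ell}\left(H\right)=Z_{\ell+1}$, and that its largest subgraph is the combinatorial domain $D_{\ell+1}$. I would then invoke the observation recorded in Definition~\ref{defn:combinatorial} that $Z_{\ell+1}$ fails to be homeomorphic to a forest precisely when some component of $D_{\ell+1}$ contains an embedded circle; contrapositively, since $Z_{\ell+1}$ \emph{is} homeomorphic to a forest, no component of $D_{\ell+1}$ contains an embedded circle, so $D_{\ell+1}$ is a forest.

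\emph{Step 2: push the conclusion down to $D_\infty$.} Every edge and vertex of $D_\infty$ maps into $H$ under all powers of $\psi$; in particular $\psi^{\ell}$ is defined on all of $D_\infty$ with image in $H$, whence $D_\infty\subseteq \psi^{-\ell}\left(H\right)$, and being a subgraph, $D_\infty$ is contained in the largest subgraph $D_{\ell+1}$ of $\psi^{-\ell}\left(H\right)$ (this is also just the inclusion $D_\infty\subseteq D_{i+1}$ with $i=\ell$ noted right after the definition of $D_\infty$). A subgraph of a forest has no embedded circles, hence is itself a forest unless it is empty; therefore $D_\infty$ is a (possibly empty) forest, which is the claim.

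\emph{Where the care goes.} The only point deserving an explicit sentence is the passage from ``$\psi^{-\ell}\left(H\right)$ is homeomorphic to a forest'' to ``$D_{\ell+1}$ is a forest,'' namely that the dangling closed intervals and isolated points that $\psi^{-\ell}\left(H\right)$ may carry outside its maximal subgraph cannot conceal a circle; but this is exactly the remark built into Definition~\ref{defn:combinatorial}. Everything else is immediate from the nesting $D_\infty\subseteq D_{i+1}\subseteq D_i$ and the fact that subgraphs of forests are forests, so I would not expect this lemma to require more than a few lines.
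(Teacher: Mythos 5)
Your proposal is correct and is essentially the paper's argument, phrased directly rather than by contradiction: both reduce to showing $D_{\ell+1}$ has no embedded circle because any such circle would lie in $\psi^{-\ell}\left(H\right)=Z_{\ell+1}$, which is a forest by hypothesis, and then conclude via $D_\infty\subseteq D_{\ell+1}$. One small inaccuracy worth flagging: the remark in Definition~\ref{defn:combinatorial} only states the nontrivial implication ($Z_i$ not homeomorphic to a forest $\Rightarrow$ some component of $D_i$ is not a tree), not the biconditional you attribute to it; fortunately the direction you actually use is the trivial one, following immediately from the inclusion $D_{\ell+1}\subseteq Z_{\ell+1}$, which is exactly how the paper argues.
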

\begin{proof} 
We have $D_\infty\subseteq D_{\ell+1}$. So, it suffices to show that $D_{\ell+1}$ is a forest. Suppose $C\subset D_{\ell+1}$ is an embedded circle. Then $\psi^\ell\left(C\right)\subset H$ and so $\psi^{-\ell}\left(H\right)$ is not a forest, contradicting the assumption.
\end{proof}
\begin{lem}\label{lem:infinite height give zer euler}
Let $F$ be a \textcolor{black}{connected} graph and let $ H\subset F$ be a finite subgraph. Let $X$ be the mapping torus of a cellular immersion $\ \psi:H\rightarrow F$. If $\psi$ has infinite directed height, then $H$ contains a connected subgraph $D\subset H$ with $\chi\left(D\right)\ \textcolor{black}{\leq}\ 0$ such that $\psi\left(D\right)= D$. Consequently, $X$ contains a subcomplex $Y\hookrightarrow X$, where $Y$ is a connected, compact, and collapsed $2$-complex with no isolated edges and $\chi\left(Y\right)=0$.
\end{lem}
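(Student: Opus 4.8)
The plan is to convert the failure of finite directed height into a genuinely $\psi$-invariant, leafless subgraph $D$ of $H$ carrying an embedded circle, and then to take the mapping torus of $\psi|_D$ as the desired $Y$. First I would produce a $\psi$-invariant non-forest inside $H$. Recall the combinatorial domains $D_0 = F \supseteq D_1 = H \supseteq D_2 \supseteq \cdots$ from Definition~\ref{defn:combinatorial}: for $i \ge 1$ these are subgraphs of the finite graph $H$, and the chain is nested, so it stabilizes; since an edge or vertex lies in every $D_i$ exactly when all of its $\psi$-iterates remain in $H$, the stable value is $D_\infty$. As $\overrightarrow{\height}(\psi) = \infty$, none of the sets $Z_{i+1} = \psi^{-i}(H)$ is a forest, so by the observation in Definition~\ref{defn:combinatorial} every $D_i$ with $i \ge 1$ has a component that is not a tree; passing to the stable value, $D_\infty$ has a non-tree component, i.e.\ it contains an embedded circle. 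Moreover $\psi(D_\infty) \subseteq D_\infty$, since $\psi(x)$ has all of its $\psi$-iterates in $H$ whenever $x$ does. I expect this to be the main obstacle: it is a converse of Lemma~\ref{lem:nested domains}, and it is the step where infinite height --- rather than merely arbitrarily long directed segments with infinite stabilizer in the Bass-Serre tree --- gets upgraded to a genuine self-conjugation.

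Next I would arrange for $\psi$ to restrict to an honest self-surjection. Let $E_0$ be the core of $D_\infty$, obtained by iteratively deleting vertices of degree $\le 1$ together with their incident edges; then $E_0$ is leafless, has no isolated vertices, and still carries a circle, and since a graph immersion sends leafless, isolated-vertex-free graphs to leafless, isolated-vertex-free graphs, $\psi(E_0)$ is such a subgraph of $\psi(D_\infty) \subseteq D_\infty$ and hence $\psi(E_0) \subseteq E_0$. The nested chain $E_0 \supseteq \psi(E_0) \supseteq \psi^2(E_0) \supseteq \cdots$ of subgraphs of the finite graph $E_0$ therefore stabilizes to a subgraph $E$ with $\psi(E) = E$; being an immersed image of $E_0$, $E$ is again leafless and isolated-vertex-free, and it still carries a circle because graph immersions are $\pi_1$-injective, so the $\psi$-image of a circle-carrying graph carries a circle. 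I would then let $D$ be the union of one $\psi$-orbit of components of $E$; since $\psi(E) = E$, the map $\psi$ permutes the finitely many components of $E$ bijectively, so this orbit is finite and is cyclically permuted by $\psi$. Then $D \subseteq D_\infty \subseteq H$, $\psi(D) = D$, $\psi$ cyclically permutes the components of $D$, and each component of $D$ is a connected finite graph of minimum degree $\ge 2$, hence of non-positive Euler characteristic; therefore $\chi(D) \le 0$. This gives the first assertion, with $D$ honestly connected exactly when $D_\infty$, equivalently $H$, is connected --- which covers the paper's applications.

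Finally, for the consequence I would set $Y = M(\psi|_D)$, the mapping torus of $\psi|_D \colon D \to D$ of Definition~\ref{defn:mappingtorus}. Since $D \subseteq H$ and $\psi(D) = D \subseteq F$, this $Y$ embeds in $X = M(\psi)$ as a sub-$2$-complex; it is finite, hence compact, and $\chi(Y) = \chi(D) - \chi(D) = 0$. It is connected because $\psi|_D$ cycles the components of $D$, and $\pi_1 Y$ is nontrivial since $D$ carries a circle (which is what makes $Y$ useful downstream). It has no isolated edges: each vertical edge of $D$ lies on the $2$-cell it determines, and each horizontal edge over a vertex $v$ of $D$ lies on the $2$-cell of some edge incident to $v$, which exists because $D$ has no isolated vertex. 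And it is collapsed: a horizontal edge over $v$ would be a free face only if $\deg_D(v) = 1$, which is excluded since $D$ is leafless; a vertical edge $e$ would be a free face only if it met no incoming edge-space, which is excluded since $\psi(D) = D$ is onto, so $e$ is traversed by $\psi(e')$ for some edge $e'$ of $D$ and hence also lies on an incoming boundary. Beyond the first step, this is the routine graph-of-spaces bookkeeping of Remark~\ref{rem:common-claims}, carried out with the customary care about graph immersions and cores.
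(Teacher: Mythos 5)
Your proof follows the same route as the paper: pass from infinite directed height to the stable combinatorial domain $D_\infty$, locate a leafless $\psi$-invariant subgraph of it carrying a circle, and take its mapping torus as $Y$. You are in fact more careful than the published argument in two places, both of which you correctly flag. First, the paper asserts that the stabilized $D_j$ has a component $D$ with $\psi(D)=D$; this does not follow merely from $D_{j+1}=D_j$, since $\psi|_{D_\infty}$ need not be onto and $\psi$ may cyclically permute several circle-carrying components (for instance, $\psi$ might swap two disjoint loops of $H$). Your extra stabilization of the chain $E_0 \supseteq \psi(E_0) \supseteq \cdots$ followed by taking a full $\psi$-orbit of components of $E$ is the right repair. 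Second, you rightly note that when that orbit has length greater than one, no \emph{connected} $\psi$-invariant $D$ of nonpositive Euler characteristic need exist, so the first sentence of the lemma is slightly too strong as stated; but the mapping torus of the whole orbit is still a connected, compact, collapsed subcomplex with $\chi = 0$ and nontrivial $\pi_1$, which is all that Theorem~\ref{thm:main coherence theorem} uses. One small slip: your parenthetical that $D$ is connected ``exactly when $D_\infty$, equivalently $H$, is connected'' is not right --- $H$ connected does not force $D_\infty$ connected --- but nothing downstream in your argument relies on that claim.
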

\begin{proof}
Since $\psi$ has infinite directed height, for each $i\geq 0$, we have $\psi^{-i}\left(H\right)$ is not a forest. So each ${D}_i$ contains an embedded circle. Since $H$ is finite and $D_{i+1}\subseteq D_i$, there is an integer $p$ such that for all $j>p$ we have $D_ {j+1}=D_j$. \textcolor{black}{Then $D_j$ contains a component $D$ with $\chi\left(D\right)\leq 0$ and  $\psi\left(D\right)= D$. In particular, since $\psi$ is an immersion, $\psi\left(D_{\text{core}}\right)=D_{\text{core}}$, where $D_{\text{core}}$ is the core of $D$}. The mapping torus of $\psi$ restricted to $D_{\text{core}}$ provides $Y$.
\end{proof}
\begin{defn}\label{defn:ladders and fans} Let $Q\rightarrow \Gamma_Q$ be a graph of spaces where $\Gamma_Q$ is equal to a subdivided interval $[0,k]$ directed from $0$ to $\textcolor{black}{1\ \leq}\ k\ \textcolor{black}{\leq \infty}$. Suppose each vertex-space $Q_{v_i}$ is a tree where $Q_{v_0}$ has exactly one edge $f_0$. For each edge-space $Q_{e_i}\times I$ there is an \textit{outgoing} attaching map $Q_{e_i}\times \left\{0\right\}\rightarrow Q_{v_{i-1}}$ and an \textit{incoming} attaching map $Q_{e_i}\times \left\{1\right\}\rightarrow Q_{v_i}$. When each outgoing attaching map is an embedding onto a single edge $f$ of the vertex-space, then $Q$ is a \textit{ladder} and $f$ is a \textit{connecting edge}. When each attaching map is \textcolor{black}{bijective}, then $Q$ is a \textit{fan}. The \textit{rim} of a fan $Q$, denoted by $\rim\left(Q\right)$, is $Q_{v_k}$. The \textit{length} of $Q$ is $\length\left(Q\right)=k$. \textcolor{black}{We allow the case $k=\infty$ and say that $Q$ is an \textit{infinite ladder/fan}}. We say $Q$ \textit{arises} from $f_0$, and $f_0$ \textit{gives rise} to $Q$.

The space $Q$ is a cell complex as follows: we have already declared each $Q_{v_i}$ is a tree and so it remains to describe the additional $1$-cells and $2$-cells of $Q$. Each open edge-space $Q_{e_i}\times (0,1)$ has a product structure induced by the graph $Q_{e_i}$. See Figure~\ref{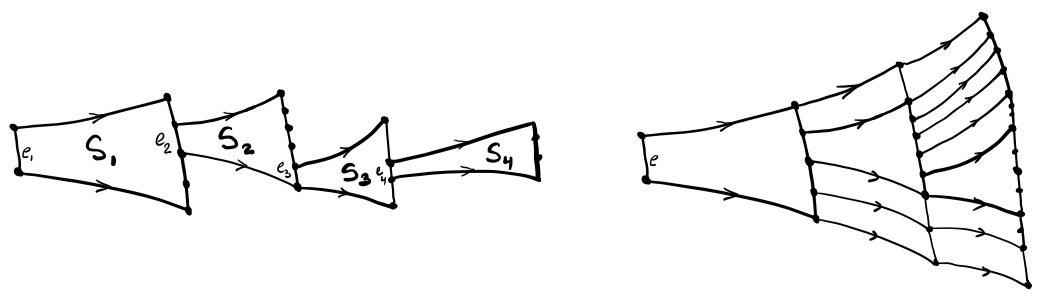}.
\begin{figure}\centering
\includegraphics[width=.6\textwidth]{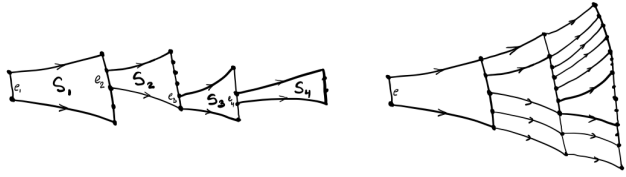}
\caption[]{\label{image2.png}
Left: A ladder of length $4$ emerging from $e_1$. Right: A fan of length $3$ emerging from $e$.}
\end{figure} 
The edges in the vertex-spaces are \textit{vertical} and the remaining ones are \textit{horizontal}. Each vertex in the image of $Q_{e_i}\rightarrow Q_{v_{i-1}}$ \textit{gives rise} to a horizontal edge in $Q$. Each edge $f$ in the image of $Q_{e_i}\rightarrow Q_{v_{i-1}}$ \textit{gives rise} to a $2$-cell $S\subset Q$. We say $S$ \textit{arises} from $f$.  

Let $X$ be a \textcolor{black}{$2$-}complex with a graph-of-spaces structure whose $1$-skeleton is partitioned into horizontal and vertical edges, \textcolor{black}{where the vertical edges are the edges of vertex-spaces, and the horizontal edges are the remaining ones}. An \textit{immersed ladder} of $X$ is a combinatorial immersion $\lambda:L\rightarrow X$ that maps vertical/horizontal edges of a ladder $L$ to vertical/horizontal edges of $X$. An \textit{immersed fan} $\phi: Q\rightarrow X$ is defined analogously. An edge $e\subset X$ has a $k$-ladder (resp. $k$-fan), if there is an immersed ladder $\lambda:L\rightarrow X$ (resp. immersed fan $\phi:Q\rightarrow X$) of length $k$ emerging from $e'$ such that $\lambda\left(e'\right)=e$ (resp. $\phi\left(e'\right)=e$). When $X$ is the mapping torus of $\psi:H\rightarrow F$, we require that immersions preserve the orientation of horizontal edges. 

\textcolor{black}{Let $X$ be the mapping torus of $\psi: H\rightarrow F$}. Let $H_i=\overline{D_i-D_{i+1}}$ be the subgraph whose edges give rise to $i$-fans but not $(i+1)$-fans. When $H_i=\emptyset$, we have $D_i=D_{i+1}=D_\infty$ is the subgraph whose edges give rise to infinite fans. Then $D_\infty$ is $\psi$-invariant. Let $m=m(\psi)$ denote the supremum of lengths of maximal finite fans in $X$. Note that when $H$ is finite we have $m<\infty$ \textcolor{black}{since any maximal finite fan is determined by the edge it arises from.}
\end{defn}
\begin{lem}\label{lem:fans}
Let $H$ be a subgraph of a finite \textcolor{black}{connected} graph $F$. Let $X$ be the mapping torus of a cellular immersion $\psi: H\rightarrow F$ with $\overrightarrow{\height}\left(\psi\right)<\infty$. Let $m=m\left(\psi\right)$ be the maximal length of \textcolor{black}{immersed} finite fans in $X$. Let $Y\rightarrow X$ be a combinatorial immersion, where $Y$ is a nontrivial, compact, connected, and collapsed $2$-complex with no isolated edges. Let $Y\rightarrow \Gamma_Y$ be the induced graph-of-spaces decomposition and let $\partial Y$ be the associated boundary. Then there exists $M\textcolor{black}{=M\left(H, F, \psi\right)}>0$ such that each $2$-cell $S$ of $Y$ lies in \textcolor{black}{the image of an immersed} ladder $\lambda:L\rightarrow Y$ with $\length\left(L\right)\leq m+1$ emerging from $e$ where $\dist\left(\lambda\left(e\right),\partial Y\right)\leq M$.
\end{lem}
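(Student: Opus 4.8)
The plan is to attach to a given $2$-cell $S$ of $Y$ an immersed ladder through $S$, built by repeatedly descending to the $2$-cell lying immediately below $S$ and stopping once the outgoing edge of the current $2$-cell comes within a fixed distance $M=M(H,F,\psi)$ of $\partial Y$. The length bound $m+1$ will then follow from the definition of $m=m(\psi)$ as the largest length of a finite fan in $X$, once the finite-directed-height hypothesis has been used to control the $2$-cells of $Y$ lying over $D_\infty$. I expect this last ingredient to be the main obstacle.

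First I would fix a $2$-cell $S$. By Remark~\ref{rem:common-claims}.\eqref{remark:07} it corresponds to an edge $g^{(0)}$ of $Y_e$, namely its outgoing side, which by Remark~\ref{rem:common-claims}.\eqref{remark:01} is a single edge of $Y_v$. By Remark~\ref{rem:common-claims}.\eqref{remark:02} we have $g^{(0)}\in\image(\Psi)$, so $g^{(0)}\subseteq\Psi(g^{(1)})$ for some edge $g^{(1)}$ of $Y_e$, and the $2$-cell over $g^{(1)}$ carries $g^{(0)}$ on its incoming side. Iterating gives edges $g^{(0)},g^{(1)},g^{(2)},\dots$ of $Y_e$ with $g^{(i)}\subseteq\Psi(g^{(i+1)})$, and these assemble, read from the top down, into an immersed ladder $\lambda:L\to Y$ whose top rung is $S$ and whose base edge maps to the final $g^{(k)}$. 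I stop the descent at the first $k$ with $\dist(g^{(k)},\partial Y)\le M$, producing a ladder of length $k+1$ that contains $S$ and emerges from $g^{(k)}$; it then suffices to prove $k\le m$.

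Suppose instead that $k>m$, so that $g^{(0)},\dots,g^{(m)}$ all lie at distance $>M$ from $\partial Y$. The crucial point is that distance from $\partial Y$ keeps iterated $\Psi$-images inside $Y_e$: arguing by induction on $i$, $\Psi^{i}(g^{(m)})$ is a connected edge-path of length $\le\lVert\psi\rVert^{i}$ containing the edge $g^{(m-i)}\in Y_e$, so if it also contained an edge of $Y_v\smallsetminus Y_e$ it would have a vertex incident both to a $Y_e$-edge and to an edge of $Y_v\smallsetminus Y_e$ — i.e.\ a vertex of $\partial Y$ within distance $\lVert\psi\rVert^{i}$ of $g^{(m-i)}$. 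Taking $M\ge\lVert\psi\rVert^{m}$ thus forces $\Psi^{i}(g^{(m)})\subseteq Y_e$ for $0\le i\le m$, and projecting through the commuting square of Remark~\ref{rem:common-claims} gives $\psi^{i}(\bar g^{(m)})\subseteq H$ for $0\le i\le m$. Hence $\bar g^{(m)}$ is the base of an immersed fan of length $m+1$ in $X$. If this fan extends to a maximal finite fan, that fan has length $>m$, contradicting the choice of $m$, so $\bar g^{(m)}\in D_\infty$.

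Ruling out $\bar g^{(m)}\in D_\infty$ is where finite directed height enters. By Lemma~\ref{lem:nested domains}, $D_\infty$ is a forest that $\psi$ maps into itself, so the subcomplex of $Y$ lying over the mapping torus of $\psi|_{D_\infty}$ is, locally, a mapping torus of an immersion between forests; using compactness of $Y$ one finds inside it a periodic subcomplex $Z$ with $\chi(Z)=0$, and $Z$ has free faces at the leaves of these forests, as in Remark~\ref{rem:common-claims}.\eqref{remark:03}. Since $Y$ is collapsed with no isolated edges, those free faces must be capped by further $2$-cells of $Y$ — which, one checks, must lie over edges outside $D_\infty$ — and because the forest components involved have at most $|H|_1$ edges, this should force a vertex of $\partial Y$ into an $(H,F,\psi)$-bounded neighbourhood of any $2$-cell of $Y$ over $D_\infty$, in particular of $g^{(m)}$. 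Enlarging $M$ past that bound contradicts $\dist(g^{(m)},\partial Y)>M$; the same enlargement applied to a hypothetical repetition $g^{(i)}=g^{(j)}$ (which would yield an infinite periodic fan over $D_\infty$) shows the $g^{(i)}$ are distinct, so $\lambda$ is genuinely an immersion. This gives $k\le m$ and $\length(L)\le m+1$. The delicate step — the main obstacle — is precisely this conversion of ``$Y$ collapsed with no isolated edges'' into a nearby vertex of $\partial Y$; I would model it on the proofs of Lemma~\ref{lem:207} and Lemma~\ref{lem:infinite height give zer euler}.
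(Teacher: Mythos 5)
Your proposal is structurally very close to the paper's proof: you fix $S$, descend through $2$-cells to build a ladder, and aim to show that within $m$ steps the descent must land on an edge near $\partial Y$; your dichotomy (``the descent reaches $\partial Y$'' vs.\ ``$\bar g^{(m)}\in D_\infty$'') parallels the paper's Case~1/Case~2 split on the length of the maximal fan emerging from $\lambda(e_1)$. The argument up through $\bar g^{(m)}\in D_\infty$ is essentially sound and matches the spirit of the paper (the paper instead observes that a fan of length $>m$ in $Y$ projects to an infinite fan in $X$, whence $\lambda(e_1)$ maps into $D_\infty$).

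The genuine gap, which you flag yourself, is the decisive step: converting $\bar g^{(m)}\in D_\infty$ into a vertex of $\partial Y$ within an $(H,F,\psi)$-bounded neighbourhood of $g^{(m)}$. The digression about finding a ``periodic subcomplex $Z$ with $\chi(Z)=0$'' is a red herring --- no such $Z$ is constructed or needed, and a $\chi=0$ subcomplex of $Y$ would contradict negative immersions, so it is not the right object to look for here. The paper's actual mechanism is both sharper and more local: let $T\subset O_{v_1}$ be the \emph{maximal} connected subgraph containing $\lambda(e_1)$ whose edges give rise to $(m+1)$-fans in $Y$. Each such edge maps into $D_\infty$, so $T$ immerses in $D_\infty$; since $D_\infty$ is a forest (Lemma~\ref{lem:nested domains}), $T$ is a tree with $\diam(T)\leq\diam(D_\infty)$. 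At a leaf $u$ of $T$ there is an edge $f\subset O_{v_1}\setminus T$ because outgoing edge-spaces of a collapsed $Y$ have no leaves (Remark~\ref{rem:common-claims}.\eqref{remark:03}), and by \emph{maximality} of $T$ the fan from $f$ has length $\leq m$, so its rim meets $\partial Y$ within $\lVert\psi\rVert^m$. Combining with $\diam(T)\leq\diam(D_\infty)$ yields $M=\diam(D_\infty)+\lVert\psi\rVert^m$. Your sketch of ``free faces of the subcomplex over $D_\infty$ being capped'' is pointing in the right direction, but it never isolates the maximal tree $T$ nor the crucial maximality property giving a bounded-length fan at its leaf, so the quantitative bound is not actually produced. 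Also, the worry about repeated $g^{(i)}$ spoiling immersivity is unnecessary: the ladder $L$ is an abstract $2$-complex and $\lambda$ is an immersion as long as it is locally injective, which holds regardless of whether distinct $g^{(i)}$ coincide in $Y$.
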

\begin{proof} Let $S$ be a $2$-cell of $Y$. Since $Y$ is collapsed, there is an immersed ladder $\lambda:L\rightarrow Y$ with $\length\left(L\right)= m+1$ and whose $(m+1)$-th $2$-cell maps to $S$. Let $\left\{e_1,\ldots,e_{m+1}\right\}$ be the connecting edges of $L$. For $1\leq i\leq m+1$, let $O_{v_i}$ be the outgoing edge-space containing $\lambda\left(e_i\right)$ and let $Y_{v_i}$ be the vertex-space containing $O_{v_i}$. Let $\phi:Q\rightarrow Y$ be the maximal immersed fan emerging from $\lambda\left(e_1\right)$. See Figure~\ref{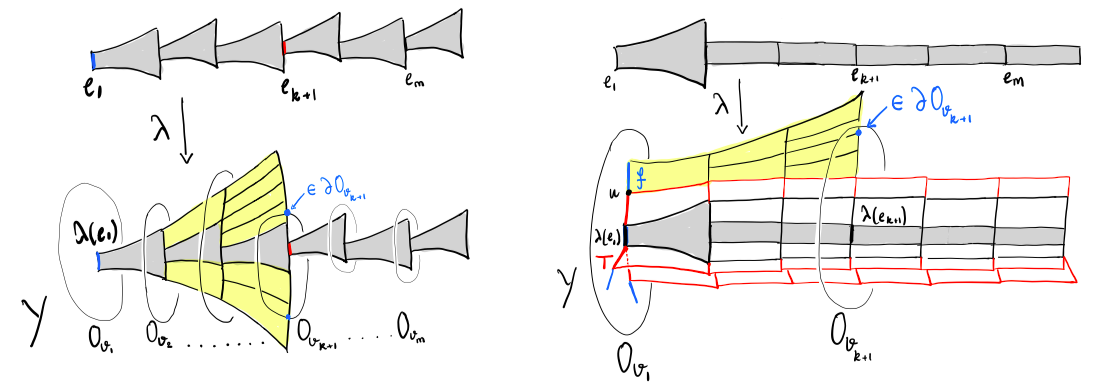}. 
\begin{figure}\centering
\includegraphics[width=.9\textwidth]{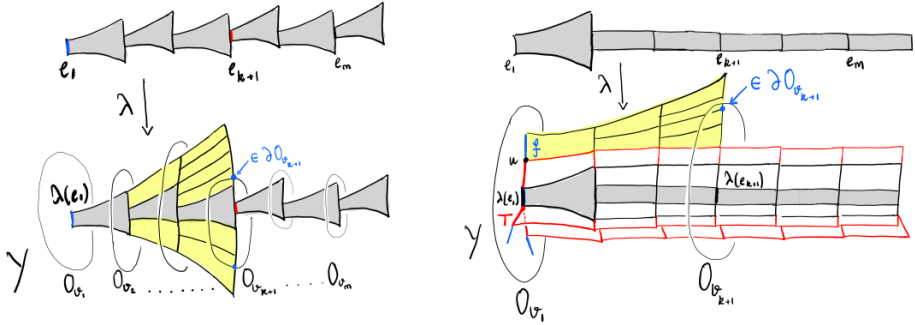}
\caption[]{\label{image7.png}
On the left: Case $1$, and on the right case $2$.}
\end{figure}

\textbf{Case 1}: If $\length\left(Q\right)=k\leq m$, then $\lambda\left(e_{k+1}\right)\ \subset \ \phi\left(\rim\left(Q\right)\right)\cap O_{v_{k+1}}$. Since $Q$ is maximal, $\image\left(\rim\left(Q\right)\rightarrow Y_{v_{k+1}}\right)\ \not\subset\ O_{v_{k+1}}$, and so $\phi\left(\rim\left(Q\right)\right)\ \cap\ \partial O_{v_{k+1}}\neq \emptyset$. Since fans in $Y$ project to fans in $X$, we have $|\rim\left(Q\right)|_1\leq \lVert \psi \rVert^m$. Thus, $\dist\left(\lambda\left(e_{k+1}\right),\partial Y\right)\leq \lVert \psi \rVert^m$.

\textbf{Case 2}: If $\length\left(Q\right)> m$, then the image of $Q\rightarrow Y\rightarrow X$ \textcolor{black}{is} an infinite fan of $X$. Let $T\subset O_{v_1}$ be the maximal connected subgraph containing $\lambda\left(e_1\right)$ and whose edges give rise to $(m+1)$-fans in $Y$. Hence $T$ immerses in $D_\infty$. \textcolor{black}{Since $\overrightarrow{\height}\left(\psi\right)<\infty$, it follows from  Lemma~\ref{lem:nested domains} that $D_\infty$ is a forest. So} $T$ is a tree with $\diam\left(T\right)\leq \diam\left(D_\infty\right)$. Let $u\in T$ be a leaf. Since $Y$ is collapsed, outgoing edge-spaces have no leaves. So there is an edge $f\subset O_{v_1}$ containing $u$ with $f\not\subset T$. By maximality of $T$, the maximal fan $\phi'\left(Q'\right)$ emerging from $f$ has length $k\leq m$. So $\phi'\left(\rim\left(Q'\right)\right)\ \cap\ \partial O_{v_{k+1}}\neq \emptyset$. Hence, $\dist\left(\lambda\left(e_{k+1}\right),\partial Y\right)\leq \diam\left(D_\infty\right)+\lVert \psi \rVert^m$.

The claim follows with $M=\diam\left(D_\infty\right)+\lVert \psi \rVert^m$.
\end{proof}
\begin{thm}\label{thm:main coherence theorem}
Let $F$ be a finite \textcolor{black}{connected} graph and let $H\subset F$ be a subgraph. Let $X$ be the mapping torus of a cellular immersion $\psi:H\rightarrow F$. Then $X$ has negative immersions if and only if $\psi$ has finite directed height.
\end{thm}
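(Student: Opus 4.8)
The plan is to prove the two implications separately. The direction ``negative immersions $\Rightarrow$ finite directed height'' is the quick one and follows from Lemma~\ref{lem:infinite height give zer euler}, while the converse is the substantive direction and combines the ladder/fan analysis of Lemma~\ref{lem:fans} with the Euler characteristic estimate of Lemma~\ref{lem:209}.

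For the first implication I would argue contrapositively. Supposing $\psi$ has infinite directed height, Lemma~\ref{lem:infinite height give zer euler} produces a subcomplex $Y\hookrightarrow X$ that is connected, compact, collapsed, has no isolated edges, and has $\chi(Y)=0$; concretely $Y$ is the mapping torus of the restriction of $\psi$ to the core $D_{\text{core}}$ of a $\psi$-invariant subgraph $D$ with $\chi(D)\le 0$. Since $D_{\text{core}}$ contains an embedded circle, it has nontrivial fundamental group, and as it sits inside $Y$ as a vertex-space of the induced graph-of-spaces decomposition, $\pi_1Y$ is nontrivial; moreover $Y$ has at least one $2$-cell. The inclusion $Y\hookrightarrow X$ is a combinatorial immersion, so $Y$ is an admissible test object for negative immersions, yet $\chi(Y)=0>-c|Y|_2$ for every $c>0$. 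Hence $X$ does not have negative immersions.

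For the converse, assume $\overrightarrow{\height}(\psi)=\ell<\infty$; since $H$ is finite, the maximal length $m=m(\psi)$ of an immersed finite fan in $X$ is finite. Let $Y\to X$ be any combinatorial immersion with $Y$ nontrivial, compact, connected, collapsed, and with no isolated edges, and pass to the induced data $Y\to\Gamma_Y$, $\Psi\colon Y_e\to Y_v$, and the boundary $\partial Y$ of Remark~\ref{rem:common-claims}. By Remark~\ref{rem:common-claims}, the graph $Y_v$ is finite, leafless, and has no trivial component (indeed no tree component), and $Y_e\subseteq Y_v$ via the outgoing edge-spaces; so applying Lemma~\ref{lem:209} to the subgraph $Y_e\subseteq Y_v$, together with $\chi(Y)=\chi(Y_v)-\chi(Y_e)$, gives
\[
\chi(Y)\ \le\ -\tfrac12\,|\partial Y|_0 .
\]
It then remains to bound $|Y|_2$ linearly in $|\partial Y|_0$. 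By Lemma~\ref{lem:fans} there is a constant $M=M(F,H,\psi)$ such that every $2$-cell of $Y$ lies in an immersed ladder of length $\le m+1$ emerging from an edge at distance $\le M$ from $\partial Y$; since such a ladder is connected and its vertex-spaces are trees with at most $\lVert\psi\rVert^{m+1}$ edges, it has diameter bounded in terms of $F,H,\psi$, and therefore every $2$-cell of $Y$ has a vertex within some constant distance $M'=M'(F,H,\psi)$ of $\partial Y$. Because $Y\to X$ is a combinatorial immersion into the finite complex $X$, the link of each vertex of $Y$ injects into a link of $X$, so $Y$ has uniformly bounded valence and each vertex of $Y$ lies in boundedly many $2$-cells; hence a ball of radius $M'$ in $Y$ meets at most some constant $N=N(X,M')$ many $2$-cells. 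Assigning to each $2$-cell a boundary vertex within distance $M'$ of it then yields $|Y|_2\le N\,|\partial Y|_0$, whence $\chi(Y)\le-\tfrac{1}{2N}|Y|_2$, so $c=\tfrac{1}{2N}$ witnesses negative immersions.

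I expect the main obstacle to lie in the converse, precisely in the passage from ``every $2$-cell lies near $\partial Y$'' to the uniform inequality $|Y|_2\le N|\partial Y|_0$ with $N$ depending only on $X$ and not on $Y$. Two points underlie it: first, that finite directed height forces $D_\infty$ to be a forest (Lemma~\ref{lem:nested domains}), which is exactly what makes the constant $M$ in Lemma~\ref{lem:fans} depend only on $F,H,\psi$; and second, the soft remark that a combinatorial immersion into a finite complex has uniformly bounded local geometry, upgrading ``bounded distance to $\partial Y$'' into ``boundedly many $2$-cells per boundary vertex.'' A secondary point requiring care is checking that $Y_v$ genuinely satisfies the hypotheses of Lemma~\ref{lem:209} (leafless, no trivial or tree components) and that $\partial Y$ agrees with the subgraph boundary of $Y_e$ in $Y_v$, both of which are supplied by Remark~\ref{rem:common-claims}.
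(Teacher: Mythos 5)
Your proof is correct and follows essentially the same route as the paper: the ``only if'' direction is Lemma~\ref{lem:infinite height give zer euler}, and the ``if'' direction combines Lemma~\ref{lem:fans} with Lemma~\ref{lem:209} to get $\chi(Y)\le-\tfrac12|\partial Y|_0\le -c|Y|_2$. The one cosmetic difference is in how you derive $|Y|_2\lesssim|\partial Y|_0$ — the paper counts edges in $M$-balls of the vertex-spaces and then multiplies by the number of ladders per edge and $2$-cells per ladder, while you pull each $2$-cell to within a fixed distance of $\partial Y$ in the $1$-skeleton and invoke bounded valence; both work, though your intermediate assertion that the ladder's vertex-spaces have at most $\lVert\psi\rVert^{m+1}$ edges is not quite what the definition of a ladder guarantees, and the cleaner justification is that consecutive $2$-cells of the ladder share a connecting edge and each has boundary circle of length bounded by $X$, so the ladder's $2$-cells lie within diameter $O((m+1)\cdot\max|\partial S|)$ of the emerging edge.
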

\begin{proof}
The ``only if'' direction holds by Lemma~\ref{lem:infinite height give zer euler}.

Suppose $\overrightarrow{\height}\left(\psi\right)<\infty$. Let $Y\rightarrow X$ be a combinatorial immersion where $Y$ is a nontrivial compact, connected, and collapsed $2$-complex with no isolated edges. Let $Y\rightarrow \Gamma_Y$ be the induced graph-of-spaces decomposition. For each $v\in \Gamma_Y^{0}$, let $Y_v$ be the corresponding vertex-space and let $O_v$ be the disjoint union of outgoing edge-spaces in $Y_v$. Let $m=m\left(\psi\right)$ be the supremal length of maximal finite fans in $X$. By Lemma~\ref{lem:fans}, there exists $M>0$ such that each $2$-cell of $Y$ lies in a ladder of length $\leq m+1$ emerging from a vertical edge $e$ 
with $\dist\left(e,\partial Y\right)\leq M$. Let $\partial' Y$ be the set of boundary points of $Y$ that are at a distance $\leq M$ from \textcolor{black}{such} edges $e$. So $\partial' Y\subseteq \partial Y=\displaystyle\bigsqcup_{v\in \Gamma_Y^{0}}\partial O_v$. Since $Y\rightarrow X$ is a combinatorial immersion, there is an upper bound $N$ on the number of edges in an $M$-ball in the vertex-spaces of $Y$. Note that $N=N\left(F,M\right)$ is a function of $F$ and $M$. Consider the $M$-balls centered at vertices of $\partial' Y$. In each such ball, there are at most $N$ edges and each edge gives rise to at most $\lVert \psi \rVert^{m}$ ladders of length $\leq\ (m+1)$. The number of $2$-cells in each ladder is $\leq\ (m+1)$. Then:
$$|Y|_2\ \leq\ \displaystyle\sum_{v\in \partial' Y}(m+1)\lVert \psi \rVert^m N\ =\ (m+1)\lVert \psi \rVert^m N|\partial' Y|_0\ \leq\ (m+1)\lVert \psi \rVert^m N|\partial Y|_0$$
and so $$\dfrac{|Y|_2}{(m+1)\lVert \psi \rVert^m N}\ \leq\ |\partial Y|_0$$


By Remark~\ref{rem:common-claims}.\eqref{remark:05.25}, the vertex-spaces of $Y$ have no leaves. Then the conclusion holds by the following double inequality. Its first equality is straightforward. Its last inequality follows from above, and its middle inequality holds by Lemma~\ref{lem:209}. 
\begin{equation*}
\chi\left(Y\right)\ =\ \displaystyle\sum_{v\in \Gamma_Y^{0}}\left(\chi\left(Y_v\right)-\chi\left(O_v\right)\right)
\ \leq\ \dfrac{-1}{2}|\partial Y|_0\ \leq\ \dfrac{-1}{2(m+1)\lVert \psi \rVert^m N}|Y|_2 \qedhere
\end{equation*}
\end{proof}
\begin{defn}\label{defn:connection to irreduciible}
Let $\mathcal{F}$ be a free group. 
There is a natural generalization of fully irreducible endomorphisms of free groups to fully irreducible partial endomorphisms. 
 A partial endomorphism $\psi: \mathcal{H}\rightarrow \mathcal{F}$ is \textit{fully irreducible} if there does not exist $n>0$, a proper free factor $\mathcal{H}'\subset \mathcal{H}$, and $g\in \mathcal{F}$ such that $\psi^n\left(\mathcal{H}'\right)\subset g^{-1}\mathcal{H}'g$. See Definition~\ref{defn:composition} for the notion of \textit{generalized composition} explaining $\psi^n$. 
The standard notion of fully irreducible endomorphism focuses on the case where $\mathcal{H}=\mathcal{F}$ \cite{MR1147956}. 

\textcolor{black}{The \textit{standard $2$-complex} associated to a presentation of a group $G$ is a 2-dimensional cell complex formed by a single vertex, one circle at the vertex for each generator of $G$, and a $2$-cell for each relation in the presentation. The attaching maps of the $2$-cells are determined by the presentation.}
\end{defn}
In the language of Definition~\ref{defn:connection to irreduciible}, our result  shows  the following:
\begin{thm}\label{thm:irreducible is finite height}
Let $\mathcal{H}$ be a proper free factor of a finitely generated free group $\mathcal{F}$, and let $\psi:\mathcal{H}\rightarrow \mathcal{F}$ be a monomorphism. Let $X$ be the standard $2$-complex of the HNN extension of $\mathcal{F}$ with respect to $\psi$. Then $X$ has negative immersions if and only if $\psi$ is fully irreducible.  
\end{thm}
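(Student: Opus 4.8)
The plan is to reduce Theorem~\ref{thm:irreducible is finite height} to Theorem~\ref{thm:main coherence theorem} by recognizing ``fully irreducible'' as a reformulation of ``finite directed height''. First I would set things up geometrically: since $\mathcal{H}$ is a free factor of $\mathcal{F}$, choose a free basis of $\mathcal{H}$, extend it to a free basis of $\mathcal{F}$, and realize $\mathcal{F}=\pi_1F$ for a rose $F$ with $\mathcal{H}=\pi_1H$ for a sub-rose $H\subset F$; the monomorphism $\psi$ is then carried by a cellular, $\pi_1$-injective map $\psi\colon H\to F$. The standard $2$-complex of the HNN extension agrees --- up to the elementary expansions and collapses used in Section~\ref{sec:generalization to pi1 mappings}, which do not disturb negative immersions --- with the partial mapping torus of this map, so by Theorem~\ref{thm:main coherence theorem} together with its extension to $\pi_1$-injective maps in Section~\ref{sec:generalization to pi1 mappings}, $X$ has negative immersions if and only if $\psi$ has finite directed height. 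It then remains to show that $\overrightarrow{\height}(\psi)<\infty$ if and only if $\psi$ is fully irreducible, and since both conditions depend only on $\psi$ as a partial endomorphism this completes the reduction.

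For the implication ``$\overrightarrow{\height}(\psi)=\infty\Rightarrow\psi$ is not fully irreducible'' I would apply Lemma~\ref{lem:infinite height give zer euler}: infinite directed height yields a connected subgraph $D\subset H$ with $\chi(D)\le 0$ and $\psi(D)=D$, and replacing $D$ by its core produces a connected subgraph $D_{\mathrm{core}}\subset H$ containing an embedded circle with $\psi(D_{\mathrm{core}})=D_{\mathrm{core}}$. The crucial observation is that $\pi_1D_{\mathrm{core}}$ is not merely a subgroup but a nontrivial proper free factor of $\mathcal{F}$: pick a spanning tree of $D_{\mathrm{core}}$, extend it to a spanning tree of $F$, and note that because $D_{\mathrm{core}}$ is connected its tree-geodesics stay inside it, so the basis loops read off the non-tree edges of $D_{\mathrm{core}}$ are genuinely part of the resulting free basis of $\mathcal{F}$. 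Taking $\mathcal{H}'=\pi_1D_{\mathrm{core}}$ and $n=1$, the invariance $\psi(D_{\mathrm{core}})=D_{\mathrm{core}}$ gives $\psi(\mathcal{H}')\subseteq g^{-1}\mathcal{H}'g$ for a suitable $g\in\mathcal{F}$ (coming from a change-of-basepoint path inside $D_{\mathrm{core}}$), so $\psi$ is not fully irreducible.

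For the converse, suppose $\psi$ is not fully irreducible, witnessed by $n>0$, a nontrivial free factor $\mathcal{H}'$ of $\mathcal{F}$ carried by the combinatorial domain $D_n$, and $g\in\mathcal{F}$ with $\psi^n(\mathcal{H}')\subseteq g^{-1}\mathcal{H}'g$. In $\mathcal{G}=\pi_1X$ the stable letter $t$ implements $\psi$, so on elements carried by $D_n$ the partial power $\psi^n$ acts as conjugation by $t^{\pm n}$; setting $s=gt^{\pm n}$ the hypothesis becomes $s\mathcal{H}'s^{-1}\subseteq\mathcal{H}'$, whence the $s$-conjugates of $\mathcal{H}'$ form a descending chain $\mathcal{H}'\supseteq s\mathcal{H}'s^{-1}\supseteq\cdots\supseteq s^{N}\mathcal{H}'s^{-N}$. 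The cosets $\mathcal{H}s^{-i}$ for $i=0,\dots,N$ are pairwise distinct, because the $t$-exponent sum of $s^{i-j}$ is a nonzero multiple of $n$ when $i\ne j$ while every element of $\mathcal{H}\le\mathcal{F}$ has $t$-exponent sum $0$; and $\bigcap_{i=0}^{N}\mathcal{H}^{s^{-i}}\supseteq\bigcap_{i=0}^{N}s^{i}\mathcal{H}'s^{-i}=s^{N}\mathcal{H}'s^{-N}$, which is infinite since $\mathcal{H}'$ is a nontrivial free group. Hence $\mathcal{H}$ has infinite height in $\mathcal{G}$, so $\overrightarrow{\height}(\psi)=\infty$ by Lemma~\ref{lem:heights are the same}. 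Together with the first implication and the reduction above this proves the theorem.

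The step I expect to be the main obstacle is the careful bookkeeping of the partial composition underlying the very definition of ``fully irreducible'': making precise that the witness $\mathcal{H}'$ must be carried by $D_n$ and that $\psi^n$ then acts as $t^{\pm n}$-conjugation in $\mathcal{G}$, and, on the other side, verifying rigorously that the invariant core subgraph $D_{\mathrm{core}}$ carries an honest free factor of $\mathcal{F}$ rather than merely a finitely generated subgroup --- this is the concrete form of the ``self-conjugation'' phenomenon advertised after Theorem~\ref{thm:main1thm}. A secondary technical point is the passage from the ``standard $2$-complex'' of the statement to a partial mapping torus to which Theorem~\ref{thm:main coherence theorem} applies, which I would justify through the $\pi_1$-injective generalization of Section~\ref{sec:generalization to pi1 mappings} and the fact that the relevant elementary expansions and collapses preserve negative immersions.
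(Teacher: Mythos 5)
Your argument is correct and follows the paper's intended route: the paper's proof of Theorem~\ref{thm:irreducible is finite height} is a bare citation of Lemmas~\ref{lem:heights are the same}, \ref{lem:nested domains}, \ref{lem:infinite height give zer euler}, and~\ref{lem:fans} (equivalently, an appeal to Theorem~\ref{thm:main coherence theorem} together with its $\pi_1$-injective extension), leaving the translation between ``fully irreducible'' and ``finite directed height'' implicit, and you supply exactly that translation — the invariant core $D_{\mathrm{core}}$ furnishing a nontrivial free factor $\mathcal{H}'$ with $\psi(\mathcal{H}')\subset g^{-1}\mathcal{H}'g$ in one direction, and the descending chain $s^i\mathcal{H}'s^{-i}$ forcing infinite height of $\mathcal{H}$ in the other. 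Your converse passes through the algebraic height of $\mathcal{H}$ in $\pi_1X$ and Lemma~\ref{lem:heights are the same}, which is marginally less direct than observing that the witness $\mathcal{H}'$ forces a circle in $D_\infty$, but is equally valid.
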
 
\begin{proof}
The proof follows from Lemma~\ref{lem:heights are the same}, Lemma~\ref{lem:nested domains}, Lemma~\ref{lem:infinite height give zer euler}, and Lemma~\ref{lem:fans}. \end{proof}
\begin{rem}\label{rem:c is less than one}
If $c=\dfrac{1}{2(m+1)\lVert \psi \rVert^m N}$  is the constant in $\chi\left(Y\right)\ \leq \ -c|Y|_2$, then $0<c<1$.
\end{rem}
\begin{rem}\label{rem:it works with isolated edges} 
\textcolor{black}{In the proof of Theorem~\ref{thm:main coherence theorem}, we assume that $Y$ has no isolated edges, as required by the definition of Negative Immersions. However, the claim that $\chi\left(Y\right)\leq -c|Y|_2$ holds even if we  allow $Y$ to have isolated edges.} This follows from a simple induction on the number of isolated edges in $Y$. Indeed, the base case holds by Theorem~\ref{thm:main coherence theorem}. Now, let $e$ be an isolated edge of $Y$. Then either $e$ is not separating and $Y=Y_1\cup e$, or $e$ is separating and $Y=Y_1\cup e \cup Y_2$. In the former case, we have
$$\chi\left(Y\right)< \chi\left(Y-e\right)=\chi\left(Y_1\right)\leq -c|Y_1|_2=-c|Y|_2$$ where the last inequality holds by induction. In the latter case, we have $$\chi\left(Y\right)=\chi\left(Y_1\right)+\chi\left(Y_2\right)-1<\chi\left(Y_1\right)+\chi\left(Y_2\right)\leq -c_1|Y_1|_2+c_2|Y_2|_2\ \leq\ -c|Y|_2$$
where the last inequality holds by induction, and $c=\min\left\{c_1, c_2\right\}$.
\end{rem}
Motivated by our desire to verify Property~\ref{pr:fgip relative to H} of the next section, we note the following consequence of the preceding statements. This does not prove Property~\ref{pr:fgip relative to H} since it does not assert that the edge groups in the splitting of $\mathcal{K}$ equal the intersections of $\mathcal{K}\cap \mathcal{H}^g$, for $g\in \pi_1X$. 
\begin{thm}\label{thm:finite intersection property}
Let $F$ be a finite \textcolor{black}{connected} graph and let $H\subset F$ be a subgraph. Let $X$ be the mapping torus of a cellular immersion $\psi:H\rightarrow F$ with $\overrightarrow{\height}\left(\psi\right)<\infty$. Let $\mathcal{K}\subset \pi_1 X$ be a finitely generated subgroup. Then $\mathcal{K}$ splits over edge groups with uniformly bounded Euler characteristic.
\end{thm}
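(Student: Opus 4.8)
The plan is to realize $\mathcal{K}$ as the fundamental group of a compact core carrying a graph-of-spaces structure, read off the tautological splitting of $\mathcal{K}$ over its edge groups, and then refine that splitting using finite directed height until the edge groups have uniformly bounded Euler characteristic.

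First I would pass to the cover $\widehat X\to X$ corresponding to $\mathcal{K}$. Since $\overrightarrow{\height}(\psi)<\infty$, Theorem~\ref{thm:main coherence theorem} gives that $X$ has negative immersions, so $\pi_1X$ is coherent (Theorem~\ref{thm:negative Imm give coherence}) and $\mathcal{K}$ is finitely presented; as in the coherence argument (and in the spirit of Feighn--Handel) $\mathcal{K}$ is then carried by a combinatorial immersion $Y\to X$ with $Y$ compact, connected, collapsed, and with no isolated edges. Any isolated edges of the full core contribute free $\mathbb{Z}$ summands to $\mathcal{K}$, which split trivially, so we may ignore them (cf.\ Remark~\ref{rem:it works with isolated edges}). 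Pulling back $X\to\Gamma_X$ yields a finite graph-of-spaces decomposition $Y\to\Gamma_Y$ with finite-graph vertex spaces $Y_v$ immersing in $F$, finite-graph edge spaces $Y_{e_i}$ immersing in $H$, and $\Psi\colon Y_e\to Y_v$ covering $\psi$ (Remark~\ref{rem:common-claims}). This already presents $\mathcal{K}$ as the fundamental group of a finite graph of groups with finitely generated free vertex groups $\pi_1Y_v$ and edge groups $\pi_1Y_{e_i}$.

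Second, I would bound $\lvert\chi(\pi_1Y_{e_i})\rvert=\operatorname{rank}(\pi_1Y_{e_i})-1$, refining the splitting as needed. Because $Y\to X$ is a combinatorial immersion, $\Psi^{-\ell}(Y_e)$ immerses into $\psi^{-\ell}(H)$ for $\ell=\overrightarrow{\height}(\psi)$, hence is a forest, so $\overrightarrow{\height}(\Psi)\le\ell$, and by Lemma~\ref{lem:nested domains} the $\Psi$-invariant subgraph $T_i\subseteq Y_{e_i}$ of edges surviving all powers of $\Psi$ is a forest. Collapsing the components of $T_i$ leaves $\operatorname{rank}(\pi_1Y_{e_i})$ unchanged and removes every edge with infinite forward $\Psi$-orbit; each remaining edge of $Y_{e_i}$ carries only a finite fan, of length at most $m(\psi)$, so by Lemma~\ref{lem:fans} its $2$-cell sits in a short immersed ladder — of bounded total size — emerging within controlled distance of $\partial Y$. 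Thus, for a suitable $C=C(F,H,\psi)$, all the $2$-cells of $Y$ lie in a subcomplex $N$ built from bounded-complexity pieces inside the $C$-neighborhood of $\partial Y$, while the closure of $Y\setminus N$, together with the forests $T_i$, is one-dimensional. Cutting $Y$ along the vertical subgraph separating $N$ from the rest refines the graph of groups — this is automatic, since one is cutting along subgraphs of vertex spaces, which are always $\pi_1$-injective — and each resulting edge group is carried either by a forest, hence is trivial, or by a subgraph of diameter $\le 2C$ inside some $Y_v$; as $Y_v$ immerses in the fixed finite graph $F$, such a subgraph has rank bounded in terms of $F$ and $C$ alone. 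This gives the asserted uniform bound, the unbounded part of $\mathcal{K}$ being absorbed by the new vertex groups.

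The hard part is precisely this refinement. The tautological splitting does \emph{not} already suffice: an edge space $Y_{e_i}$ can be an arbitrarily large connected cover of $H$ — for instance when $\mathcal{K}$ has finite index in $\pi_1X$ and $\mathcal{K}\cap\pi_1H$ has large index in $\pi_1H$ — so $\operatorname{rank}(\pi_1Y_{e_i})$ is genuinely unbounded, and the content of the theorem is exactly that finite directed height forces this rank to be concentrated either in the $\Psi$-invariant forest $T_i$, which disappears on collapsing, or in bounded-length fans clustered near $\partial Y$. Since cutting along subgraphs of vertex spaces is harmless, the remaining difficulty is purely quantitative: one must track the constants in Lemma~\ref{lem:fans} carefully enough to see that every surviving circuit of an edge space lies within a definite distance $C=C(F,H,\psi)$ of $\partial Y$, so that the cut separates off only bounded-complexity pieces. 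Getting Lemmas~\ref{lem:nested domains} and~\ref{lem:fans} to cooperate at this level — in particular propagating $\partial Y$-proximity from the bottom of a ladder up to the $2$-cell at its top and then back to the edge of the edge space it arose from — is the crux.
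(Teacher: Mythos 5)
Your proposal misreads what ``uniformly bounded'' means here, and as a result it sets out to prove a stronger (and, as your own example shows, false) statement by a much more elaborate route than the paper takes. The paper does \emph{not} claim that the edge groups of the splitting of $\mathcal{K}$ have Euler characteristic bounded by an absolute constant depending only on $(F,H,\psi)$; it claims a bound in terms of $\rank(\pi_1 Y)$, i.e.\ in terms of $\mathcal{K}$ itself. The proof makes this explicit: ``We show that $\chi(O_Y)$ is uniformly bounded by a function of $\rank(\pi_1Y)$.'' The scenario you flag as the obstruction --- $\mathcal{K}$ of large finite index in $\pi_1X$, so that $Y_{e_i}$ is a large cover of $H$ --- is not a counterexample to the actual statement, because then $|\chi(Y)|$ is correspondingly large and the bound is allowed to grow with it.

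Given the correct reading, the tautological splitting \emph{does} suffice, and the paper's proof is a short Euler-characteristic computation on that splitting, with no refinement at all. One takes the compact immersed core $Y\to X$ for $\mathcal{K}$ (coherence, via Theorems~\ref{thm:main coherence theorem} and~\ref{thm:negative Imm give coherence}), the induced graph-of-spaces decomposition $Y\to\Gamma_Y$, and $V_Y$, $O_Y$ as in Remark~\ref{rem:common-claims}. The splitting of $\mathcal{K}$ is the one coming from $\Gamma_Y$, with edge groups $\pi_1 Y_{e_i}$, so the quantity to control is $\chi(O_Y)=\sum_i\chi(Y_{e_i})$. Since $Y$ is collapsed with no isolated edges, each $\chi(Y_{e_i})\leq 0$ by Remark~\ref{rem:common-claims}.\eqref{remark:03}--\eqref{remark:4.5}. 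The negative immersions inequality $\chi(V_Y)-\chi(O_Y)\leq -c|Y|_2$ (with $c\in(0,1)$ by Remark~\ref{rem:c is less than one}, and valid even with isolated edges by Remark~\ref{rem:it works with isolated edges}), combined with $\chi(Y)=\chi(V_Y)-E+|Y|_2$ where $E\geq 0$ is the number of horizontal edges, yields $\chi(O_Y)\geq \chi(Y)/c$. Thus $0\geq\chi(O_Y)\geq\chi(Y)/c$, which is exactly the claimed bound in terms of $\rank(\mathcal{K})$.

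Your refinement scheme --- collapsing the $\Psi$-invariant forests $T_i$, clustering $2$-cells near $\partial Y$ via Lemma~\ref{lem:fans}, and cutting along a vertical subgraph separating a neighborhood $N$ of $\partial Y$ from the rest --- would require substantial justification (in particular, that the cut produces a graph-of-groups decomposition of $\mathcal{K}$ with the edge groups you describe, and that ``every surviving circuit lies within distance $C$ of $\partial Y$''), and none of it is needed. You should also note that the paper itself remarks, right before the theorem, that this result does \emph{not} establish the stronger Property~\eqref{pr:fgip relative to H}; what you are reaching for is closer to that unproved conjecture than to the stated theorem.
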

\begin{proof}
By Theorem~\ref{thm:main coherence theorem}, $X$ has negative immersions. By Theorem~\ref{thm:negative Imm give coherence}, $\pi_1 X$ is coherent. So there is a combinatorial immersion $Y\rightarrow X$, with $\pi_1 Y\xrightarrow{\simeq}\mathcal{K}$ where $Y$ is compact and connected. We can assume that $Y$ is collapsed since collapsing is a homotopy equivalence. Let $Y\rightarrow \Gamma_Y$ be the graph-of-spaces decomposition induced by the decomposition $X\rightarrow \Gamma_X$. Let $V_Y$ and $O_Y$ be the disjoint union of vertex-spaces and outgoing edge-spaces of $Y$, respectively. We show that $\chi\left(O_Y\right)$ is uniformly bounded by a function of $\rank\left(\pi_1 Y\right)$. In fact, we show that $\chi\left(O_Y\right)\ \geq\ \dfrac{\chi\left(Y\right)}{c}$. By Theorem~\ref{thm:main coherence theorem}, Remark~\ref{rem:c is less than one} and Remark~\ref{rem:it works with isolated edges}, there is a constant $c\in (0,1)$ such that $\chi\left(V_Y\right)-\chi\left(O_Y\right)\ \leq\ -c|Y|_2$. So $\chi\left(O_Y\right)\ \geq\ \chi\left(V_Y\right)+c|Y|_2$. We have $\chi\left(Y\right)\ =\ \chi\left(V_Y\right)-E+|Y|_2$, where $E$ are the number of the horizontal edges in $Y$. Since $c-1<0$, we have $$\chi\left(O_Y\right)\ \geq\ \chi\left(Y\right)+E-|Y|_2+c|Y|_2\ \geq\ \chi\left(Y\right)+(c-1)|Y|_2\ \geq\ \dfrac{\chi\left(Y\right)}{c}$$
where the last inequality follows by By Theorem~\ref{thm:main coherence theorem}. 
\end{proof}
\section{generalization to $\pi_1$-injective mappings}\label{sec:generalization to pi1 mappings}

Theorem~\ref{thm:main coherence theorem} generalizes to $\pi_1$-injective mappings. For this, we require the image of $\psi$ to be a subgraph. Moreover, by slightly changing the definition of directed height, we are able to to use the proof for the immersion case in the $\pi_1$-injective case. To this end, we use Theorem~\ref{thm:stallings} to factorize $\psi^{i}$ into an immersion post-composed with a $\pi_1$-isomorphism. Then generalized versions of Lemma~\ref{lem:nested domains} and Lemma~\ref{lem:infinite height give zer euler} can be proved with respect to the immersive factor of $\psi^{i}$, whereas Lemma~\ref{lem:heights are the same} and Lemma~\ref{lem:fans} remain true in both cases. We note that fans behave differently in this case. Indeed, in the immersion case, infinite directed height implies the existence of infinite fans. This is not true in the non-immersion case. See Figure~\ref{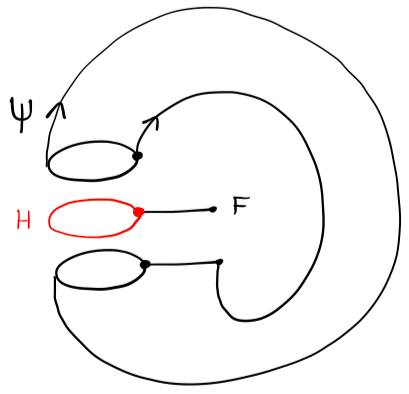}. However, when the directed height is finite, fans have the expected behaviour, namely, infinite fans arise from tree components of $H$ and maximal finite fans have uniformly bounded length $m$.
\begin{figure}\centering
\includegraphics[width=.2\textwidth]{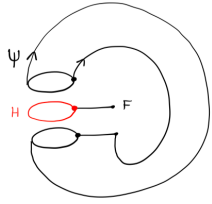}
\caption[]{\label{image8.png}
$\psi$ has infinite directed height, but the mapping torus of $\psi$ has one fan of maximal length $1$.}
\end{figure}
\begin{thm}[Stallings Factorization~\cite{Stallings83}]\label{thm:stallings}
Let $H$ and $G$ be graphs. Every cellular map $\psi:H\rightarrow G$ factors as $\psi=\theta\ \circ\  \rho$ where $\rho$ is a composition of edge collapses and foldings, and $\theta$ is an immersion. Moreover, $\rho$ is a homotopy equivalence if and only if $\psi$ is $\pi_1$-injective.
\end{thm}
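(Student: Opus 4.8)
The plan is to run Stallings' folding process on $\psi$ and then read off the homotopy-equivalence criterion. First I would normalize $\psi$: after subdividing $H$ (a harmless preliminary, cf.\ \cite{Stallings83}), I may assume that $\psi$ maps each edge of $H$ either homeomorphically onto an edge of $G$ or constantly onto a vertex of $G$; an \emph{edge collapse} then removes each edge of the latter kind, so I may assume $\psi$ is combinatorial, sending edges to edges. Each such collapse is recorded as the first part of $\rho$.

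Next I would induct on $|H|_1$. If $\psi$ is already locally injective, I take $\theta = \psi$ and $\rho = \mathrm{id}$. Otherwise, because $\psi$ is combinatorial, its failure of local injectivity occurs at a vertex $v$, where there are distinct edges $e_1 \neq e_2$ issuing from $v$ with $\psi(e_1) = \psi(e_2)$ as oriented edges; let $q\colon H \to H' := H/(e_1 \sim e_2)$ be the corresponding \emph{fold}. Since $\psi$ already identifies $e_1$ with $e_2$, it descends to a combinatorial map $\psi'\colon H' \to G$ with $\psi = \psi' \circ q$, and $|H'|_1 = |H|_1 - 1$. Applying the inductive hypothesis to $\psi'$ and precomposing with $q$ produces the desired factorization $\psi = \theta \circ \rho$.

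For the ``moreover'' I would combine three standard facts. (i) Every fold and every edge collapse is $\pi_1$-surjective: for a fold, $H'$ is the pushout of $H \hookleftarrow (e_1 \cup e_2) \to \overline{e}$ along the surjective folding map onto the image edge $\overline{e}$, and since $\pi_1 \overline{e} = 1$, van Kampen exhibits $\pi_1 H'$ as a quotient of $\pi_1 H$; a collapse is even more plainly $\pi_1$-onto. Hence $\rho_*$ is surjective. (ii) Any immersion of graphs is $\pi_1$-injective, since on universal covers it becomes a locally injective map of trees, and such a map is injective (an arc between two points with a common image would map to a reduced nullhomotopic loop, hence be degenerate). (iii) A map of graphs inducing a $\pi_1$-isomorphism is a homotopy equivalence, graphs being aspherical. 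Now if $\psi$ is $\pi_1$-injective, then $\psi_* = \theta_* \circ \rho_*$ forces $\rho_*$ injective, so with (i) it is an isomorphism, whence $\rho$ is a homotopy equivalence by (iii); conversely, if $\rho$ is a homotopy equivalence, then $\rho_*$ is an isomorphism, and (ii) makes $\theta_*$ injective, so $\psi_* = \theta_* \circ \rho_*$ is injective.

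The folds themselves are routine; the only place I expect to have to be careful is the normalization and termination bookkeeping — that the reduction to a combinatorial map costs only finitely many moves, that a map which is not locally injective always exposes an admissible fold, and, for disconnected $H$, that $\rho$ stays a bijection on components (no move merges or splits components). Since $H$ is finite and each fold and each collapse strictly drops $|H|_1$, termination is then automatic; the one genuinely substantive import from the classical theory is the $\pi_1$-injectivity of graph immersions used in (ii).
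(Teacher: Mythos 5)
The paper states Theorem~\ref{thm:stallings} with a citation to Stallings and gives no proof of its own, so there is no internal argument to compare against; your reconstruction is the standard Stallings folding proof and is essentially correct. Two small caveats. In step (i), the claim that $\pi_1\overline{e}$ is trivial fails whenever the identified edge $\overline{e}$ comes out a loop (e.g.\ when one of $e_1,e_2$ is already a loop, which forces the terminal vertices to be identified as well), in which case $\pi_1\overline{e}\cong\mathbb{Z}$; the cleaner justification is that the folding map $e_1\cup e_2\to\overline{e}$ is itself $\pi_1$-surjective, so van Kampen still exhibits $\pi_1 H'$ as a quotient of $\pi_1 H$ by collapsing the amalgam $\pi_1 H \ast_{\pi_1(e_1\cup e_2)}\pi_1\overline{e}$. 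Also, the induction on $|H|_1$ and the normalization-by-subdivision both presume $H$ finite, whereas the theorem as stated allows arbitrary graphs; this is harmless for the paper's applications, where $H$ is always a finite subgraph of a finite $F$, but as a proof of the statement as written it is incomplete. The substantive input you rightly isolate, $\pi_1$-injectivity of graph immersions via lifting to a locally injective (hence injective) map of trees, is the correct key fact.
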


Given a $\pi_1$-injective map $\psi:H\rightarrow F$, where $\psi\left(H\right)$ is a subgraph of $F$, define $\psi^{i}$ as in Section~\ref{sec:finiteheight}. Note that $\psi^{i}$ is $\pi_1$-injective for all $i\geq 0$. Then by Theorem~\ref{thm:stallings}, we have $\psi^{i}=\theta_i\ \circ\ \rho_i$ where $\rho_i$ is a $\pi_1$-isomorphism and $\theta_i$ is an immersion:
\begin{center}
  \begin{tikzcd}
Z_i \arrow[rr, "\psi^{i}"] \arrow[rd, "\rho_i", two heads] &                                   & F \\
                                                   & \rho\left(Z_i\right) \arrow[ru,loop-math to, "\theta_i"] &  
\end{tikzcd}
\end{center}
Define the \textit{directed height} of $\psi$ as:
$\overrightarrow{\height}\left(\psi\right)=\inf\left\{i :\  \theta_i^{-1}\left(H\right)\ \text{is a forest}\right\}$. 
\begin{lem}\label{lem}\label{lem:nested domains nonimmersion}
 Let $H$ be a subgraph of $F$. Let $\psi:H\rightarrow F$ be a $\pi_1$-injective cellular   map with $\overrightarrow{\height}\left(\psi\right)=\ell<\infty$. Then $D_\infty$ is a (possibly empty) forest.
\end{lem}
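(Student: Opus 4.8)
The plan is to follow the proof of Lemma~\ref{lem:nested domains}, inserting the Stallings factorization $\psi^\ell=\theta_\ell\circ\rho_\ell$ of Theorem~\ref{thm:stallings} so as to run the argument with the immersive factor $\theta_\ell$ in place of $\psi^\ell$. With the present definition, $\overrightarrow{\height}(\psi)=\ell$ means $\theta_\ell^{-1}(H)$ is homeomorphic to a forest, so it suffices to show that if $D_\infty$ contains an embedded circle then so does $\theta_\ell^{-1}(H)$, which is a contradiction; when $D_\infty=\emptyset$ it is a forest by convention.

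Let $C\subset D_\infty$ be an embedded circle. Since $D_\infty$ is $\psi$-invariant, $\psi$ restricts to a self-map $\psi|_{D_\infty}\colon D_\infty\to D_\infty$, and this restriction is $\pi_1$-injective because $\psi$ is $\pi_1$-injective and the inclusion $D_\infty\hookrightarrow H$ of a subgraph is $\pi_1$-injective. Hence $\psi^\ell|_{D_\infty}=(\psi|_{D_\infty})^\ell$ is $\pi_1$-injective, and $\psi^\ell(C)\subseteq D_\infty\subseteq H$; since $[C]$ is nontrivial in $\pi_1 D_\infty$, the map $(\psi^\ell|_C)_*$ is nonzero. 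Writing $\psi^\ell|_C=(\theta_\ell|_{\rho_\ell(C)})\circ(\rho_\ell|_C)$, this forces $(\rho_\ell|_C)_*\colon\pi_1 C\to\pi_1(\rho_\ell(C))$ to be nonzero, so the connected $1$-complex $\rho_\ell(C)$ has nontrivial fundamental group and hence contains an embedded circle. Finally $\theta_\ell(\rho_\ell(C))=\psi^\ell(C)\subseteq H$ gives $\rho_\ell(C)\subseteq\theta_\ell^{-1}(H)$, so $\theta_\ell^{-1}(H)$ contains an embedded circle, contradicting $\overrightarrow{\height}(\psi)=\ell$. Thus $D_\infty$ is a forest.

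The point needing care---and the main obstacle---is that $\theta_\ell^{-1}(H)$ need not be an honest subgraph of $F$, so one cannot simply push subgraph inclusions forward; this is why the argument is phrased homotopically, using the $\psi$-invariance of $D_\infty$ to get a genuine self-map whose $\pi_1$-injectivity survives the folding $\rho_\ell$, together with the elementary fact that a connected $1$-dimensional complex fails to be a forest precisely when it contains an embedded circle. Everything else is routine, and in the immersion case $\rho_\ell=id$ and $\theta_\ell=\psi^\ell$, so the argument specializes to the proof of Lemma~\ref{lem:nested domains}.
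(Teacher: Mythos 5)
Your proof is correct and takes essentially the same approach as the paper: both feed an embedded circle in $D_\infty$ into the Stallings factorization $\psi^\ell=\theta_\ell\circ\rho_\ell$ and use the $\pi_1$-injectivity of the folding $\rho_\ell$ to produce an embedded circle in $\theta_\ell^{-1}(H)$, contradicting $\overrightarrow{\height}(\psi)=\ell$. The only cosmetic difference is that the paper derives the (slightly stronger) statement that $D_{\ell+1}$ is a forest and then invokes $D_\infty\subseteq D_{\ell+1}$, whereas you argue directly with $D_\infty$ via its $\psi$-invariance---a harmless but unnecessary detour, since $C\subset D_\infty\subseteq D_{\ell+1}$ already gives $\psi^\ell(C)\subset H$, and the $\pi_1$-injectivity of $\rho_\ell$ on $Z_\ell$ alone makes $\rho_\ell(C)$ non-simply-connected.
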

\begin{proof} 
We have $D_\infty\subseteq  D_{\ell+1}$. So, it suffices to show that $D_{\ell+1}$ is a forest. Suppose $C\subset D_{\ell+1}$ is an embedded circle. Then $\psi^\ell\left(C\right)\subset H$. Since $\rho_\ell$ is $\pi_1$-injective, $\rho_\ell\left(C\right)$ is not a tree. But  $\rho_\ell\left(C\right)\subset \theta_\ell^{-1}\left(H\right)$, and so   $\theta_\ell^{-1}\left(H\right)$ is not a forest, which is a contradiction.
\end{proof}

\begin{lem}\label{lem:infinite height give zer euler nonimmersion}
Let $F$ be a graph and let $ H\subset F$ be a finite subgraph. Let $X$ be the mapping torus of a $\pi_1$-injective cellular map $\ \psi:H\rightarrow F$ where $\image\left(\psi\right)$ is a subgraph of $F$. If  
$\overrightarrow{\height}\left(\psi\right)=\infty$, then there is a subcomplex $Y\subset X$ with $\chi\left(Y\right)=0$ and $Y$ is connected, compact, collapsed, and has no isolated edges.
\end{lem}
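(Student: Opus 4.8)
The plan is to run the argument of Lemma~\ref{lem:infinite height give zer euler} in the $\pi_1$-injective setting, reading the relevant combinatorial domains off the immersive factors and using the Stallings factorization $\psi^i=\theta_i\circ\rho_i$ together with the fact that each $\rho_i$ is a homotopy equivalence (Theorem~\ref{thm:stallings}, since $\psi^i$ is $\pi_1$-injective). What I want to build is a $\psi$-invariant subgraph $D$ of $H$ with $\pi_1 D\neq 1$ on which $\psi$ restricts to a surjection; its mapping torus, suitably collapsed, will be the required subcomplex $Y\subseteq X$.

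First I would show that $\overrightarrow{\height}(\psi)=\infty$ forces each combinatorial domain $D_i$ to contain an embedded circle. By definition $\theta_i^{-1}(H)$ is not a forest for every $i$, so $\pi_1(\rho_i(Z_i))\neq 1$; since $\rho_i$ is a homotopy equivalence, $Z_i$ has nontrivial $\pi_1$, and since $Z_i$ is $D_i$ together with a disjoint union of contractible pieces (Definition~\ref{defn:combinatorial}), $D_i$ has a non-tree component. As $H$ is finite and $D_{i+1}\subseteq D_i$, the sequence stabilizes, so $D_\infty$ has a non-tree component; and $D_\infty$ is $\psi$-invariant because any edge occurring in the image of an edge all of whose forward $\psi$-iterates lie in $H$ again has that property. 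Next I would pass to the eventual image: the subgraphs $\psi^k(D_\infty)$ decrease, so they stabilize to a subgraph $E$ with $\psi(E)=E$; since $\psi^k|_{D_\infty}$ is $\pi_1$-injective (a restriction of the $\pi_1$-injective map $\psi^k$ to a subgraph of its domain), $\pi_1 D_\infty$ injects into $\pi_1 E$, so $E$ still has a non-tree component. Finally $\psi$ induces a surjection — hence a permutation — of the finite set of components of $E$, and $\pi_1$-injectivity shows that the forward orbit under $\psi$ of a non-tree component consists of non-tree components. Taking $D$ to be a minimal $\psi$-invariant subgraph with $\pi_1 D\neq 1$, one checks that $\psi(D)=D$, that $D$ has no tree components, that its components form a single $\psi$-orbit, and that $\psi|_D\colon D\to D$ is $\pi_1$-injective.

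Now set $Y_0$ to be the mapping torus of $\psi|_D\colon D\to D$. It is a subcomplex of $X$ with $\chi(Y_0)=\chi(D)-\chi(D)=0$; it is connected (the mapping torus threads through the cyclically permuted components of $D$) and compact. Every edge of $Y_0$ lies in a $2$-cell — the vertical edges automatically, the horizontal edges because $D$ has no isolated vertices — so $Y_0$ has no isolated edges, and because $\psi(D)=D$ the only free faces of $Y_0$ are the horizontal edges over leaves of $D$. Moreover $\pi_1 Y_0$ surjects onto $\integers$ with infinite kernel (the fundamental group of the infinite mapping telescope of $\psi|_D$), so $\pi_1 Y_0$ is not cyclic. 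Let $Y$ be the complex obtained by collapsing $Y_0$: collapsing just retracts away the ladders over the spur subtrees of $D$, so $Y$ is connected, compact, collapsed, $\chi(Y)=0$, and $\pi_1 Y=\pi_1 Y_0$ is not cyclic, whence $Y$ contains a $2$-cell. It remains to see that $Y$ has no isolated edges, and here minimality of $D$ is the crucial input: if a spur edge $e$ of $D$ with leaf $v$ lay in no other edge's $\psi$-image, then $D\setminus\{e,v\}$ would be a strictly smaller $\psi$-invariant subgraph with nontrivial $\pi_1$, a contradiction; hence every spur of $D$ is covered by another edge's image, and collapsing a spur ladder never strands a $1$-cell outside all $2$-cells. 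This $Y$ is the desired subcomplex, and it also witnesses the failure of negative immersions for $X$, since $\chi(Y)=0>-c\,|Y|_2$ for every $c>0$.

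The step I expect to be the genuine obstacle is exactly this control of the collapse — guaranteeing no isolated edges — because that is the one point where the $\pi_1$-injective case really differs from Lemma~\ref{lem:infinite height give zer euler}. There $\psi$ is an immersion, so $\psi(D_{\text{core}})=D_{\text{core}}$ comes for free and the mapping torus over the core is already collapsed with no isolated edges; here $\psi|_D$ may backtrack into the spurs of $D$, and one must argue via minimality of $D$ (or equivalently carry the whole construction out with the immersive factors $\theta_i$, replacing $\psi|_D$ by its tightening on $\text{core}(D)$) that the collapsed complex still has the required shape. Everything else transcribes directly from the immersion case.
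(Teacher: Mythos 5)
Your overall strategy matches the paper's in spirit — locate a $\psi$-invariant subgraph of $H$ of positive rank, take the mapping torus over it, and collapse — but the route you take to that subgraph is different and noticeably longer. The paper works directly with the \emph{forward images} $A_i=\image\left(\psi^{i}\right)\cap H$: these satisfy $A_{i+1}\subseteq A_i$, stabilize at some $A_j$ with $\psi(A_j)=A_j$, and $\rank(A_j)>0$ follows from the same $\theta_i$-immersion argument you use (a forest maps to a forest under an immersion, so $\theta_i^{-1}(H)$ non-forest forces $A_i$ non-forest). You instead start from the \emph{backward} objects $D_i$, stabilize them to $D_\infty$, then forward-iterate $\psi^k(D_\infty)$ to extract an exactly-invariant $E$, and then minimize once more. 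That is three limiting/stabilization passes where the paper uses one; your argument is not wrong in spirit, but it is doing extra work because $D_\infty$ is only forward-invariant ($\psi(D_\infty)\subseteq D_\infty$), whereas $A_j$ is already set up to be stabilized by $\psi$.

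The step you flag as the likely obstacle is indeed where the gap is, and I think it is a genuine one as written. From minimality of $D$ you conclude that if a spur $e$ with leaf $v$ lies in no other edge's image, then $D\setminus\{e,v\}$ is a strictly smaller $\psi$-invariant subgraph with $\pi_1\neq 1$. But minimality only tells you that $\psi\bigl(D\setminus\{e,v\}\bigr)$ meets $\{e,v\}$; it does not tell you it meets $e$. Since $v$ is a leaf, any $\psi(f)$ of positive length that reaches $v$ must traverse $e$, so that case is fine — but if $\psi$ \emph{collapses} some edge $f$ of $D$ to the single vertex $v$ (which a cellular, non-immersive, $\pi_1$-injective map may do freely, as $f$ need not be a loop), then $D\setminus\{e,v\}$ fails to be $\psi$-invariant without $e$ being covered. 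In that scenario $h_v$ is still a free face and $S_e$ gets collapsed away, and $e$ \emph{can} be stranded as an isolated edge unless you also collapse zero-cells or cut it out by hand. Put differently: the dichotomy ``$e$ is covered by another edge, or $D\setminus\{e,v\}$ is invariant'' is only a trichotomy, and the third branch (edge-collapse onto $v$) is left open. The paper avoids this by restricting to a leafless graph (the core of the non-tree components of $A_j$) before forming the mapping torus, so there are no spur-ladders to collapse in the first place; you should either do the same — pass to a leafless $\psi$-invariant subgraph before building $Y_0$ — or explicitly handle the edge-collapse case, e.g.\ by deleting stranded spurs along with their leaves after collapsing and checking this does not change $\chi$ or disconnect $Y$.

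One small extra remark: you spend effort showing $\pi_1 Y_0$ surjects onto $\integers$ with infinite kernel to rule out $Y$ being trivial, but this is automatic once you know $\chi(Y)=0$, $Y$ connected, compact, and $Y$ has no isolated edges: a connected $2$-complex with $\chi=0$ cannot be a point, and if it had no $2$-cells all its edges would be isolated, so it must contain a $2$-cell, and $\chi=0$ rules out $\pi_1 Y=1$. So that paragraph can be trimmed once the isolated-edge issue is repaired.
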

\begin{proof}
Let $A_i=\image\left(\psi^{i}\right)\cap H$ for $i\geq 0$. Then by definition, $A_{i+1}\subseteq A_i$ and $\psi\left(A_i\right)=A_{i+1}$. Moreover, $\rank\left(A_i\right)>0$. Indeed, since $\theta_i$ is an immersion, $\theta_i^{-1}\left(H\right)$ is a forest whenever $A_i$ is a forest. By assumption, $\overrightarrow{\height}\left(\psi\right)=\infty$, and so   $\theta_i^{-1}\left(H\right)$ is not a forest for all $i\geq 0$. Since $H$ is a finite graph, there exist $j<k$ such that $A_j=A_k$. Then $A_j=A_k\subseteq A_{k-1}\subseteq \cdots\subseteq A_{j+1}\subseteq A_j$ and so $A_j=A_{j+1}$. Since $\psi$ is $\pi_1$-injective and $\rank\left(A_j\right)>0$, a component of the mapping torus of $\psi$ restricted to the core of the non-tree components of $A_j$ provides $Y$.
\end{proof}

\begin{rem}\label{rem:pi1-injectivity needed} The requirement that $\psi$ be $\pi_1$-injective in Lemma~\ref{lem:infinite height give zer euler nonimmersion} ensures that finite directed height implies negative immersions. Indeed, let $X$ be the mapping torus of $\psi:H\twoheadrightarrow H$ where $H$ is a connected leafless graph of positive rank. Suppose $\psi_*\left(\pi_1H\right)$ is trivial. Then $\overrightarrow{\height}\left(\psi\right)=1$ but $X$ is collapsed with $\chi\left(X\right)=0$, and so the negative immersions property fails. See Figure~\ref{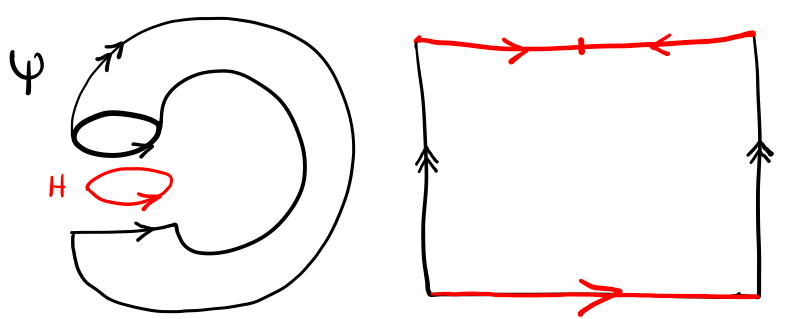}.
\begin{figure}\centering
\includegraphics[width=.4\textwidth]{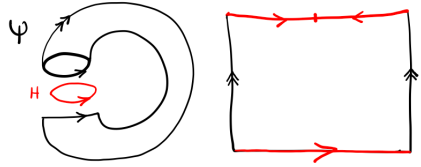}
\caption[]{\label{image9.png}
The mapping torus of $\psi$ has zero Euler characteristic and $\overrightarrow{\height}\left(\psi\right)=1$. }
\end{figure}
\end{rem}

\begin{thm}\label{thm:main coherence theorem nonimmersion}
Let $F$ be a finite graph and let $H\subset F$ be a subgraph. Let $X$ be the mapping torus of a $\pi_1$-injective cellular map $\psi:H\rightarrow F$. Then $X$ has negative immersions if and only if $\psi$ has finite directed height.
\end{thm}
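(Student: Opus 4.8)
The plan is to imitate the proof of Theorem~\ref{thm:main coherence theorem}, feeding it the $\pi_1$-injective analogues of its ingredients; the revised definition of directed height --- phrased through the Stallings immersive factors $\theta_i$ of the generalized powers $\psi^{i}$ (Theorem~\ref{thm:stallings}) --- is precisely what makes this substitution legitimate. The ``only if'' direction is immediate: if $\overrightarrow{\height}\left(\psi\right)=\infty$, then Lemma~\ref{lem:infinite height give zer euler nonimmersion} produces a connected, compact, collapsed subcomplex $Y\subseteq X$ with no isolated edges, $\pi_1Y\neq 1$, and $\chi\left(Y\right)=0$; since $0>-c\,|Y|_2$ for every $c>0$, the complex $X$ does not have negative immersions.

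For the ``if'' direction assume $\overrightarrow{\height}\left(\psi\right)=\ell<\infty$ (recall we assume $\image\left(\psi\right)$ is a subgraph of $F$). Let $Y\rightarrow X$ be a combinatorial immersion with $Y$ a nontrivial compact connected collapsed $2$-complex having no isolated edges, form the induced graph-of-spaces decomposition $Y\rightarrow\Gamma_Y$ and the map $\Psi\colon Y_e\rightarrow Y_v$ projecting to $\psi$, and for a vertex-space $Y_v$ write $O_v\subseteq Y_v$ for its outgoing edge-space and $\partial Y=\bigsqcup_v\partial O_v$. I would first observe that $\Psi$ is $\pi_1$-injective: $Y_e\rightarrow H$ is an immersion, $\psi\colon H\rightarrow F$ is $\pi_1$-injective, so $Y_e\rightarrow H\rightarrow F$ is $\pi_1$-injective, and this composite factors through the immersion $Y_v\rightarrow F$, forcing $\Psi$ to be $\pi_1$-injective. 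Hence Theorem~\ref{thm:stallings} applies to $\Psi$ and its generalized powers, and combined with Lemma~\ref{lem:nested domains nonimmersion} and the definition of $\overrightarrow{\height}$ this delivers the structural input on which the proof of Theorem~\ref{thm:main coherence theorem} rests: $D_\infty$ is a forest, the maximal finite fans in $X$ have uniformly bounded length $m=m\left(\psi\right)<\infty$, and every infinite fan of $X$ arises from the forest $D_\infty$. Granting this, Lemma~\ref{lem:fans} (which holds in both cases) supplies a constant $M=M\left(H,F,\psi\right)$ such that every $2$-cell of $Y$ lies in an immersed ladder of length $\leq m+1$ emerging from a vertical edge within distance $M$ of $\partial Y$; since $Y\rightarrow X$ is a combinatorial immersion there is a bound $N=N\left(F,M\right)$ on the number of edges in an $M$-ball of a vertex-space, each such edge gives rise to at most $\lVert \psi \rVert^{m}$ ladders of length $\leq m+1$, and each ladder carries at most $m+1$ two-cells, so $|Y|_2\leq (m+1)\lVert \psi \rVert^{m}N\,|\partial Y|_0$. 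It then remains to bound $\chi\left(Y\right)=\sum_v\bigl(\chi\left(Y_v\right)-\chi\left(O_v\right)\bigr)$ above by a fixed negative multiple of $|\partial Y|_0$ and combine.

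The Euler estimate is where the $\pi_1$-injective case genuinely diverges, and I expect it to be the main obstacle. Because $\Psi$ need not be an immersion, it may identify distinct edges of $Y_e$, so a vertex-space $Y_v$ can fail to be leafless and may even be a tree; thus neither Remark~\ref{rem:common-claims}\eqref{remark:05.25} nor a direct application of Lemma~\ref{lem:209} is available. For a non-tree vertex-space the remedy is to pass to its core $\bar Y_v\subseteq Y_v$: each $O_v$ is a leafless subgraph of $Y_v$ (it embeds in $Y_v$ and lies inside the leafless graph $Y_e$), so it contains no spur of $Y_v$, whence $O_v\subseteq\bar Y_v$ and $\chi\left(Y_v\right)=\chi\left(\bar Y_v\right)$; since $\bar Y_v$ is leafless with no trivial component, Lemma~\ref{lem:209} applied to $O_v\subseteq\bar Y_v$ gives $\chi\left(Y_v\right)-\chi\left(O_v\right)\leq-\tfrac12|\partial_{\bar Y_v}O_v|_0$. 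The tree vertex-spaces, and more generally the spur parts of the vertex-spaces, contribute positively to $\chi\left(Y\right)$ and must be absorbed: here I would use that every $2$-cell of $Y$ lies in a short immersed ladder near $\partial Y$ and that such a ladder runs through all the vertex-spaces containing the tops and bottoms of its $2$-cells, which bounds the number of tree vertex-spaces (and the total spur excess) by a constant times $|\partial Y|_0$. The technical heart of the theorem is to arrange this accounting --- effectively redoing the count of Lemma~\ref{lem:fans} relative to $\bigsqcup_v\partial_{\bar Y_v}O_v$ and quantifying the discrepancy with $\partial Y$ --- so that these positive contributions are dominated by the negative ones; once this is done one obtains $\chi\left(Y\right)\leq-c\,|Y|_2$ for a fixed $c>0$, and $X$ has negative immersions.
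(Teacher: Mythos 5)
Your proposal follows the strategy the paper itself outlines in the paragraph preceding Theorem~\ref{thm:stallings}: factor $\psi^{i}$ via Stallings, use the redefined directed height through $\theta_i$, substitute Lemmas~\ref{lem:nested domains nonimmersion} and~\ref{lem:infinite height give zer euler nonimmersion} for Lemmas~\ref{lem:nested domains} and~\ref{lem:infinite height give zer euler}, and otherwise rerun the proof of Theorem~\ref{thm:main coherence theorem}. The ``only if'' direction, via Lemma~\ref{lem:infinite height give zer euler nonimmersion}, is complete and matches the paper's intent.

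For the ``if'' direction you have correctly isolated the step the paper glosses over. When $\psi$ is not an immersion the induced attaching map $\Psi\colon Y_e\to Y_v$ need not be an immersion, so Remark~\ref{rem:common-claims}\eqref{remark:04} fails, and with it the leaflessness of vertex-spaces asserted in Remark~\ref{rem:common-claims}\eqref{remark:05.25}; hence the direct appeal to Lemma~\ref{lem:209} at the end of the proof of Theorem~\ref{thm:main coherence theorem} is no longer available. This is a genuine gap, and the paper's remark that Lemma~\ref{lem:fans} ``remains true in both cases'' does not repair it. Your proposed patch --- pass to the cores $\bar Y_v$, observe $O_v\subseteq\bar Y_v$ because $O_v$ is leafless, and apply Lemma~\ref{lem:209} to $O_v\subseteq\bar Y_v$ --- is the natural move. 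But it replaces $\partial Y$ by the possibly strictly smaller set $\bigsqcup_v\partial_{\bar Y_v}O_v$, so the $2$-cell count of Lemma~\ref{lem:fans} and the final chain of inequalities must be rerun against this smaller boundary; you acknowledge you have not carried this out. As written, the ``if'' direction is therefore incomplete.

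One of your worries is, however, unnecessary: a vertex-space $Y_v$ cannot be a tree. By Remark~\ref{rem:common-claims}\eqref{remark:02} every edge of $Y_v$ lies in some incoming edge-space $\Psi(C)$, where $C$ is a component of an outgoing edge-space; by Remarks~\ref{rem:common-claims}\eqref{remark:03}--\eqref{remark:4.5} $C$ is leafless with at least one edge, so $\pi_1 C\neq 1$, and since you correctly argue $\Psi$ is $\pi_1$-injective, $\Psi(C)\subseteq Y_v$ carries a nontrivial loop. So $\chi(Y_v)\leq 0$ holds automatically and only the spur excess of $Y_v$ needs absorbing, not whole tree vertex-spaces. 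Your description of the obstacle is thus slightly too pessimistic, but the obstacle itself --- reconciling the core-boundary $\partial_{\bar Y_v}O_v$ with the count in Lemma~\ref{lem:fans} --- is real and remains unresolved in your proposal.
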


\section{Discussion of Related Properties}\label{sec:related properties}
Let $H$ be a subgraph of a finite graph $F$ and let $\mathcal{H}=\pi_1H$ and $\mathcal{F}=\pi_1F$. Let $X$ be the mapping torus of a cellular immersion $\psi:H\rightarrow F$. Consider the following properties: 

\begin{enumerate}
\item\label{pr:localQC} $\pi_1X$ is locally quasiconvex.
\item\label{pr:base is QC} $\mathcal{F}$ and $\mathcal{H}$ are quasiconvex.
\item\label{pr:base is finite height} $\mathcal{F}$ and $\mathcal{H}$ have finite height.
\item\label{pr:fgip} $\pi_1X$ has the finitely generated intersection property.
\item\label{pr:negative immersions} 
$X$ has negative immersions.
\item\label{pr:no ascending hnn ext} $\pi_1X$ contains no subgroup isomorphic to an ascending HNN extension of a finitely generated free group.
\item\label{pr:hyperbolicity} $\pi_1X$ is hyperbolic.
\item\label{pr:no BS} $\pi_1X$ contains no $\BS\left(1,m\right)$ for $m>0$.
\item\label{pr:The group has a quasiconvex hierarchy} $\pi_1X$ has a quasiconvex hierarchy.
\item\label{pr:virtual special} $\pi_1X$ is virtually special.

\item\label{pr:fgip relative to H} $\mathcal{K}\cap \mathcal{H}$ is finitely generated whenever $\mathcal{K}\subset \pi_1X$ is finitely generated.
\item\label{pr:tame generation} Each finitely generated subgroup of $\pi_1X$ is tamely generated.
 \end{enumerate}

\eqref{pr:localQC}$\Rightarrow$ \eqref{pr:base is QC} is immediate. When $\pi_1X$ is hyperbolic, we have \eqref{pr:base is QC} $\iff$ \eqref{pr:base is finite height}, where $(\Rightarrow)$ holds by \cite{GMRS98} and $(\Leftarrow)$ holds by \cite{Mitra2004}. A group has the \textit{ﬁnitely generated intersection property} (FGIP) if the intersection of any two ﬁnitely generated subgroups is also ﬁnitely generated. For instance, free groups have the FGIP \cite{Howson54}. \eqref{pr:fgip}$\Rightarrow$\eqref{pr:no ascending hnn ext} holds by \cite{MR4443138} and \eqref{pr:localQC} $\Rightarrow$ \eqref{pr:fgip} holds by \cite{Short91}. \eqref{pr:negative immersions}$\Rightarrow$\eqref{pr:no ascending hnn ext} since ascending HNN extensions of free groups have Euler characteristic zero, and \eqref{pr:no ascending hnn ext}$\Rightarrow$\eqref{pr:base is finite height} by Lemma~\ref{lem:heights are the same} and Lemma~\ref{lem:infinite height give zer euler}. \eqref{pr:negative immersions}$\iff$\eqref{pr:base is finite height} holds by Theorem~\ref{thm:main coherence theorem}, and \eqref{pr:negative immersions}$\Rightarrow$\eqref{pr:no BS} is a special case of \eqref{pr:negative immersions}$\Rightarrow$\eqref{pr:no ascending hnn ext}. It is well known that \eqref{pr:hyperbolicity}$\Rightarrow$\eqref{pr:no BS}, e.g \cite{MR1170363}. \eqref{pr:hyperbolicity} $+$ \eqref{pr:The group has a quasiconvex hierarchy}$\Rightarrow$\eqref{pr:virtual special} by \cite{WiseCBMS2012}. \eqref{pr:fgip relative to H}$\Rightarrow$\eqref{pr:tame generation} since if $\mathcal{K}\cap \mathcal{H}$ is finitely generated for each finitely generated $\mathcal{K}$, then $\mathcal{K}^g\cap \mathcal{H}$ is finitely generated for each $g$, and so $\mathcal{K}\cap \mathcal{H}^g$ is finitely generated for each $g$. See \cite{MR3079269} for the definition of ``tamely generated''. \eqref{pr:tame generation}$\Rightarrow$ \eqref{pr:localQC} holds by \cite{MR3079269}. \eqref{pr:negative immersions} $\Rightarrow$ \eqref{pr:The group has a quasiconvex hierarchy} holds by the following argument: \eqref{pr:negative immersions} $\Rightarrow$ \eqref{pr:no ascending hnn ext} and by \cite{CW22}, we have \eqref{pr:no ascending hnn ext}$\Rightarrow$ $\pi_1X\subset \pi_1 X'$ where $X'$ is the mapping torus of a fully irreducible nonsurjective map of a graph and $X'$ is hyperbolic relative to $X$. By \cite{MR2949815}, this implies $\pi_1 X'$ is hyperbolic since it contains no $\BS\left(1,m\right)$ and so \eqref{pr:hyperbolicity} holds.
Since \eqref{pr:negative immersions}$\Rightarrow$\eqref{pr:base is QC}, we have \eqref{pr:base is QC}+\eqref{pr:hyperbolicity} $\Rightarrow$ \eqref{pr:The group has a quasiconvex hierarchy} since $\pi_1X$ splits along $\mathcal{H}$ and the vertex-group is free.

We end this section by stating the following conjectures:
\begin{conje}\label{conje:NG give QC}
\eqref{pr:negative immersions} $\Rightarrow$ \eqref{pr:localQC} and hence \eqref{pr:negative immersions} $\Rightarrow$ \eqref{pr:fgip relative to H}.
\end{conje}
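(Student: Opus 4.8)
We outline a possible approach to Conjecture~\ref{conje:NG give QC}. The ``hence'' is immediate: a locally quasiconvex group has the finitely generated intersection property \cite{Short91}, and $\mathcal H=\pi_1H$ is finitely generated since $H$ is finite, so $\mathcal K\cap\mathcal H$ is finitely generated for every finitely generated $\mathcal K\subset\pi_1X$. Thus the content is \eqref{pr:negative immersions}$\Rightarrow$\eqref{pr:localQC}, and by the implications assembled in this section it suffices to establish \eqref{pr:fgip relative to H} --- that $\mathcal K\cap\mathcal H$ is finitely generated for every finitely generated $\mathcal K\subset\pi_1X$ --- which yields \eqref{pr:tame generation} and hence \eqref{pr:localQC}. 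Negative immersions supplies the tools we use: by Theorem~\ref{thm:main coherence theorem} and Lemma~\ref{lem:heights are the same} the edge group $\mathcal H$ has finite height in $\pi_1X$; $\pi_1X$ is hyperbolic by the argument in this section; and $\pi_1X$ is coherent (Theorem~\ref{thm:negative Imm give coherence}), so every finitely generated $\mathcal K\subset\pi_1X$ is $\pi_1Y$ for a compact, connected, collapsed combinatorial immersion $Y\rightarrow X$.

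Finite height of $\mathcal H$ makes the action of $\pi_1X$ on the Bass--Serre tree $T$ of its HNN decomposition $k$-acylindrical with $k=\height(\mathcal H)$: a geodesic of more than $k$ edges has as its pointwise stabilizer an intersection of more than $k$ conjugates of $\mathcal H$, hence a finite --- hence trivial --- subgroup of a free group. Fix a finitely generated $\mathcal K$ and a compact immersion $Y\rightarrow X$ as above. The plan is to fold the graph-of-spaces map $Y\rightarrow X$ --- in the spirit of Stallings, Dunwoody, and Bestvina--Feighn --- to an \emph{efficient} compact immersion $Y''\rightarrow X$ whose induced graph-of-spaces decomposition realizes the action of $\mathcal K$ on its minimal invariant subtree $T_{\mathcal K}$; acylindrical accessibility for finitely generated groups (Sela, Weidmann) is what makes this reduction terminate, so that $\Gamma_{Y''}$ is a finite graph. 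Then the vertex and edge groups of the resulting splitting of $\mathcal K$ are the intersections $\mathcal K\cap\mathcal F^{g}$ and $\mathcal K\cap\mathcal H^{h}$, each finitely generated as the fundamental group of a compact graph --- indeed, applying Theorem~\ref{thm:finite intersection property} to $Y''$ even bounds the Euler characteristic of the edge groups. Finally $\mathcal K\cap\mathcal H^{g}$ is finitely generated for \emph{every} $g\in\pi_1X$: an edge of $T$ lying in $T_{\mathcal K}$ is $\mathcal K$-conjugate to one of the finitely many edges of $\Gamma_{Y''}$, while for an edge outside $T_{\mathcal K}$ its $\mathcal K$-stabilizer lies in a vertex group and equals $(\mathcal K\cap\mathcal F^{g'})\cap\mathcal H^{g}$, which is finitely generated by Howson's theorem \cite{Howson54}. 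Taking $g=1$ gives \eqref{pr:fgip relative to H}.

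The only non-formal step is the folding. As noted after Theorem~\ref{thm:finite intersection property}, the graph-of-spaces decomposition induced by an arbitrary immersion $Y\rightarrow X$ need not match the Bass--Serre splitting of $\mathcal K$ --- its edge-space fundamental groups are a priori only subgroups of the groups $\mathcal K\cap\mathcal H^{h}$ --- and the difficulty is that ``efficiency'' folds of a graph of spaces need not preserve the property of being a combinatorial immersion, so one must interleave them with ordinary Stallings folds and argue that the process terminates while keeping $Y''$ eligible for Theorem~\ref{thm:finite intersection property}. I expect the real work to lie here. An alternative route, bypassing Theorem~\ref{thm:finite intersection property} altogether, would be to establish for partial endomorphisms the analogue of the dichotomy in Problem~\ref{Ask Mahan} --- every non-quasiconvex finitely generated subgroup of $\pi_1X$ is a virtual generalized fiber --- and then invoke \eqref{pr:negative immersions}$\Rightarrow$\eqref{pr:no ascending hnn ext} (cf.\ Lemma~\ref{lem:infinite height give zer euler}) to conclude $\pi_1X$ has no such subgroup; but that dichotomy is itself open.
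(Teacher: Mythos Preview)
This statement is a \emph{conjecture} in the paper, not a theorem: the paper offers no proof, only motivation (that negative immersions strengthens negative sectional curvature, which was shown in \cite{WiseSectional02} to imply local quasiconvexity). There is therefore no proof to compare your attempt against.

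Your proposal is appropriately candid about being incomplete. The reduction of \eqref{pr:negative immersions}$\Rightarrow$\eqref{pr:localQC} to \eqref{pr:fgip relative to H}, via \eqref{pr:fgip relative to H}$\Rightarrow$\eqref{pr:tame generation}$\Rightarrow$\eqref{pr:localQC}, is exactly the circle of implications the paper records, and your observation that finite height of $\mathcal H$ makes the action on the Bass--Serre tree acylindrical is correct. But the step you yourself flag --- passing from an arbitrary compact immersion $Y\rightarrow X$ to an ``efficient'' one $Y''\rightarrow X$ whose edge groups coincide with the intersections $\mathcal K\cap\mathcal H^{h}$ --- is precisely the obstacle the paper singles out after Theorem~\ref{thm:finite intersection property}. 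Acylindrical accessibility yields an \emph{abstract} reduced graph-of-groups decomposition for $\mathcal K$, not a compact complex immersing in $X$; the graph-of-spaces folds that would correct the edge groups do not in general preserve the combinatorial immersion into $X$, and without a termination argument compatible with that immersion there is no way to invoke Theorem~\ref{thm:finite intersection property} or even to read off finite generation of the edge groups topologically. So what you have is a reasonable program, aligned with the paper's own discussion, but it does not close the conjecture --- nor does the paper claim to.
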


\begin{conje}\label{conje:negative immersions are virtually mapping tori}
If $X$ is a compact $2$-complex with negative immersions, then $\pi_1X$ has a finite index subgroup that is isomorphic to the fundamental group of a mapping torus of an immersion $\psi:H\rightarrow F$ of finite directed height.
\end{conje}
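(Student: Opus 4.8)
We do not resolve this conjecture; here is the approach we would pursue. It reduces the statement to Conjecture~\ref{conj:local quasiconvex characterization} together with the expectation --- currently open in general --- that a compact $2$-complex with negative immersions has hyperbolic and locally quasiconvex fundamental group. One ingredient, however, is unconditional: since $X$ is compact with negative immersions, Theorem~\ref{thm:negative Imm give coherence} gives that $\pi_1X$ is coherent, and moreover $\pi_1X$ contains no subgroup isomorphic to an ascending HNN extension of a nontrivial finitely generated free group. Indeed, such a subgroup $P$ would be finitely presented and, by the standard tower/core argument behind coherence, carried by a compact core immersing in $X$ as a nontrivial compact connected collapsed $2$-complex $C$ (possibly with isolated edges, to which the bound still applies by the induction of Remark~\ref{rem:it works with isolated edges}), so that $\chi\left(C\right)\leq -c|C|_2$, and $|C|_2>0$ since $P$ is not free, forcing $\chi\left(C\right)<0$; but $P$ has cohomological dimension $2$ with $\chi\left(P\right)=0$, so any $2$-complex with fundamental group $P$ has Euler characteristic $\geq 0$, a contradiction. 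This property passes to all subgroups of $\pi_1X$, and it is what will force finite directed height at the end.

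The main geometric step is to prove that negative immersions of $X$ makes $\pi_1X$ hyperbolic and locally quasiconvex. The anticipated mechanism is the sectional-curvature and perimeter machinery of \cite{WiseSectional02}: the uniform gap $\chi\left(Y\right)\leq -c|Y|_2$ for all compact collapsed pieces immersing in $X$ should be upgradable to a linear isoperimetric inequality on $X$, giving hyperbolicity, and to a linear bound on the perimeter of the tracks carrying finitely generated subgroups, giving local quasiconvexity, just as is carried out under an explicit negative curvature hypothesis in \cite{WiseSectional02}; see also Conjecture~\ref{conje:NG give QC}. Granting this, $\pi_1X$ is a locally quasiconvex hyperbolic group.

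With that in hand, apply Conjecture~\ref{conj:local quasiconvex characterization}: $\pi_1X$ has a finite index subgroup $\mathcal{G}'$ isomorphic to $\pi_1$ of a mapping torus of a fully irreducible partial endomorphism $\Psi:\mathcal{H}\rightarrow\mathcal{F}$ of a finitely generated free group $\mathcal{F}$. Since $\pi_1X$, hence $\mathcal{G}'$, contains no ascending HNN extension of a nontrivial finitely generated free group, $\Psi$ cannot have $\mathcal{H}=\mathcal{F}$, so $\mathcal{H}$ is a proper free factor of $\mathcal{F}$. Realize $\mathcal{F}$ as $\pi_1F$ for a finite graph $F$, the free factor $\mathcal{H}$ as $\pi_1H$ for a connected subgraph $H\subset F$, and $\Psi$ by a cellular $\pi_1$-injective map $H\rightarrow F$; folding this map by Stallings' factorization (Theorem~\ref{thm:stallings}), the folding part being a homotopy equivalence and hence not changing the $\pi_1$ of the mapping torus, yields a cellular immersion $\psi:H\rightarrow F$ whose mapping torus has fundamental group $\mathcal{G}'$, exactly as in the setup of Theorem~\ref{thm:irreducible is finite height}. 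Finally $\overrightarrow{\height}\left(\psi\right)<\infty$: otherwise Lemma~\ref{lem:infinite height give zer euler nonimmersion} would produce inside the mapping torus of $\psi$ an ascending HNN extension of a positive-rank finitely generated free group, hence such a subgroup of $\mathcal{G}'\leq\pi_1X$, contradicting the first paragraph. (Equivalently, by Theorem~\ref{thm:irreducible is finite height} full irreducibility of $\Psi$ is already equivalent to $\overrightarrow{\height}\left(\psi\right)<\infty$.) Thus $\mathcal{G}'$ is the fundamental group of the mapping torus of an immersion of finite directed height, as required.

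The main obstacle lies entirely in the two inputs quoted above. Proving hyperbolicity and local quasiconvexity of $\pi_1X$ from negative immersions of a general compact $2$-complex is open --- presently known only under an auxiliary negative curvature assumption \cite{WiseSectional02} --- and Conjecture~\ref{conj:local quasiconvex characterization} is itself open; its anticipated proof would proceed through virtual specialness of locally quasiconvex (locally indicable) hyperbolic groups, combined with a virtual algebraic fibering argument in the style of \cite{Kielak-Linton23}, but adapted to produce the \emph{partial} mapping torus structure rather than an honest fibration over $\integers$: since the target group $\mathcal{G}'=\mathcal{F}*_{\mathcal{H}^t=\Psi\left(\mathcal{H}\right)}$ has $\chi\left(\mathcal{G}'\right)=\chi\left(\mathcal{F}\right)-\chi\left(\mathcal{H}\right)<0$, it does not fiber over $\integers$ with finitely generated kernel, and one is genuinely forced into the partial (Feighn--Handel, \cite{FeighnHandelCoherence}) setting where the edge group is a proper subgraph of the vertex graph.
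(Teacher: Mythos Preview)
The statement is a conjecture; the paper does not prove it, and you correctly flag this and sketch a program rather than a proof. Your outline and the paper's stated motivation (the paragraph following the three conjectures) are variants of the same plan. The paper's route is: negative immersions should give a quasiconvex hierarchy, hence virtual specialness; then hope that $\pi_1X$ is virtually $F_\infty$-by-$\integers$; then Feighn--Handel \cite{FeighnHandelCoherence} realizes a finite-index subgroup as $\pi_1$ of a partial mapping torus of a graph immersion; finally local quasiconvexity together with Theorem~\ref{thm:main coherence theorem} forces finite directed height. You instead bundle the middle steps into an appeal to Conjecture~\ref{conj:local quasiconvex characterization}, after positing as the key open step that negative immersions of a general compact $2$-complex forces hyperbolicity and local quasiconvexity of $\pi_1X$. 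Since the anticipated attack on Conjecture~\ref{conj:local quasiconvex characterization} is itself virtual specialness plus a (partial) fibering argument, the two routes converge; yours is simply packaged one level of abstraction higher.

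Your unconditional paragraph --- that $\pi_1X$ contains no ascending HNN extension of a nontrivial finitely generated free group --- is correct and in fact slightly more than the paper records: the implication \eqref{pr:negative immersions}$\Rightarrow$\eqref{pr:no ascending hnn ext} in Section~\ref{sec:related properties} is stated only when $X$ is already a partial mapping torus, whereas your Euler-characteristic argument (compact core $C\looparrowright X$ with $\pi_1C\cong P$, then $\chi(C)\leq -c|C|_2<0$ versus $\chi(C)\geq\chi(P)=0$ via Hopf) applies to a general $X$. One caveat: the existence of such a core with $\pi_1C$ exactly equal to $P$ is a feature of the specific tower proof of coherence from negative immersions, not of coherence as a black box, so that step leans on the method behind Theorem~\ref{thm:negative Imm give coherence} rather than on its statement alone.
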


\begin{conje}\label{conje:QC is virtually mapping tori}
If $\mathcal{G}$ is a locally quasiconvex hyperbolic group, then $\mathcal{G}$ has a finite index subgroup that is isomorphic to the fundamental group of a mapping torus of an immersion $\psi:H\rightarrow F$ of finite directed height.
\end{conje}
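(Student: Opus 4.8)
The plan is to follow the program suggested for Conjecture~\ref{conj:local quasiconvex characterization}: pass to the virtually special setting, realize a finite-index subgroup as the fundamental group of a graph of graphs all of whose attaching maps are embeddings, collapse that structure into a single partial mapping torus, and then observe that its directed height is forced to be finite by local quasiconvexity. Since one of the inputs is itself an open conjecture, what follows is a program rather than a proof.

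First one carries out routine reductions. A locally quasiconvex hyperbolic group $\mathcal{G}$ should have $\operatorname{vcd}(\mathcal{G})\leq 2$: a subgroup that is a duality group of dimension $\geq 3$ would be undistorted, hence quasiconvex, hence itself locally quasiconvex and hyperbolic --- which is impossible. After passing to a torsion-free finite-index subgroup, assume $\operatorname{cd}(\mathcal{G})\leq 2$; if $\operatorname{cd}(\mathcal{G})\leq 1$ then $\mathcal{G}$ is free and is $\pi_1$ of a graph (the case $H=\emptyset$, a forest), so assume $\operatorname{cd}(\mathcal{G})=2$. Splitting off free factors by Grushko --- each remaining locally quasiconvex, hyperbolic, of $\operatorname{cd}\leq 2$, and whose mapping-torus realizations wedge together into a single one --- we may also assume $\mathcal{G}$ is one-ended.

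Next, granting the conjecture that locally quasiconvex hyperbolic groups are virtually special, pass to a finite-index subgroup $\mathcal{G}_1$ that is the fundamental group of a compact special square complex (of dimension $2$, since $\operatorname{cd}=2$). By the structure theory of special cube complexes \cite{WiseCBMS2012}, a further finite-index subgroup $\mathcal{G}_2\leq\mathcal{G}_1$ is $\pi_1 Q$ for a compact graph of graphs $Q\to\Gamma$ with finite underlying graph $\Gamma$, finite-graph vertex- and edge-spaces, and every attaching map an embedding. Then each edge space is an embedded subgraph of each incident vertex space, so its fundamental group is a free factor of each adjacent vertex group; thus $\mathcal{G}_2$ is a finite graph of finitely generated free groups with free-factor edge groups. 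Collapse a maximal tree $\Upsilon\subset\Gamma$: processing $\Upsilon$ outward from a root and repeatedly using that $A*_C B = M * B$ whenever $A = C * M$, the vertex group of the collapsed graph of groups is a free group $A=\pi_1 F$, and each remaining loop edge group $C_i$ (resp.\ $C_i'$) lands as a free factor of $A$ --- being a free factor of a vertex group, which is a free factor of $A$. Realizing $C_i=\pi_1 H_i$ with $H_i\subseteq F$ an embedded subgraph, putting $H=\bigsqcup_i H_i$, and letting $\psi\colon H\to F$ be, on each $H_i$, the embedded incoming attaching map of loop $i$, we obtain $\mathcal{G}_2\cong\pi_1 M(\psi)$ where $\psi$ is a combinatorial immersion (indeed an embedding on each component of $H$). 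Finally, since $\mathcal{G}_2$ is locally quasiconvex its finitely generated subgroup $\pi_1 H$ is quasiconvex, hence of finite height in $\mathcal{G}_2$ by \cite{GMRS98}; so Property~\eqref{pr:base is finite height} holds, hence $M(\psi)$ has negative immersions by Theorem~\ref{thm:main coherence theorem}, hence $\overrightarrow{\height}(\psi)<\infty$ by Theorem~\ref{thm:main coherence theorem nonimmersion}. Thus $\mathcal{G}_2$ is the desired finite-index subgroup.

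The main obstacle is the virtual specialness input: that locally quasiconvex hyperbolic groups are virtually special is an open conjecture, and one must also know --- as is expected from the structure theory of special square complexes --- that such a group virtually acts so that the associated graph of graphs has all attaching maps embedded. A secondary difficulty is the bookkeeping in the collapsing step, namely arranging that every loop edge group becomes a genuine free factor of the free vertex group; here local quasiconvexity is crucial, as it also rules out any ascending HNN factor (whose base would be a distorted, hence non-quasiconvex, finitely generated subgroup). Alternatively, the statement follows from Conjecture~\ref{conje:negative immersions are virtually mapping tori} together with the (expected) fact that every locally quasiconvex hyperbolic group is virtually the fundamental group of a $2$-complex with negative immersions.
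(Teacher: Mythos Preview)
This statement is a \emph{conjecture}; the paper offers no proof, only the remark that it ``is motivated by the lack of counter examples.'' You rightly frame your attempt as a program contingent on open problems. The paper's own suggested route (see the discussion after Conjecture~\ref{conj:local quasiconvex characterization}) is somewhat different from yours: virtual specialness combined with virtual fibering via vanishing $L^2$-Betti numbers, rather than your path through a VH graph-of-graphs structure and tree-collapsing.

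Beyond the acknowledged reliance on virtual specialness, your program has further unjustified steps. The $\operatorname{vcd}\leq 2$ reduction is circular as written: you assert that a locally quasiconvex hyperbolic duality group of dimension $\geq 3$ is ``impossible,'' but that is exactly what would need to be proved and is not known in general. The claim that a compact special square complex virtually yields a graph of graphs with \emph{all attaching maps embedded} is also not a consequence of the cited structure theory; specialness gives a VH graph-of-graphs decomposition, but embeddedness of the edge maps is an additional ``clean'' hypothesis, not an automatic one. Finally, your output is a multi-loop HNN extension (one loop per edge of $\Gamma\setminus\Upsilon$) rather than the single partial mapping torus to which Theorems~\ref{thm:main coherence theorem} and~\ref{thm:main coherence theorem nonimmersion} literally apply, so invoking them requires an extension of the paper's results rather than mere bookkeeping. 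These are genuine additional obstacles, consistent with the paper's treatment of the statement as open.
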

The motivation for Conjecture~\ref{conje:NG give QC} is the fact that the negative immersions property is a strengthening of \textit{negative sectional curvature} which was shown to imply local quasiconvexity in  \cite{WiseSectional02}. Conjecture~\ref{conje:negative immersions are virtually mapping tori} is motivated by the fact that negative immersions implies a quasiconvex hierarchy which gives virtual specialness. The hope is to show that $X$ is virtually $F_\infty$-by-cyclic. Using a theorem of Feighn-Handel \cite{FeighnHandelCoherence}, we can then show that $\pi_1X$ is isomorphic to a partial mapping torus of a graph immersion. Local quasiconvexity and our theorem show that it must have finite directed height. Conjecture~\ref{conje:QC is virtually mapping tori} is motivated by the lack of counter examples. That is, there are currently no examples of locally quasiconvex hyperbolic groups that don't have negative immersions. It is exciting to believe that there is a characterization using the partial HNN framework.

\bibliographystyle{alpha}
\bibliography{wise.bib}

\end{document}